\documentclass[12pt]{amsart}
\usepackage{amsmath,amssymb}
\usepackage{bm}
\usepackage{xcolor}
\usepackage{mathrsfs}
\usepackage{verbatim}
\usepackage{float}
\usepackage{setspace}
\usepackage{amsthm}
\usepackage{geometry}
\usepackage{color}
\usepackage{cite}
\newtheorem{theorem}{Theorem}[section]
\newtheorem{lemma}{Lemma}[section]
\newtheorem{remark}{Remark}[section]

  \makeatletter
  \@addtoreset{equation}{section}
  \makeatother

\begin{document}

\title[Discrete Gradgrad-Complexes in Three Dimensions]{Conforming Discrete Gradgrad-Complexes in Three Dimensions}

\author {Jun Hu}
\address{LMAM and School of Mathematical Sciences, Peking University,
  Beijing 100871, P. R. China. }
\email{  hujun@math.pku.edu.cn}

\author {Yizhou Liang}
\address{LMAM and School of Mathematical Sciences, Peking University,
  Beijing 100871, P. R. China. }
\email{ lyz2015@pku.edu.cn }

\thanks{The authors were supported by  NSFC
	projects 11625101 and 11421101.\\
The second author was supported by The Elite Program of Computational and Applied Mathematics for PhD Candidates in Peking University.}
\begin{abstract}
	In this paper,\ the first family of conforming discrete three dimensional Gradgrad-complexes consisting of finite element spaces is constructed.\ These discrete complexes are exact in the sense that the range of each discrete map is the kernel space of the succeeding one.\ These spaces can be used in the mixed form of the linearized Einstein-Bianchi system.\ 
\end{abstract}
\subjclass[2010]{65N30}
\maketitle
\section{Introduction}
Einstein's equations are a set of ten coupled and non-linear equations that relate to the Einstein tensor $G_{ab}$ and the energy-momentum tensor $T_{ab}$
\begin{equation*}
G_{ab} = \frac{8\pi G}{c^4}T_{ab},
\end{equation*}
where $G$ is the gravitational constant and $c$ is the speed of light.\ Because of the complexity,\ these equations can only be solved analytically in some special cases.\ A new approach to solve the Einstein's equations is based on the Einstein-Bianchi formulation\cite{MR1634577,MR1397127},\ using the full Bianchi identities and a decomposition of the Riemann tensor $R_{abcd}$
\begin{equation*}
R_{abcd} = M_{abcd} + W_{abcd},
\end{equation*}
with the so called Weyl tensor $W_{abcd}$ and another part $M_{abcd}$ depend on the Ricci tensor $R_{ab}$.\ In the case of vacuum,\ the Ricci tensor $R_{ab}$ is zero,\ which implies that $M_{abcd}$ is zero.\ A hyperbolic system for vacuum Einstein's equations is based on the Weyl tensor,\ using the Bel(see\cite{MR1634577,MR1397127}) electric and magnetic fields.
There,\ the so called linearized Einstein-Bianchi system from \cite{MR3450102}
\begin{equation*}
\begin{aligned}
&\dot{\mathbf{E}} + \operatorname{curl}\mathbf{B} = 0,\quad \operatorname{div}\mathbf{E}=0,\\
&\dot{\mathbf{B}} - \operatorname{curl}\mathbf{E} = 0,\quad \operatorname{div}\mathbf{B}=0,
\end{aligned}
\end{equation*}
is similar to Maxwell's equations but with symmetric and traceless tensor fields $\mathbf{E}$ and $\mathbf{B}$,\ respectively.\ Because of the essential difference of unknowns,\ classical numerical methods of electromagnetism don't work for this case.\ In \cite{MR3450102}, by introducing a new variable $\sigma(t) = \int_0^{t}\operatorname{div}\operatorname{div}\mathbf{E}\mathrm{d}s$,\ the linearized Einstein-Bianchi system can be realized as a Hodge wave equation
\begin{equation}\label{Intro 1}
\begin{aligned}
&\dot{\sigma} = \operatorname{div}\operatorname{div}\mathbf{E},\\
&\dot{\mathbf{E}} =-\operatorname{grad}\operatorname{grad}\sigma- \operatorname{sym}\operatorname{curl}\mathbf{B} ,\\
&\dot{\mathbf{B}} = \operatorname{curl}\mathbf{E} .
\end{aligned}
\end{equation}
Given initial conditions $\sigma(0),\mathbf{E}(0)$ and $\mathbf{B}(0)$,\ and with appropriate boundary conditions,\ the equation \eqref{Intro 1} is well-posed(see \cite{MR3450102}).

In the weak form of the new formulation of the linearized Einstein-Bianchi system(see \eqref{eq5.0},\eqref{eq5.1} below),\ the symmetry of the electric tensor field $\mathbf{E}$ is imposed strongly,\ namely the electric tensor field is sought in $C^0([0,T],H(\operatorname{curl},\Omega;\mathbb{S}))$,\ taking values in the space $\mathbb{S}:= \mathbb{R}^{3\times 3}_{\operatorname{sym}}$ of symmetric matrices.\ The construction of an appropriate finite element subspace of $H(\operatorname{curl},\Omega;\mathbb{S})$ using polynomial shape functions is a new challenging problem.\ As far as we know,\ there are no mixed finite elements for the Hodge wave equation \eqref{Intro 1} with symmetry of the electric part imposed strongly up until recently.\ In \cite{MR3450102},\ the mixed finite elements for the linearized Einstein-Bianchi system in which the symmetry is imposed weakly were proposed.

The mixed finite elements for the linearized Einstein-Bianchi system is closely related to the discretization of an associated differential complex.\ Such a Gradgrad-complex,\ introduced in \cite{MR4113080} to derive a Helmholtz-like decomposition for biharmonic problems in $\mathbb{R}^3$,\ is given by
\begin{equation}\label{Complex1}
P_1(\Omega)\stackrel{\subset}{\longrightarrow}H^2(\Omega;\mathbb{R}) \stackrel{\operatorname{gradgrad}}{\longrightarrow}H(\operatorname{curl},\Omega;\mathbb{S}) \stackrel{\operatorname{curl}}{\longrightarrow}H(\operatorname{div},\Omega;\mathbb{T})\stackrel{\operatorname{div}}{\longrightarrow}L^2(\Omega;\mathbb{R}^3)\stackrel{}{\longrightarrow}0,
\end{equation}
where the space $H(\operatorname{div},\Omega;\mathbb{T})$ consists of square-integrable tensors with square-integrable divergence,\ taking value in the space $\mathbb{T}$ of traceless matrices.\ The complex is exact provided that the domain $\Omega$ is contractible and Lipschitz\cite{MR4113080},\ that is,\ the range space of each map is the kernel space of the succeeding map.\ In this paper,\ the purpose is to construct conforming finite element spaces $U_{h} \subset H^2(\Omega;\mathbb{R}),\Sigma_{h}\subset H(\operatorname{curl},\Omega;\mathbb{S}),V_{h}\subset H(\operatorname{div},\Omega;\mathbb{T})$ and $Q_{h}\subset L^2(\Omega;\mathbb{R}^{3})$ such that 
\begin{equation}\label{Discrete complex}
P_1(\Omega)\stackrel{\subset}{\longrightarrow}U_{h} \stackrel{\operatorname{gradgrad}}{\longrightarrow}\Sigma_{h} \stackrel{\operatorname{curl}}{\longrightarrow}V_{h}\stackrel{\operatorname{div}}{\longrightarrow}Q_{h}\stackrel{}{\longrightarrow}0
\end{equation}
is an exact sub-complex of \eqref{Complex1}.

A natural point is to take $U_{h}$ to be the $H^2$-conforming finite element spaces introduced by \v{Z}en\'{\i}\v{s}ek\cite{MR350260} and Zhang\cite{MR2474112}.\ These spaces  consist of globally $C^1$ piecewise polynomials of degree 9 and higher,\ those are $C^4$ at vertices and $C^2$ on edges of the triangulation.\ Our constructions of finite elements of $\Sigma_{h}$ are motivated by the mixed finite elements for linear elasticity in\cite{MR3352360,MR3301063,MR3529252}.\ In these elements,\ the symmetric tensor is approximated by a so called $H(\operatorname{curl})$ bubble space enrichment of a $C^0$ space.\ The construction of $V_{h}$ is analogous.\ In order to prove the exactness property below,\ the finite elements $\Sigma_{h},V_{h}$ and associated bubble spaces have to be imposed enhanced regularity at sub-simplexes of the triangulation.\ Due to the complexity and high polynomial degree of the elements,\ their practical significance may be limited.\ However,\ this work is the first construction of conforming discrete Gradgrad-complexes consisting of finite element spaces in $\mathbb{R}^3$,\ and it provides insights to the development of simpler methods.

The rest of the paper is organized as follows.\ Section 2 introduces the notation.\ Section 3 defines the finite element spaces of $H(\operatorname{curl},\Omega;\mathbb{S})$.\ Section 4 states the definition of the finite element spaces of $H(\operatorname{div},\Omega;\mathbb{T})$,\ and proves a result of the divergence of the bubble space.\ Section 5 shows that the complex \eqref{Discrete complex} is exact.\ Section 6 uses the newly proposed elements to solve the mixed form of the linearized Einstein-Bianchi system and shows the error estimate.\ Some conclusions are given in Section 7.

\section{Notation}
Let $\Omega$ be a contractible domain with Lipschitz boundary $\partial \Omega$ of $\mathbb{R}^{3}$.\ Denote by $\mathbb{M}$ the space of\ $3\times 3$\ real matrices,\ and let $\mathbb{S}$ and $\mathbb{T}$ be the subspace of symmetric and traceless matrices,\ respectively.\ For $\sigma : \Omega \rightarrow \mathbb{M}$,\ the symmetric part of $\sigma$ is $\operatorname{sym}\sigma = (\sigma + \sigma^{T})/2$.

Let standard notation $H^{m}(D;X)$ denote the Sobolev space consisting of functions with domain $D\subset \mathbb{R}^{3}$,\ taking values in vector space $X$,\ and with all derivatives of order at most $m$ square-integrable.\ In the case $m=0$,\ set $H^0(D;X) = L^2(D;X)$.\ In this paper,\ $X$ will be either $\mathbb{M},\mathbb{S},\mathbb{T},\mathbb{R}$ or $\mathbb{R}^3$.\ If $X$ is clear in the context,\ $H^{m}(D;X)$ will be simplified as $H^{m}(D)$.\ Denote by $\|\cdot\|_{m,D}$ the norm of $H^{m}(D)$.\ Define
\begin{equation*}
\begin{aligned}
H(\operatorname{curl}, D ; \mathbb{S})=&\left\{\sigma \in L^{2}(D ; \mathbb{S}) \big| \operatorname{curl} \sigma\in L^{2}(D ; \mathbb{M})\right\},\\
H(\operatorname{div}, D ; \mathbb{T})=&\left\{\sigma \in L^{2}(D ; \mathbb{T}) \big| \operatorname{div} \sigma \in L^{2}(D ; \mathbb{R}^3)\right\},\\
\end{aligned}
\end{equation*} 
where the operators curl and div acting on a matrix field are obtained by applying the operators on each row.\ The norms of $H(\operatorname{curl})$ and $H(\operatorname{div})$ are defined by 
\begin{equation*}
\|\sigma\|_{H(\mathrm{curl},D)}^{2}=\|\sigma\|^{2}_{0,D}+\|\operatorname{curl} \sigma\|^{2}_{0,D},\quad \|\sigma\|_{H(\mathrm{div},D)}^{2}=\|\sigma\|^{2}_{0,D}+\|\operatorname{div}\sigma\|^{2}_{0,D}.
\end{equation*}

For $1 \leq p \leq \infty$,\ let $\mathcal{B}$ be a Banach space with norm $\|\cdot\|_{\mathcal{B}}$,\ and $T$ be a positive real number.\ Denote by $L^{p}([0,T],\mathcal{B})$ the space of functions $f:[0,T] \rightarrow \mathcal{B}$ with
\begin{equation*}
\|f\|_{L^{p}(\mathcal{B})}^{p} := \int_{0}^{T}\|f(t)\|_{\mathcal{B}}^{p} \mathrm{d} t < \infty \ (1\leq p < \infty),\quad \|f\|_{L^{\infty}(\mathcal{B})} :=\operatorname{ess} \sup _{t\in [0, T]}\|f(t)\|_{\mathcal{B}} < \infty.
\end{equation*}
Let $m$ be a positive integer.\ Similar to the Sobolev space,\ denote by $W^{m,p}([0,T],\mathcal{B})$ the space such that
\begin{equation*}
\|f\|_{W^{m, p}(\mathcal{B})}^{p} :=\sum_{l=0}^{m}\left\|\partial^{l} f / \partial t^{l}\right\|_{L^{p}(\mathcal{B})}^{p}<\infty.
\end{equation*} 
Similarly,\ let $C^{m}([0,T],\mathcal{B})$ denote the space of m-times continuously differentiable functions.

If $f\in\mathcal{X}\cap\mathcal{B}$,\ here $\mathcal{X}$ and $\mathcal{B}$ are two different Banach spaces,\ then define $\|f\|_{\mathcal{X}\cap\mathcal{B}} = \|f\|_{\mathcal{X}} + \|f\|_{\mathcal{B}}$.

For simplicity of presentation,\ let $\dot{f},\ddot{f},\dddot{f}$ denote $\partial f / \partial t,\partial^{2} f / \partial t^{2},\partial^{3} f / \partial t^{3}$ in the following sections.

Let $\{\mathcal{T}_{h}\}$ denote a family of regular tetrahedral grids on $\Omega$ with the grid size $h$.\ In the following,\ denote by $\mathscr{V}$ the set of vertices,\ $\mathscr{E}$ for edges and $\mathscr{F}$ for faces,\ the set of 3D cells by $\mathscr{T}$.\ Let $\mathcal{V},\mathcal{E},\mathcal{F}$ and $\mathcal{T}$ denote the number of vertices,\ edges,\ faces and tetrahedra in the triangulation,\ respectively.\ Let $P_{k}(D;X)$ denote the space of polynomials of degree $\leq k$ on a single simplex $D\in \mathcal{T}_{h}$,\ taking value in the space $X$.

\section{$H(\operatorname{curl})$ Finite Elements with Strong Symmetry}
This section considers $H(\operatorname{curl})$ conforming finite element spaces with strong symmetry.\ The idea is motivated by the mixed elements for linear elasticity introduced in\cite{MR3352360,MR3301063,MR3529252} where $H(\operatorname{div})$ bubble enriched spaces of Lagrange elements are constructed.\ To this end,\ first introduce a $H(\operatorname{curl})$ bubble function space.
\subsection{Full $\mathbf{H(\operatorname{curl})}$ bubble function space}
Let $\boldsymbol{x}_0,\boldsymbol{x}_1,\boldsymbol{x}_2,\boldsymbol{x}_3$ be the vertices of tetrahedron $K$.\ Denote by $\boldsymbol{f}_{i}$ the face of $K$ opposite to $\boldsymbol{x}_{i}$ ($0\leq i \leq 3$) with $\boldsymbol{n}_{i}$ the unit normal vector of the face $\boldsymbol{f}_{i}$ and $\lambda_{i}$ the i-th barycentric
coordinate.\ Hereafter,\ let $P_{k}^{(i)}(K;\mathbb{R})$ denote the space consisting of Lagrange nodal basis functions of degree $\leq k$ at all Lagrange nodal points on $\boldsymbol{f}_{i}$.

Define a $H(\operatorname{curl},K;\mathbb{S})$ bubble function space on $K$ for $k\geq 4$ as follows:
\begin{equation*}
\Sigma_{K,k,b} := \{\sum_{i=0}^3\lambda_{j}\lambda_{l}\lambda_{m}P_{k-3}^{(i)}(K;\mathbb{R})\boldsymbol{n}_{i}\boldsymbol{n}_{i}^{T}\} \oplus \{ \lambda_0\lambda_1\lambda_2\lambda_3P_{k-4}(K;\mathbb{S}) \},
\end{equation*}
where $\{i,j,l,m\}$ is a permutation of $\{0,1,2,3\}$.\ Note that the bubble function space has another equivalent form:
\begin{equation*}
\Sigma_{K,k,b} = \{\sum_{i=0}^{3}\lambda_{j}\lambda_{l}\lambda_{m}P_{k-3}(K;\mathbb{R})\boldsymbol{n}_{i}\boldsymbol{n}_{i}^{T}\}+ \{ \lambda_0\lambda_1\lambda_2\lambda_3P_{k-4}(K;\mathbb{S})\}.
\end{equation*}

Given a matrix $M = [m_1,m_2,m_3]^{T}$ and a vector $v$,\ define the vector product of matrix and vector by
\begin{equation*}
M\times v = [m_1\times v,m_2\times v,m_3\times v]^{T}.
\end{equation*}
Then the full $H(\operatorname{curl},K;\mathbb{S})$ bubble function space consisting of polynomials of degree $\leq k$ is defined by
\begin{equation*}
\Sigma_{\partial K, k, 0} :=\left\{\sigma \in P_{k}(K ; \mathbb{S}): \sigma \times \boldsymbol{n}\big|_{\partial K}=0\right\},
\end{equation*}
here $\boldsymbol{n}$ is the unit normal vector of $\partial K$.

The following theorem is crucial.
\begin{theorem}
	When $k\geq 4$,\ it holds that
	\begin{equation*}
	\Sigma_{K,k,b} =\Sigma_{\partial K, k, 0}.
	\end{equation*}
\end{theorem}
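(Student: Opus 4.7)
The plan is to prove the two containments separately; the direction $\Sigma_{K,k,b}\subseteq \Sigma_{\partial K,k,0}$ is straightforward, while the reverse is established by an explicit face-by-face construction. For the easy direction, each summand $\lambda_{j}\lambda_{l}\lambda_{m}\,p\,\boldsymbol{n}_{i}\boldsymbol{n}_{i}^{T}$ with $p\in P_{k-3}^{(i)}(K;\mathbb{R})$ vanishes on every face $\boldsymbol{f}_{r}$ with $r\neq i$ because $\lambda_{r}$ appears in the prefactor, and on $\boldsymbol{f}_{i}$ each row of $\boldsymbol{n}_{i}\boldsymbol{n}_{i}^{T}$ is a scalar multiple of $\boldsymbol{n}_{i}^{T}$, so crossing with $\boldsymbol{n}_{i}$ kills it. The quartic-bubble summand $\lambda_{0}\lambda_{1}\lambda_{2}\lambda_{3}P_{k-4}(K;\mathbb{S})$ vanishes on all of $\partial K$, so both pieces lie in $\Sigma_{\partial K,k,0}$.

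For the reverse inclusion, I would first describe the trace of an arbitrary $\sigma\in \Sigma_{\partial K,k,0}$ on each face. The condition $\sigma\times\boldsymbol{n}_{r}\big|_{\boldsymbol{f}_{r}}=0$ says every row of $\sigma\big|_{\boldsymbol{f}_{r}}$ is a scalar multiple of $\boldsymbol{n}_{r}$; combined with symmetry this forces $\sigma\big|_{\boldsymbol{f}_{r}}=\alpha_{r}\,\boldsymbol{n}_{r}\boldsymbol{n}_{r}^{T}$ for a unique scalar polynomial $\alpha_{r}$ of degree $\leq k$ on $\boldsymbol{f}_{r}$. Next, on any edge $\boldsymbol{e}=\boldsymbol{f}_{r}\cap\boldsymbol{f}_{r'}$ one has $\alpha_{r}\boldsymbol{n}_{r}\boldsymbol{n}_{r}^{T}=\sigma\big|_{\boldsymbol{e}}=\alpha_{r'}\boldsymbol{n}_{r'}\boldsymbol{n}_{r'}^{T}$; since the rank-one matrices $\boldsymbol{n}_{r}\boldsymbol{n}_{r}^{T}$ and $\boldsymbol{n}_{r'}\boldsymbol{n}_{r'}^{T}$ are linearly independent (the outward normals of two distinct faces of a nondegenerate tetrahedron are not parallel), both $\alpha_{r}$ and $\alpha_{r'}$ vanish on $\boldsymbol{e}$. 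Hence each $\alpha_{i}$ vanishes on the three edges of $\boldsymbol{f}_{i}$, which inside the face are cut out by $\lambda_{j}$, $\lambda_{l}$, $\lambda_{m}$, so iterated polynomial divisibility gives $\alpha_{i}=\lambda_{j}\lambda_{l}\lambda_{m}q_{i}$ on $\boldsymbol{f}_{i}$ for some $q_{i}\in P_{k-3}(\boldsymbol{f}_{i};\mathbb{R})$.

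To finish, I would lift each $q_{i}$ to the unique $p_{i}\in P_{k-3}^{(i)}(K;\mathbb{R})$ with the same Lagrange nodal values on $\boldsymbol{f}_{i}$ (this lift is well defined by the unisolvence of Lagrange interpolation on the face), and set $\sigma^{(i)}:=\lambda_{j}\lambda_{l}\lambda_{m}\,p_{i}\,\boldsymbol{n}_{i}\boldsymbol{n}_{i}^{T}$. By construction $\sigma^{(i)}\big|_{\boldsymbol{f}_{i}}=\sigma\big|_{\boldsymbol{f}_{i}}$, while for $r\neq i$ the factor $\lambda_{r}$ forces $\sigma^{(i)}\big|_{\boldsymbol{f}_{r}}=0$. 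Therefore $\sigma-\sum_{i=0}^{3}\sigma^{(i)}\in P_{k}(K;\mathbb{S})$ vanishes on every face of $\partial K$, and the standard divisibility lemma for polynomials vanishing on an affine hyperplane (applied four times) yields a factor $\lambda_{0}\lambda_{1}\lambda_{2}\lambda_{3}$ and a remainder $\mu\in P_{k-4}(K;\mathbb{S})$, using $k\geq 4$. This exhibits $\sigma$ as an element of $\Sigma_{K,k,b}$.

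The main obstacle I expect is the edge-coupling step: the passage from the face-wise condition $\sigma\times\boldsymbol{n}=0$ on $\partial K$ to the vanishing of each scalar $\alpha_{i}$ on the boundary $\partial\boldsymbol{f}_{i}$. Everything downstream—divisibility by $\lambda_{j}\lambda_{l}\lambda_{m}$ on the face, Lagrange-nodal extension to $K$, and the final factorization by $\lambda_{0}\lambda_{1}\lambda_{2}\lambda_{3}$—is routine once that rigidity on edges is secured. It is precisely the rank-one structure $\boldsymbol{n}_{r}\boldsymbol{n}_{r}^{T}$ of the face traces, forced by the combination of symmetry and the tangential-trace condition, that couples the four face data and makes the decomposition in the theorem possible.
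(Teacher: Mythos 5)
Your proof is correct, but it follows a genuinely different route from the paper's. The paper invokes Lemma 3.1 to expand an arbitrary $\tau\in\Sigma_{\partial K,k,0}$ globally on $K$ in the matrix basis $\{\boldsymbol{n}_i\boldsymbol{n}_i^{T}\}_{i=0}^{3}\cup\{\boldsymbol{n}_1\boldsymbol{n}_2^{T}+\boldsymbol{n}_2\boldsymbol{n}_1^{T},\ \boldsymbol{n}_1\boldsymbol{n}_3^{T}+\boldsymbol{n}_3\boldsymbol{n}_1^{T}\}$ with six scalar polynomial coefficients $u_0,\dots,u_3,v,w$, and then runs a face-by-face case analysis (using $\boldsymbol{n}_0=a\boldsymbol{n}_1+b\boldsymbol{n}_2+c\boldsymbol{n}_3$ and the pairwise linear independence of the cross products $\boldsymbol{n}_i\times\boldsymbol{n}_r$) to show each $u_i$ acquires the factor $\lambda_j\lambda_l\lambda_m$ and $v,w$ acquire $\lambda_0\lambda_1\lambda_2\lambda_3$. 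You instead work with the face traces directly: symmetry plus $\sigma\times\boldsymbol{n}_r=0$ forces the rank-one form $\sigma|_{\boldsymbol{f}_r}=\alpha_r\boldsymbol{n}_r\boldsymbol{n}_r^{T}$, edge compatibility between adjacent faces kills $\alpha_r$ on $\partial\boldsymbol{f}_r$, and explicit face-wise correctors built from the Lagrange lift reduce the problem to the standard four-fold divisibility by $\lambda_0\lambda_1\lambda_2\lambda_3$. Your argument is more geometric and avoids both Lemma 3.1 and the cross-product bookkeeping entirely; its one genuinely new ingredient, the edge-coupling step, is exactly where you flagged the difficulty and it holds because $\boldsymbol{n}_r\boldsymbol{n}_r^{T}$ and $\boldsymbol{n}_{r'}\boldsymbol{n}_{r'}^{T}$ are linearly independent for distinct faces. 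What the paper's global expansion buys in return is reusability: the same coefficient functions $u_i,v,w$ are carried over verbatim into the proof of Theorem 3.2 for the modified bubble space $\Sigma^{*}_{\partial K,k,0}$, where the vertex-derivative conditions are imposed coefficient by coefficient, whereas your trace-based construction would need to be supplemented to track second derivatives at vertices.
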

To prove this theorem,\ the following lemma is needed.

\begin{lemma}
	The matrices $\boldsymbol{n}_0\boldsymbol{n}_0^{T},\boldsymbol{n}_1\boldsymbol{n}_1^{T},\boldsymbol{n}_2\boldsymbol{n}_2^{T},\boldsymbol{n}_3\boldsymbol{n}_3^{T},(\boldsymbol{n}_1\boldsymbol{n}_2^{T} + \boldsymbol{n}_2\boldsymbol{n}_1^{T}),(\boldsymbol{n}_1\boldsymbol{n}_3^{T}+\boldsymbol{n}_3\boldsymbol{n}_1^{T})$ are linearly independent,\ and form a basis of $\mathbb{S}$.
\end{lemma}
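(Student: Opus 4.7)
The plan is to exploit $\dim\mathbb{S}=6$: since exactly six matrices are listed, linear independence automatically implies they form a basis, so it suffices to assume
\begin{equation*}
M := \sum_{k=0}^{3}a_{k}\boldsymbol{n}_{k}\boldsymbol{n}_{k}^{T}+b_{12}(\boldsymbol{n}_{1}\boldsymbol{n}_{2}^{T}+\boldsymbol{n}_{2}\boldsymbol{n}_{1}^{T})+b_{13}(\boldsymbol{n}_{1}\boldsymbol{n}_{3}^{T}+\boldsymbol{n}_{3}\boldsymbol{n}_{1}^{T})=0
\end{equation*}
and to deduce that all six scalar coefficients vanish.

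The geometric fact I would rely on is that for a nondegenerate tetrahedron $K$ any three of the four outer face normals are linearly independent in $\mathbb{R}^{3}$. Indeed, if $\{i,j,k,l\}=\{0,1,2,3\}$, the three normals $\boldsymbol{n}_{i},\boldsymbol{n}_{j},\boldsymbol{n}_{k}$ are, up to nonzero scaling, the dual basis to the edge vectors $\boldsymbol{x}_{i}-\boldsymbol{x}_{l},\boldsymbol{x}_{j}-\boldsymbol{x}_{l},\boldsymbol{x}_{k}-\boldsymbol{x}_{l}$ at the common vertex $\boldsymbol{x}_{l}$, since $\boldsymbol{n}_{p}\cdot(\boldsymbol{x}_{q}-\boldsymbol{x}_{l})=0$ whenever $q\in\{i,j,k\}\setminus\{p\}$ and those edge vectors are independent by nondegeneracy. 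Consequently $\boldsymbol{n}_{1},\boldsymbol{n}_{2},\boldsymbol{n}_{3}$ form a basis of $\mathbb{R}^{3}$, so I would introduce its dual basis $\boldsymbol{m}_{1},\boldsymbol{m}_{2},\boldsymbol{m}_{3}$ with $\boldsymbol{m}_{i}\cdot\boldsymbol{n}_{j}=\delta_{ij}$ and expand $\boldsymbol{n}_{0}=\alpha_{1}\boldsymbol{n}_{1}+\alpha_{2}\boldsymbol{n}_{2}+\alpha_{3}\boldsymbol{n}_{3}$. Applying the same independence observation to each triple $\{\boldsymbol{n}_{0},\boldsymbol{n}_{j},\boldsymbol{n}_{k}\}$ forces $\alpha_{i}\neq 0$ for every $i$.

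To extract the coefficients I would test $M=0$ by computing $\boldsymbol{m}_{i}^{T}M\boldsymbol{m}_{j}$, using $\boldsymbol{m}_{i}^{T}(\boldsymbol{n}_{k}\boldsymbol{n}_{k}^{T})\boldsymbol{m}_{j}=\delta_{ik}\delta_{jk}$ for $k\in\{1,2,3\}$ and $\boldsymbol{m}_{i}^{T}(\boldsymbol{n}_{0}\boldsymbol{n}_{0}^{T})\boldsymbol{m}_{j}=\alpha_{i}\alpha_{j}$. The pair $(i,j)=(2,3)$ kills every term except the $\boldsymbol{n}_{0}\boldsymbol{n}_{0}^{T}$ one and yields $a_{0}\alpha_{2}\alpha_{3}=0$, hence $a_{0}=0$. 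The diagonal pairs $(k,k)$ for $k=1,2,3$ then give $a_{k}=0$, and $(1,2)$ and $(1,3)$ yield $b_{12}=b_{13}=0$.

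The hard part is really only the geometric observation that any three face normals of a nondegenerate tetrahedron are independent; once that is established, the dual-basis test delivers a triangular system solved immediately. The coupling from the $\boldsymbol{n}_{0}\boldsymbol{n}_{0}^{T}$ term, which contributes to every test pair, is the reason one must isolate $a_{0}$ first through an off-diagonal pair on which the remaining five matrices all vanish.
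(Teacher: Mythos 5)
Your proof is correct. It rests on the same two geometric inputs as the paper's: any three of the four face normals are linearly independent, and consequently the expansion $\boldsymbol{n}_0=\alpha_1\boldsymbol{n}_1+\alpha_2\boldsymbol{n}_2+\alpha_3\boldsymbol{n}_3$ has all coefficients nonzero. Where you differ is in the mechanism for turning these facts into the conclusion. The paper starts from the known basis of $\mathbb{S}$ built from $\boldsymbol{n}_1,\boldsymbol{n}_2,\boldsymbol{n}_3$ (the three dyads $\boldsymbol{n}_i\boldsymbol{n}_i^{T}$ and the three symmetrized products), expands $\boldsymbol{n}_0\boldsymbol{n}_0^{T}$ in it, and observes that the coefficient $bc$ of $(\boldsymbol{n}_2\boldsymbol{n}_3^{T}+\boldsymbol{n}_3\boldsymbol{n}_2^{T})$ is nonzero, so that exchanging this element for $\boldsymbol{n}_0\boldsymbol{n}_0^{T}$ preserves the basis property; it is a one-line Steinitz exchange. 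You instead test an arbitrary vanishing combination against the dual basis $\boldsymbol{m}_i$, which is exactly the coefficient-extraction functional for that same basis of $\mathbb{S}$: your pair $(\boldsymbol{m}_2,\boldsymbol{m}_3)$ isolates the $(\boldsymbol{n}_2\boldsymbol{n}_3^{T}+\boldsymbol{n}_3\boldsymbol{n}_2^{T})$-component, which is where the paper's nonzero pivot $bc$ lives, so the two arguments are dual formulations of the same computation. Your version is somewhat longer but more self-contained: you actually prove the independence of any three normals via biorthogonality with the edge vectors at the opposite vertex, and you prove that the $\alpha_i$ are nonzero, whereas the paper asserts both facts without proof ("it is easy to show", "$a,b,c$ are nonzero constants"). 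The paper's exchange argument buys brevity; yours buys a complete, checkable derivation and makes explicit why the off-diagonal pair $(2,3)$ must be used first to decouple $a_0$.
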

\begin{proof}
	First,\ it is easy to show that $\boldsymbol{n}_1\boldsymbol{n}_1^{T},\boldsymbol{n}_2\boldsymbol{n}_2^{T},\boldsymbol{n}_3\boldsymbol{n}_3^{T},(\boldsymbol{n}_1\boldsymbol{n}_2^{T} + \boldsymbol{n}_2\boldsymbol{n}_1^{T}),(\boldsymbol{n}_1\boldsymbol{n}_3^{T}+\boldsymbol{n}_3\boldsymbol{n}_1^{T}),(\boldsymbol{n}_2\boldsymbol{n}_3^{T}+\boldsymbol{n}_3\boldsymbol{n}_2^{T})$ form a basis of $\mathbb{S}$.
	
	Observe that any three vectors of $\boldsymbol{n}_0,\boldsymbol{n}_1,\boldsymbol{n}_2,\boldsymbol{n}_3$ form a basis of $\mathbb{R}^3$,\ namely
	\begin{equation}\label{E1}
	\boldsymbol{n}_0 = a \boldsymbol{n}_1 + b\boldsymbol{n}_2 +c \boldsymbol{n}_3,
	\end{equation}
	where $a,b,c$ are nonzero constants.\ It leads to
	\begin{align*}
	\boldsymbol{n}_0\boldsymbol{n}_0^{T} = &a^2\boldsymbol{n}_1\boldsymbol{n}_1^{T} + b^2\boldsymbol{n}_2\boldsymbol{n}_2^{T} +c^2\boldsymbol{n}_3\boldsymbol{n}_3^{T}  + ab(\boldsymbol{n}_1\boldsymbol{n}_2^{T} + \boldsymbol{n}_2\boldsymbol{n}_1^{T})\\
	&\quad + ac(\boldsymbol{n}_1\boldsymbol{n}_3^{T} + \boldsymbol{n}_3\boldsymbol{n}_1^{T}) + bc(\boldsymbol{n}_2\boldsymbol{n}_3^{T} + \boldsymbol{n}_3\boldsymbol{n}_2^{T}),
	\end{align*}
	here the coefficient $bc$ is nonzero.\ This completes the proof.
\end{proof}

With the help of the lemma,\ we now turn to the proof of the above theorem.
\begin{proof}[Proof of Theorem 3.1]
	
	Consider a function $\tau \in \lambda_{j}\lambda_{l}\lambda_{m}P_{k-3}(K;\mathbb{R})\boldsymbol{n}_{i}\boldsymbol{n}_{i}^{T}$,\ which vanishes on $\boldsymbol{f}_{j},\boldsymbol{f}_{l},\boldsymbol{f}_{m}$.\ According to the definition of the vector product of matrices and vectors,\ it follows that $(\boldsymbol{n}_{i}\boldsymbol{n}_{i}^{T})\times \boldsymbol{n}_{i} = \boldsymbol{n}_{i}(\boldsymbol{n}_{i}\times \boldsymbol{n}_{i})^{T}$ vanishes,\ which implies that $\tau \times \boldsymbol{n}=0$ on $\partial K$.\ Since $\lambda_0\lambda_1\lambda_2\lambda_3P_{k-4}(K;\mathbb{S}) \subset \Sigma_{\partial K,k,0}$,\ it holds that
	\begin{equation*}
	\Sigma_{\partial K,k,0}  \supset \Sigma_{K,k,b}.
	\end{equation*}
	
	It remains to show the converse.\ Given $\tau \in \Sigma_{\partial K,k,0}$,\ it follows from Lemma 3.1 that
	\begin{equation*}
	\tau = \sum_{i=0}^{3}u_{i}\boldsymbol{n}_{i}\boldsymbol{n}_{i}^{T} + v(\boldsymbol{n}_1\boldsymbol{n}_2^{T} + \boldsymbol{n}_2\boldsymbol{n}_1^{T}) + w(\boldsymbol{n}_1\boldsymbol{n}_3^{T}+\boldsymbol{n}_3\boldsymbol{n}_1^{T}),
	\end{equation*}
	where the coefficient functions $u_0,u_1,u_2,u_3,v,w\in P_{k}(K;\mathbb{R})$.\ By the definition on face $\boldsymbol{f}_{i}$, it follows that $\tau\times\boldsymbol{n}_{i} = 0$.\ There are four cases for the proof.
	\begin{itemize}
		\item[1.] On face $\boldsymbol{f}_0$,\ the vector product of $\tau$ and $\boldsymbol{n}_0$ reads
		\begin{align*}
		\tau\times\boldsymbol{n}_0 = &\sum_{i= 1}^{3}u_{i}\boldsymbol{n}_{i}(\boldsymbol{n}_{i}\times\boldsymbol{n}_0)^{T}+ v(\boldsymbol{n}_1(\boldsymbol{n}_2\times\boldsymbol{n}_0)^{T} +\boldsymbol{n}_2(\boldsymbol{n}_1\times\boldsymbol{n}_0)^{T}) \\ &\quad+ w(\boldsymbol{n}_1(\boldsymbol{n}_3\times\boldsymbol{n}_0)^{T}+\boldsymbol{n}_3(\boldsymbol{n}_1\times\boldsymbol{n}_0)^{T}) = 0 .
		\end{align*}
		It can be reformulated as
		\begin{align*}
		\boldsymbol{n}_1(u_1(\boldsymbol{n}_1\times\boldsymbol{n}_0)^{T} + &v(\boldsymbol{n}_2\times\boldsymbol{n}_0)^{T} +w(\boldsymbol{n}_3\times\boldsymbol{n}_0)^{T}) +\boldsymbol{n}_2(u_2(\boldsymbol{n}_2\times\boldsymbol{n}_0)^{T} +\\ v(\boldsymbol{n}_1\times\boldsymbol{n}_0)^{T}) 
		&+\boldsymbol{n}_3(u_3(\boldsymbol{n}_3\times\boldsymbol{n}_0)^{T} + w(\boldsymbol{n}_1\times\boldsymbol{n}_0)^{T}) = 0\quad on \ \boldsymbol{f}_0.
		\end{align*}
		Because $\boldsymbol{n}_1,\boldsymbol{n}_2,\boldsymbol{n}_3$ are linearly independent,\ this implies that
		\begin{align*}
		&u_1(\boldsymbol{n}_1\times\boldsymbol{n}_0)^{T} + v(\boldsymbol{n}_2\times\boldsymbol{n}_0)^{T} +w(\boldsymbol{n}_3\times\boldsymbol{n}_0)^{T} = 0,\\
		&u_2(\boldsymbol{n}_2\times\boldsymbol{n}_0)^{T} + v(\boldsymbol{n}_1\times\boldsymbol{n}_0)^{T} = 0, \qquad \qquad \qquad\qquad  on \ \boldsymbol{f}_0\\
		&u_3(\boldsymbol{n}_3\times\boldsymbol{n}_0)^{T} + w(\boldsymbol{n}_1\times\boldsymbol{n}_0)^{T}= 0.
		\end{align*}
		According  to the fact that any two vectors of $\boldsymbol{n}_1\times\boldsymbol{n}_0,\boldsymbol{n}_2\times\boldsymbol{n}_0,\boldsymbol{n}_3\times\boldsymbol{n}_0$ are linearly independent,\ it follows that $u_1,u_2,u_3,v,w$ vanish on $\boldsymbol{f}_0$.\ Therefore $u_1,u_2,u_3,v,w$ contain a factor $\lambda_0$.
		
		\item[2.]On face $\boldsymbol{f}_1$,\ it follows from \eqref{E1} that
		\begin{align*}
		&\tau\times \boldsymbol{n}_1 = u_{0}\boldsymbol{n}_0((a\boldsymbol{n}_1+b\boldsymbol{n}_2+c\boldsymbol{n}_3)\times\boldsymbol{n}_1)^{T} + u_2\boldsymbol{n}_2(\boldsymbol{n}_2\times\boldsymbol{n}_1)^{T} +\\ &u_3\boldsymbol{n}_3(\boldsymbol{n}_3\times\boldsymbol{n}_1)^{T}
		 + v\boldsymbol{n}_{1}(\boldsymbol{n}_{2}\times\boldsymbol{n}_1)^{T} + w\boldsymbol{n}_{1}(\boldsymbol{n}_{3}\times\boldsymbol{n}_1)^{T} = 0\quad on \ \boldsymbol{f}_1.
		\end{align*}
		This gives that
		\begin{align*}
				&(u_0b\boldsymbol{n}_0+ u_2\boldsymbol{n}_2 + v\boldsymbol{n}_1)(\boldsymbol{n}_2\times\boldsymbol{n}_1)^{T}  \\
		+ &(u_0c\boldsymbol{n}_0 + u_3\boldsymbol{n}_3 + w\boldsymbol{n}_1)(\boldsymbol{n}_3\times\boldsymbol{n}_1)^{T} =0 \quad on \ \boldsymbol{f}_1.
		\end{align*}
		Note that $\boldsymbol{n}_2\times \boldsymbol{n}_1,\boldsymbol{n}_3\times \boldsymbol{n}_1$ are linearly independent.\ As a result,
		\begin{align*}
		&u_0b\boldsymbol{n}_0+ u_2\boldsymbol{n}_2 + v\boldsymbol{n}_1 = 0,\\
		&u_0c\boldsymbol{n}_0 + u_3\boldsymbol{n}_3 + w\boldsymbol{n}_1 = 0,\quad on \ \boldsymbol{f}_1.
		\end{align*}
		A similar way as shown before shows that $u_0,u_2,u_3,v,w$ vanish on $\boldsymbol{f}_1$.\ Hence $u_0,u_2,u_3,v,w$ contain a factor $\lambda_1$.
		
		\item[3.]The proof of the case on $\boldsymbol{f}_2$ is similar as above.\ Indeed
		\begin{align*}
		\tau\times\boldsymbol{n}_2&= (au_0\boldsymbol{n}_0 + u_1\boldsymbol{n}_1 +v\boldsymbol{n}_2 + w\boldsymbol{n}_3)(\boldsymbol{n}_1\times\boldsymbol{n}_2)^{T}\\
		& + (cu_0\boldsymbol{n}_0 +u_3\boldsymbol{n}_3 + w\boldsymbol{n}_1)(\boldsymbol{n}_3\times\boldsymbol{n}_2)^{T} = 0\quad on\  \boldsymbol{f}_2.
		\end{align*}
		Therefore,\ a similar argument as before,\ concludes that $u_0,u_1,u_3,v,w$ contain a factor $\lambda_2$.
		
		\item[4.]
		A similar argument on $\boldsymbol{f}_3$ shows that $u_0,u_1,u_2,v,w$ contain a factor $\lambda_3$.
	\end{itemize}
	
	A summary of the above arguments implies that $u_{i}$ contains a factor $\lambda_{j}\lambda_{l}\lambda_{m}$,\ where $\{i,j,l,m\}$ is a permutation of $\{0,1,2,3\}$,\ and $v,w$ contain a factor $\lambda_0\lambda_1\lambda_2\lambda_3$.\ It proves $\tau \in \Sigma_{K,k,b}$,
	which completes the proof.
\end{proof}
\begin{remark}
	It follows from the definition of the bubble function space that,\ if $\sigma\in \Sigma_{\partial K, k, 0}$ for any $k\geq 4$,\ then $D^{\alpha}\sigma(\boldsymbol{x}_{i}) = 0(0\leq i \leq 3)$ for $|\alpha|\leq 1$.
\end{remark}

\subsection{The bubble function space with additional conditions}
For the purpose in the next sections,\ the bubble function space with additional conditions is needed.\ The modified bubble function space consisting of polynomials of degree $\leq k$ is defined by
\begin{equation*}
\Sigma_{\partial K,k,0}^{*} := \{\sigma \in \Sigma_{\partial K,k,0}:D^{\alpha}\sigma({\boldsymbol{x}_{i}}) =0,0 \leq i \leq 3,\forall|\alpha| \leq 2\}.
\end{equation*}

In order to establish a similar theorem as Theorem 3.1,\ define the modified auxiliary space 
\begin{equation*}
P_{k}^{(i,0)}(K;\mathbb{R}) := \{p\in P_{k}^{(i)}(K;\mathbb{R}):p \text{ vanishes at the vertices of }\boldsymbol{f}_{i} \}.
\end{equation*}
Then for $k\geq 4$,\ the bubble function space with additional conditions on $K$ is defined by
\begin{equation}\label{def2.2.1}
\Sigma_{K,k,b}^{*} := \{\sum_{i=0}^{3}\lambda_{j}\lambda_{l}\lambda_{m}P_{k-3}^{(i,0)}(K;\mathbb{R})\boldsymbol{n}_{i}\boldsymbol{n}_{i}^{T}\} \oplus \{ \lambda_0\lambda_1\lambda_2\lambda_3P_{k-4}(K;\mathbb{S}) \}.
\end{equation}
\begin{theorem}
	For $k \geq 4$,\ it holds that
	\begin{equation}\label{Th2.2}
	\Sigma_{\partial K,k,0}^{*} = \Sigma_{K,k,b}^{*}.
	\end{equation}
\end{theorem}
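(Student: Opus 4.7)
The plan is to prove $\Sigma_{\partial K,k,0}^{*} = \Sigma_{K,k,b}^{*}$ as two inclusions, using Theorem 3.1 as the main input. Since $\Sigma_{K,k,b}^{*}\subset\Sigma_{K,k,b}=\Sigma_{\partial K,k,0}$ and Remark 3.1 already grants vanishing of first derivatives at the vertices on all of $\Sigma_{\partial K,k,0}$, the only new content is the second-order vanishing $D^2\sigma(\boldsymbol{x}_i)=0$ at the four vertices, and the whole argument will pivot on translating that condition into pointwise relations on the coefficients of the unique decomposition supplied by Theorem 3.1.

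For the direction $\Sigma_{K,k,b}^{*}\subset\Sigma_{\partial K,k,0}^{*}$, I would inspect each generator and show the scalar factor vanishes to order at least $3$ at every vertex. For $\lambda_j\lambda_l\lambda_m p\,\boldsymbol{n}_i\boldsymbol{n}_i^{T}$ with $p\in P_{k-3}^{(i,0)}(K;\mathbb{R})$: at $\boldsymbol{x}_i$ all three barycentric factors vanish, while at each of $\boldsymbol{x}_j,\boldsymbol{x}_l,\boldsymbol{x}_m$ two of the $\lambda$'s vanish to first order and a third vanishing is supplied by the definition of $P_{k-3}^{(i,0)}$. The bubble $\lambda_0\lambda_1\lambda_2\lambda_3P_{k-4}(K;\mathbb{S})$ vanishes to order $3$ at every vertex for the same reason. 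All derivatives up to order $2$ therefore vanish at the vertices.

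For the reverse inclusion, take $\sigma\in\Sigma_{\partial K,k,0}^{*}$ and write it uniquely via Theorem 3.1 as
\begin{equation*}
\sigma = \sum_{i=0}^{3}\lambda_{j}\lambda_{l}\lambda_{m}q_i\,\boldsymbol{n}_i\boldsymbol{n}_i^{T} + \lambda_0\lambda_1\lambda_2\lambda_3 R,
\end{equation*}
with $q_i\in P_{k-3}^{(i)}(K;\mathbb{R})$, $R\in P_{k-4}(K;\mathbb{S})$ and $\{i,j,l,m\}=\{0,1,2,3\}$. The goal is to show $q_i(\boldsymbol{x}_s)=0$ whenever $\boldsymbol{x}_s$ is a vertex of $\boldsymbol{f}_i$, placing $q_i$ into $P_{k-3}^{(i,0)}$. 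Fix a vertex $\boldsymbol{x}_r$. The last term and the $i=r$ summand vanish to order $3$ at $\boldsymbol{x}_r$. For each $s\ne r$, write the prefix of the $i=s$ summand as $\lambda_r\lambda_a\lambda_b$ with $\{a,b\}=\{0,1,2,3\}\setminus\{r,s\}$; a Taylor expansion at $\boldsymbol{x}_r$, using $\lambda_r(\boldsymbol{x}_r)=1$ and that $\lambda_a,\lambda_b$ vanish to first order, yields
\begin{equation*}
D^{2}\bigl(\lambda_r\lambda_a\lambda_b q_s\bigr)(\boldsymbol{x}_r)(u,v) = 2\,q_s(\boldsymbol{x}_r)\,\operatorname{sym}(\nabla\lambda_a\otimes\nabla\lambda_b)(u,v),
\end{equation*}
so $D^2\sigma(\boldsymbol{x}_r)=0$ reduces to the tensor identity
\begin{equation*}
\sum_{s\ne r} q_s(\boldsymbol{x}_r)\,\operatorname{sym}(\nabla\lambda_a\otimes\nabla\lambda_b)\otimes\boldsymbol{n}_s\boldsymbol{n}_s^{T}=0.
\end{equation*}

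The step I expect to require the most care is extracting $q_s(\boldsymbol{x}_r)=0$ from this identity. The three matrices $\boldsymbol{n}_s\boldsymbol{n}_s^{T}$ with $s\ne r$ are linearly independent in $\mathbb{S}$, as they sit inside the basis identified in Lemma 3.1; the standard fact that a sum of simple tensors with linearly independent second factors can vanish only when each first factor vanishes then forces $q_s(\boldsymbol{x}_r)\,\operatorname{sym}(\nabla\lambda_a\otimes\nabla\lambda_b)=0$ for each $s\ne r$. Since $\nabla\lambda_a$ and $\nabla\lambda_b$ are nonzero and non-parallel (being nonzero multiples of the distinct unit normals $\boldsymbol{n}_a,\boldsymbol{n}_b$), their symmetric tensor product is nonzero, so $q_s(\boldsymbol{x}_r)=0$. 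Iterating this argument over the four vertices shows every $q_i$ vanishes at every vertex of $\boldsymbol{f}_i$, hence $q_i\in P_{k-3}^{(i,0)}$ and $\sigma\in\Sigma_{K,k,b}^{*}$, completing the proof.
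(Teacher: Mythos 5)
Your proof is correct and follows essentially the same route as the paper: both directions reduce the second-order vanishing conditions at the vertices, via the decomposition of Theorem 3.1 and the linear independence of the matrices $\boldsymbol{n}_i\boldsymbol{n}_i^{T}$ from Lemma 3.1, to the vanishing of each coefficient $q_i$ at the vertices of $\boldsymbol{f}_i$; your identity $D^{2}(\lambda_r\lambda_a\lambda_b q_s)(\boldsymbol{x}_r)=2q_s(\boldsymbol{x}_r)\operatorname{sym}(\nabla\lambda_a\otimes\nabla\lambda_b)$ is exactly the paper's Leibniz-rule computation in \eqref{eq2.2.2} written in tensor form. No gaps.
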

\begin{proof}
	Consider a function $\tau \in \Sigma_{\partial K,k,0}^{*}$,\ which can be expressed as $\tau = \sum_{i=0}^{3}\lambda_{j}\lambda_{l}\lambda_{m}u_{i}\boldsymbol{n}_{i}\boldsymbol{n}_{i}^{T} + \lambda_0\lambda_1\lambda_2\lambda_3\mathbf{S}$ with $u_{i}\in P_{k-3}^{(i)}(K;\mathbb{R})$ and $\mathbf{S}$ a symmetric matrix-valued polynomial of degree $\leq k-4$ with each component,\ says $v$,\ belongs to $P_{k-4}(K;\mathbb{R})$.\ Note that for any function $v\in P_{k-4}(K;\mathbb{R})$,
	\begin{equation}\label{eq2.2.1}
	\begin{split}
	\partial_{i}\partial_{j}(\lambda_0\lambda_1\lambda_2\lambda_3 v) = \partial_{i}\partial_{j}(\lambda_0\lambda_1\lambda_2\lambda_3)v +\partial_{i}(\lambda_0\lambda_1\lambda_2\lambda_3)\partial_{j}v \\ + \partial_{j}(\lambda_0\lambda_1\lambda_2\lambda_3)\partial_{i}v +(\lambda_0\lambda_1\lambda_2\lambda_3) \partial_{i}\partial_{j}v,\quad (1 \leq i,j \leq 3),
	\end{split}
	\end{equation}
	vanishes at all vertices of tetrahedron $K$.\ It implies that $ \sum_{i=0}^{3}\partial_{k}\partial_{t}(\lambda_{j}\lambda_{l}\lambda_{m}u_{i})\boldsymbol{n}_{i}\boldsymbol{n}_{i}^{T},1\leq k,t\leq 3$ vanishes at all the vertices.\ By Lemma 3.1,\ this indicates that $\partial_{k}\partial_{t}(\lambda_{j}\lambda_{l}\lambda_{m}u_{i}) = 0$ for $0\leq i\leq 3$ at all vertices of $K$.\ Similarly,
	\begin{equation}\label{eq2.2.2}
	\begin{split}
	\partial_{k}\partial_{t}(\lambda_{j}\lambda_{l}\lambda_{m} u_{i}) = \partial_{k}\partial_{t}(\lambda_{j}\lambda_{l}\lambda_{m})u_{i} +\partial_{k}(\lambda_{j}\lambda_{l}\lambda_{m})\partial_{t}u_{i}  + \partial_{t}(\lambda_{j}\lambda_{l}\lambda_{m})\partial_{k}u_{i}\\ +(\lambda_{j}\lambda_{l}\lambda_{m}) \partial_{k}\partial_{t}u_{i}\quad (1\leq k,t \leq 3),
	\end{split}
	\end{equation}	 
	where the terms $\partial_{k}(\lambda_{j}\lambda_{l}\lambda_{m}),\partial_{t}(\lambda_{j}\lambda_{l}\lambda_{m})$ and $(\lambda_{j}\lambda_{l}\lambda_{m})$ vanish at all vertices of $K$.\ Since $\partial_{k}\partial_{t}(\lambda_{j}\lambda_{l}\lambda_{m} )$ vanishes at $\boldsymbol{x}_{i}$ and is nonzero at $\boldsymbol{x}_{j},\boldsymbol{x}_{l},\boldsymbol{x}_{m}$,\ which implies that $u_{i}=0$ at $\boldsymbol{x}_{j},\boldsymbol{x}_{l},\boldsymbol{x}_{m}$ and consequently $\tau \in \Sigma_{K,k,b}^{*}$.\ Hence
	\begin{equation*}
	\Sigma_{\partial K,k,0}^{*} \subset \Sigma_{K,k,b}^{*}.
	\end{equation*}
	It remains to show the converse.\ Given $\tau \in \Sigma_{K,k,b}^{*}$,\ it can be expressed as $\tau = \sum_{i=0}^{3}\lambda_{j}\lambda_{l}\lambda_{m}u_{i}\boldsymbol{n}_{i}\boldsymbol{n}_{i}^{T} + \lambda_0\lambda_1\lambda_2\lambda_3  \mathbf{S}$ with $u_{i}\in P_{k-3}^{(i,0)}(K;\mathbb{R})$ and $\mathbf{S}$ defined as above.\ It follows that $D^{\alpha}\tau=0,(|\alpha|=2)$ at all vertices of $K$ from equations \eqref{eq2.2.1},\eqref{eq2.2.2} and a similar argument as above.\ Hence 
	\begin{equation*}
	\tau \in \Sigma_{\partial K,k,0}^{*}.
	\end{equation*}
	This completes the proof.
\end{proof}

Then the $H(\operatorname{curl})$ finite element space with $C^2$ at the vertices is defined by
\begin{equation*}
\begin{split}
\Sigma_{k,h} := \{\sigma \in H(\operatorname{curl},\Omega;\mathbb{S}):\sigma = \sigma_{c} +\sigma_{b},\sigma_{c} \in C^0(\Omega;\mathbb{S}),\\ \sigma_{c}\text{ is }C^2\text{ at } \mathscr{V},
\sigma_{c}\big |_{K}\in P_{k}(K;\mathbb{S}),\sigma_{b}\big |_{K}\in\Sigma_{K,k,b}^{*},\forall K\in \mathcal{T}_{h}\}.
\end{split}
\end{equation*}
Note that this finite element space is a $H(\operatorname{curl})$ bubble enrichment of a $H^1$ space $\widetilde{\Sigma}_{k,h} := \{\tau \in H^1(\Omega;\mathbb{S}),\tau\text{ is }C^2\text{ at } \mathscr{V},\tau\big |_{K}\in P_{k}(K;\mathbb{S}),\forall K\in \mathcal{T}_{h}\}$.

For $k \geq 4$,\ the dimension of the modified bubble function space is
\begin{equation*}
\operatorname{dim} \Sigma_{K,k,b}^{*} = 4[\frac{(k-1)(k-2)}{2} -3] + (k-1)(k-2)(k-3) = k^3-4k^2+5k-14.
\end{equation*}

\subsection{Degrees of freedom}
Assume that $k\geq 5$,\ then the degrees of freedom of the finite element space $\Sigma_{k,h}$ can be given locally as:
\begin{enumerate}
	\item[1.] derivatives of order $\leq 2$ for each component of $\sigma$ at each vertex $\boldsymbol{x}\subset K$:
	\begin{equation*}
	D^{\alpha}\sigma(\boldsymbol{x}) ,\quad \forall |\alpha| \leq 2,
	\end{equation*}
	\item [2.] moments of order $\leq k-6$ for each component of $\sigma$ on each edge $\boldsymbol{e}\subset K$:
	\begin{equation*}
	\int_{\boldsymbol{e}} \sigma : \mathbf{q},\quad \forall \mathbf{q}\in P_{k-6}(\mathbf{e};\mathbb{S}),
	\end{equation*}
	\item [3.]for each face $\boldsymbol{f}_{i} \subset  \partial K(0 \leq i \leq 3)$,\ with the unit normal vector $\boldsymbol{n}_{i}$ and two unit independent tangential vectors $\boldsymbol{\tau}_1,\boldsymbol{\tau}_2$,\ the following moments of $\sigma$: 
	\begin{equation*}
	\int_{\boldsymbol{f}_{i}}\boldsymbol{\tau}_1^{T}\sigma\boldsymbol{\tau}_1 q,\int_{\boldsymbol{f}_{i}}\boldsymbol{\tau}_1^{T}\sigma\boldsymbol{\tau}_2 q,\int_{\boldsymbol{f}_{i}}\boldsymbol{\tau}_2^{T}\sigma\boldsymbol{\tau}_2 q,\int_{\boldsymbol{f}_{i}}\boldsymbol{\tau}_1^{T}\sigma\boldsymbol{n}_{i} q,\int_{\boldsymbol{f}_{i}}\boldsymbol{\tau}_2^{T}\sigma\boldsymbol{n}_{i} q,\quad \forall q\in P_{k-3}^{(i,0)}(\boldsymbol{f}_{i};\mathbb{R})
	\end{equation*}
	\item [4.]the following moments of $\sigma$:
	\begin{equation*}
	\int_{K} \sigma : \mathbf{p},\quad \forall \mathbf{p} \in \Sigma_{K,k,b}^{*}.
	\end{equation*}

\end{enumerate}
In the case $k=5$,\ the second set of degrees of freedom is omitted.
\begin{theorem}
	Any $\sigma \in P_{k}(K;\mathbb{S})$ can be uniquely determined by the local degrees of freedom.

\end{theorem}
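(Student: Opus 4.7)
The plan is to follow the standard two-step argument: verify that the total number of local degrees of freedom equals $\dim P_{k}(K;\mathbb{S})=(k+1)(k+2)(k+3)$, and then show that any $\sigma\in P_{k}(K;\mathbb{S})$ annihilating all of them vanishes identically. A direct count gives $4\cdot 10\cdot 6$ vertex values, $6\cdot 6\cdot (k-5)$ edge moments (absent when $k=5$), $4\cdot 5\cdot\bigl(\tfrac{(k-1)(k-2)}{2}-3\bigr)$ face moments, and $\dim\Sigma_{K,k,b}^{*}=k^{3}-4k^{2}+5k-14$ interior moments; these sum to $k^{3}+6k^{2}+11k+6=(k+1)(k+2)(k+3)$, matching the target dimension exactly.

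For the kernel step I would work outward from vertices to the interior. On each edge $\boldsymbol{e}$, condition (1) forces every scalar component of $\sigma|_{\boldsymbol{e}}$, a polynomial of degree $\leq k$, to vanish with its first two tangential derivatives at both endpoints, so it carries the factor $(s-a)^{3}(b-s)^{3}$; the cofactor lies in $P_{k-6}(\boldsymbol{e};\mathbb{R})$, and using it as test function in (2) produces a strictly positive weighted $L^{2}$-integral that equals zero, hence $\sigma|_{\boldsymbol{e}}=0$. Then on each face $\boldsymbol{f}_{i}$, each scalar component of $\sigma|_{\boldsymbol{f}_{i}}$ is a polynomial of degree $\leq k$ vanishing on the three bounding edges and therefore equals $\lambda_{j}\lambda_{l}\lambda_{m}r$ with $r\in P_{k-3}(\boldsymbol{f}_{i};\mathbb{R})$. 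The Hessian of $\lambda_{j}\lambda_{l}\lambda_{m}$ is non-degenerate at each vertex of $\boldsymbol{f}_{i}$, so the vertex conditions (1) force $r$ to vanish at those three vertices, placing $r\in P_{k-3}^{(i,0)}(\boldsymbol{f}_{i};\mathbb{R})$; choosing this $r$ as test function in (3) gives $\int_{\boldsymbol{f}_{i}}\lambda_{j}\lambda_{l}\lambda_{m}r^{2}=0$, hence $r=0$. This kills the five face components $\boldsymbol{\tau}_{1}^{T}\sigma\boldsymbol{\tau}_{1},\boldsymbol{\tau}_{1}^{T}\sigma\boldsymbol{\tau}_{2},\boldsymbol{\tau}_{2}^{T}\sigma\boldsymbol{\tau}_{2},\boldsymbol{\tau}_{1}^{T}\sigma\boldsymbol{n}_{i},\boldsymbol{\tau}_{2}^{T}\sigma\boldsymbol{n}_{i}$, leaving $\sigma|_{\boldsymbol{f}_{i}}$ in the span of $\boldsymbol{n}_{i}\boldsymbol{n}_{i}^{T}$, which is exactly $\sigma\times\boldsymbol{n}_{i}=0$ on $\boldsymbol{f}_{i}$.

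Collecting these face identities puts $\sigma\in\Sigma_{\partial K,k,0}$, and combined with the vertex conditions (1) it lies in the stronger subspace $\Sigma_{\partial K,k,0}^{*}$. Theorem 3.2 identifies this subspace with $\Sigma_{K,k,b}^{*}$, so condition (4) applied with the admissible test $\mathbf{p}=\sigma$ yields $\int_{K}\sigma:\sigma=0$ and therefore $\sigma\equiv 0$. The step I expect to be the main obstacle is the face argument: the five scalar moments in (3) were chosen precisely so that the leftover sixth component $\boldsymbol{n}_{i}^{T}\sigma\boldsymbol{n}_{i}$ is the one absorbed into the characterization $\sigma\times\boldsymbol{n}_{i}=0$, and the reduction $r\in P_{k-3}^{(i,0)}$ hinges on a careful computation of the vertex Hessians of $\lambda_{j}\lambda_{l}\lambda_{m}$ restricted to $\boldsymbol{f}_{i}$; once that bookkeeping is settled, the edge, interior and dimension-count pieces are all routine.
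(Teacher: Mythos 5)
Your proposal is correct and follows essentially the same route as the paper's proof: the identical dimension count, then vertex $\Rightarrow$ edge $\Rightarrow$ face reduction showing $\sigma\times\boldsymbol{n}=0$ on $\partial K$, membership in $\Sigma_{\partial K,k,0}^{*}=\Sigma_{K,k,b}^{*}$ via Theorem 3.2, and the interior moments to conclude $\sigma\equiv 0$. You merely spell out details the paper leaves implicit (the factor $(s-a)^{3}(b-s)^{3}$ on edges, the Hessian argument placing $r$ in $P_{k-3}^{(i,0)}$, and the identification of the vanishing of the five face moments with $\sigma\times\boldsymbol{n}_{i}=0$), all of which check out.
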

\begin{proof}
	The number of local degrees of freedom is:
	$$
	\begin{aligned} & 6\times 10\times 4 + 6\times 6\times (k-5) + 5\times[\frac{(k-1)(k-2)}{2}-3]\times 4 + k^3-4k^2+5k-14 \\
	=&k^3+6k^2 +11k +6 = \dim P_{k}(K;\mathbb{S}).  
	\end{aligned}
	$$
	
	For any $\sigma\in P_{k}(K;\mathbb{S})$,\ it vanishes at all the degrees of freedom if and only if $\sigma=0$ on element $K$.\ In fact from the first set of degrees of freedom,\ the derivatives of $\sigma$ of order $\leq 2$  vanish at all the vertices,\ which indicates that $\sigma$ vanishes at all the edges from the second set of degrees of freedom.\ Furthermore,\  $\sigma|_{\boldsymbol{f}_{i}} = \lambda_{j}\lambda_{l}\lambda_{m}\mathbf{q}$ for some $\mathbf{q}\in P_{k-3}^{(i,0)}(\boldsymbol{f}_{i};\mathbb{S})$.\ Then the third set of degrees of freedom show that $\sigma\times \boldsymbol{n} = 0$ on $\partial K$,\ hence $\sigma \in\Sigma_{\partial K,k,0}^{*}$.\ The fourth set of degrees of freedom and Theorem 3.2 imply that $\sigma = 0$.\ This completes the proof.
\end{proof}

Similarly,\ the local degrees of freedom of $\widetilde{\Sigma}_{k,h}= \{\tau \in H^1(\Omega;\mathbb{S}),\tau\text{ is }C^2\text{ at } \mathscr{V},\tau\big |_{K}\in P_{k}(K;\mathbb{S}),\forall K\in \mathcal{T}_{h}\}$ with $k \geq 5$ are given as follows:
\begin{itemize}
	\item[1.] derivatives of order $\leq 2$ for each component of $\sigma$ at each vertex $\boldsymbol{x}\subset K$:
	\begin{equation*}
	D^{\alpha}\sigma(\boldsymbol{x}) ,\quad \forall |\alpha| \leq 2,
	\end{equation*}
	\item [2.] moments of order $\leq k-6$ for each component of $\sigma$ on each edge $\boldsymbol{e}\subset  K$:
	\begin{equation*}
	\int_{\boldsymbol{e}} \sigma : \mathbf{q},\quad \forall \mathbf{q}\in P_{k-6}(\mathbf{e};\mathbb{S}),
	\end{equation*}
	\item [3.]for each face $\boldsymbol{f}_{i} \subset  \partial K(0 \leq i \leq 3)$,\ the following moments of $\sigma$: 
	\begin{equation*}
	\int_{\boldsymbol{f}_{i}}\sigma:\mathbf{q},\quad \forall\mathbf{q}\in P_{k-3}^{(i,0)}(\boldsymbol{f}_{i};\mathbb{S}),
	\end{equation*}
	\item [4.] moments of order $\leq k-4$ for each component of $\sigma$ on $K$:
	\begin{equation*}
	\int_{K} \sigma : \mathbf{p},\quad \forall \mathbf{p} \in P_{k-4}(K;\mathbb{S}).
	\end{equation*}
\end{itemize}
Note that the first and second sets of degrees of freedom of $\widetilde{\Sigma}_{k,h}$ are the same as those of $\Sigma_{k,h}$,\ therefore the unisolvence of the local degrees of freedom with respect to the shape function space of $\widetilde{\Sigma}_{k,h}$ can be proved similarly as that for $\Sigma_{k,h}$.\ Denote by $\Sigma_{K}$ the set of the linear functionals of the local degrees of freedom on element $K$.\ The nodal interpolation operator is defined by
\begin{equation*}
E_{k}:C^2(\Omega;\mathbb{S})\rightarrow \widetilde{\Sigma}_{k,h},\quad f(E_{k}\sigma) = f(\sigma),\quad \forall f\in \Sigma_{K}.
\end{equation*}
By a scaling argument,\ it holds that
\begin{equation}\label{eq 2.3.1}
\|\sigma-E_{k}\sigma\|_{1,\Omega}\leq Ch^{k}|\sigma|_{k+1,\Omega},\quad \forall \sigma\in C^2(\Omega;\mathbb{S})\cap H^{k+1}(\Omega;\mathbb{S}).
\end{equation}

\section{$H(\operatorname{div})$ Finite Elements for Traceless Matrices}
This section introduces a family of $H(\operatorname{div})$ finite element spaces for traceless matrices.
\subsection{ $\mathbf{H(\operatorname{div})}$ bubble function space}
The full $H(\operatorname{div},K;\mathbb{T})$ bubble function space consisting of polynomials of degree $\leq k$ is defined by
\begin{equation*}
V_{\partial K,k,0} := \{\mathbf{v}\in P_{k}(K;\mathbb{T}):\mathbf{v}  \boldsymbol{n}|_{\partial K} = 0 \},
\end{equation*}
with the unit normal vector $\boldsymbol{n}$ of $\partial K$.

Let $\boldsymbol{\tau}_{i,1},\boldsymbol{\tau}_{i,2}$ be two independent unit tangential vectors on face $\boldsymbol{f}_{i}$.\ Define the following traceless matrices of rank one:
\begin{equation*}
T_{i,j} := \boldsymbol{n}_{i}\boldsymbol{\tau}_{i,j}^{T},\quad 0\leq i\leq 3,1\leq j \leq 2.
\end{equation*}
\begin{lemma}
	The 8 matrices $T_{i,j}$ form a basis of $\mathbb{T}$.
\end{lemma}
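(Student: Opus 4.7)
The plan is to observe that $\dim \mathbb{T} = 8$, so it suffices to verify that the eight matrices $T_{i,j}$ lie in $\mathbb{T}$ and are linearly independent. Traceless\-ness is immediate: $\operatorname{tr}(T_{i,j}) = \operatorname{tr}(\boldsymbol{n}_{i}\boldsymbol{\tau}_{i,j}^{T}) = \boldsymbol{\tau}_{i,j}^{T}\boldsymbol{n}_{i} = 0$, since $\boldsymbol{\tau}_{i,j}$ is tangent to $\boldsymbol{f}_{i}$.

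For linear independence, I would suppose
\begin{equation*}
\sum_{i=0}^{3}\sum_{j=1}^{2} c_{i,j}\,\boldsymbol{n}_{i}\boldsymbol{\tau}_{i,j}^{T} = 0,
\end{equation*}
and group the tangential parts by setting $\boldsymbol{v}_{i} := c_{i,1}\boldsymbol{\tau}_{i,1} + c_{i,2}\boldsymbol{\tau}_{i,2}$, so that $\boldsymbol{v}_{i}\perp\boldsymbol{n}_{i}$ and $\sum_{i=0}^{3}\boldsymbol{n}_{i}\boldsymbol{v}_{i}^{T}=0$. Testing against an arbitrary vector $\boldsymbol{w}\in\mathbb{R}^{3}$ yields $\sum_{i=0}^{3}(\boldsymbol{v}_{i}^{T}\boldsymbol{w})\,\boldsymbol{n}_{i}=0$.

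Next I would exploit exactly the dependence used in Lemma 3.1, namely $\boldsymbol{n}_{0} = a\boldsymbol{n}_{1}+b\boldsymbol{n}_{2}+c\boldsymbol{n}_{3}$ with $a,b,c$ all nonzero. Substituting and using that $\boldsymbol{n}_{1},\boldsymbol{n}_{2},\boldsymbol{n}_{3}$ are linearly independent produces, for every $\boldsymbol{w}$,
\begin{equation*}
a(\boldsymbol{v}_{0}^{T}\boldsymbol{w}) + \boldsymbol{v}_{1}^{T}\boldsymbol{w} = 0,\quad b(\boldsymbol{v}_{0}^{T}\boldsymbol{w}) + \boldsymbol{v}_{2}^{T}\boldsymbol{w} = 0,\quad c(\boldsymbol{v}_{0}^{T}\boldsymbol{w}) + \boldsymbol{v}_{3}^{T}\boldsymbol{w} = 0,
\end{equation*}
whence $\boldsymbol{v}_{1} = -a\boldsymbol{v}_{0}$, $\boldsymbol{v}_{2} = -b\boldsymbol{v}_{0}$, $\boldsymbol{v}_{3} = -c\boldsymbol{v}_{0}$. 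Combined with the orthogonality $\boldsymbol{v}_{i}\perp\boldsymbol{n}_{i}$ and the non-vanishing of $a,b,c$, this forces $\boldsymbol{v}_{0}$ to be orthogonal to each of $\boldsymbol{n}_{0},\boldsymbol{n}_{1},\boldsymbol{n}_{2},\boldsymbol{n}_{3}$; since any three of these span $\mathbb{R}^{3}$, $\boldsymbol{v}_{0}=0$, and then $\boldsymbol{v}_{1}=\boldsymbol{v}_{2}=\boldsymbol{v}_{3}=0$. Finally, because $\boldsymbol{\tau}_{i,1}$ and $\boldsymbol{\tau}_{i,2}$ are independent on $\boldsymbol{f}_{i}$, $\boldsymbol{v}_{i}=0$ gives $c_{i,1}=c_{i,2}=0$, completing the argument.

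There is no serious obstacle; the only thing to be careful about is keeping track of the tangentiality constraints $\boldsymbol{v}_{i}\perp\boldsymbol{n}_{i}$ simultaneously with the scalar relations coming from the unique linear dependence among the four face normals. That combination is exactly what rules out the possibility of a nontrivial common direction $\boldsymbol{v}_{0}$ surviving.
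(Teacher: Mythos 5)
Your proof is correct. It differs mildly but genuinely from the paper's route. The paper right-multiplies the identity $\sum_{i,j}c_{i,j}\boldsymbol{n}_{i}\boldsymbol{\tau}_{i,j}^{T}=0$ by each face normal $\boldsymbol{n}_{0},\dots,\boldsymbol{n}_{3}$ in turn: contracting with $\boldsymbol{n}_{j}$ annihilates the $i=j$ block because $\boldsymbol{\tau}_{j,l}^{T}\boldsymbol{n}_{j}=0$, and the linear independence of the remaining three normals yields the scalar relations $\widetilde{c}_{i,j}=\boldsymbol{v}_{i}^{T}\boldsymbol{n}_{j}=0$ for $i\neq j$; each $\boldsymbol{v}_{i}$ is then orthogonal to three spanning normals and must vanish. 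You instead contract with an arbitrary vector $\boldsymbol{w}$ and import the dependence relation $\boldsymbol{n}_{0}=a\boldsymbol{n}_{1}+b\boldsymbol{n}_{2}+c\boldsymbol{n}_{3}$ (the paper's equation used in Lemma 3.1 but not in its proof of this lemma) to deduce $\boldsymbol{v}_{i}=-a\boldsymbol{v}_{0}$, etc., and only then invoke the tangentiality constraints. Both arguments ultimately rest on the same two facts --- $\boldsymbol{v}_{i}\perp\boldsymbol{n}_{i}$ and the spanning property of any three of the normals --- but the paper's contraction with specific normals decouples the four blocks immediately and avoids the dependence relation altogether, while your version makes the single relation among the $\boldsymbol{v}_{i}$ explicit, which is arguably more transparent about why exactly one potential degeneracy direction has to be excluded. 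Your tracelessness check $\operatorname{tr}(\boldsymbol{n}_{i}\boldsymbol{\tau}_{i,j}^{T})=\boldsymbol{\tau}_{i,j}^{T}\boldsymbol{n}_{i}=0$ is a small completeness point the paper leaves implicit.
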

\begin{proof}
	It suffices to show that if
	\begin{equation*}
	\sigma = \sum_{i=0}^{3}\sum_{j=1}^{2}c_{i,j}T_{i,j} =0,
	\end{equation*}
	then the constants $c_{i,j}$ are equal to zero.
	
	Note that $\boldsymbol{\tau}_{0,j}^{T}\boldsymbol{n}_0 = 0$,\ this leads to
	\begin{equation*}
	\begin{aligned}
	\sigma  \boldsymbol{n}_0 &= \sum_{i=0}^{3}\sum_{j=1}^{2}c_{i,j}T_{i,j}\boldsymbol{n}_0 = \sum_{i=1}^{3}\sum_{j=1}^{2}c_{i,j}\boldsymbol{n}_{i}\boldsymbol{\tau}_{i,j}^{T}\boldsymbol{n}_0\\
	& = \sum_{i=1}^{3} \widetilde{c}_{i,0}\boldsymbol{n}_{i} = 0,
	\end{aligned}
	\end{equation*}
	where $\widetilde{c}_{i,0}= c_{i,1}\boldsymbol{\tau}_{i,1}^{T}\boldsymbol{n}_{0} + c_{i,2}\boldsymbol{\tau}_{i,2}^{T}\boldsymbol{n}_{0}$.\ Since $\boldsymbol{n}_1,\boldsymbol{n}_2,\boldsymbol{n}_{3}$ are linearly independent,\ it yields $\widetilde{c}_{i,0} = 0$.\ A similar argument leads to
	\begin{equation*}
	\widetilde{c}_{i,j} = c_{i,1}\boldsymbol{\tau}_{i,1}^{T}\boldsymbol{n}_{j} + c_{i,2}\boldsymbol{\tau}_{i,2}^{T}\boldsymbol{n}_{j} =0,\quad 0\leq i\neq j\leq 3.
	\end{equation*}
	Without loss of generality,\ consider the equations for $i=0$
	\begin{equation*}
	\begin{aligned}
	&\widetilde{c}_{0,1} = c_{0,1}\boldsymbol{\tau}_{0,1}^{T}\boldsymbol{n}_{1} + c_{0,2}\boldsymbol{\tau}_{0,2}^{T}\boldsymbol{n}_{1} =0,\\
	&    \widetilde{c}_{0,2} = c_{0,1}\boldsymbol{\tau}_{0,1}^{T}\boldsymbol{n}_{2} + c_{0,2}\boldsymbol{\tau}_{0,2}^{T}\boldsymbol{n}_{2} =0,\\
	&    \widetilde{c}_{0,3} = c_{0,1}\boldsymbol{\tau}_{0,1}^{T}\boldsymbol{n}_{3} + c_{0,2}\boldsymbol{\tau}_{0,2}^{T}\boldsymbol{n}_{3} =0.
	\end{aligned}
	\end{equation*}
	The linear independence of $\boldsymbol{n}_1,\boldsymbol{n}_2,\boldsymbol{n}_3$ gives that $c_{0,1} = c_{0,2}=0$.\ The other constants $c_{i,j}$ are equal to zero by similar arguments.\ This completes the proof.
\end{proof}

Let $\boldsymbol{t}_{i,j}$ denote the tangential vector of edge $\boldsymbol{x}_{i}\boldsymbol{x}_{j}$.\ For $k \geq 3$,\ a $H(\operatorname{div,K;\mathbb{T}})$ bubble function space is defined by
\begin{equation*}
V_{K,k,b} := \sum_{i=0}^{3}\sum_{\substack{0\leq j<l\leq 3\\ j,l\neq i}}\lambda_{j}\lambda_{l}P_{k-2}(K;\mathbb{R})\boldsymbol{n}_{i}\boldsymbol{t}_{j,l}^{T}.
\end{equation*}
The following theorem is similar as Theorem 3.1.
\begin{theorem}
	For $k \geq 3$,\ it holds that
	\begin{equation*}
	V_{\partial K,k,0}= V_{K,k,b}.
	\end{equation*}
\end{theorem}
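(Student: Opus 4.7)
The plan is to mirror Theorem 3.1: establish $V_{K,k,b} \subset V_{\partial K,k,0}$ by term-wise verification, then invert it by combining Lemma 4.1 with a careful extraction of barycentric factors. For the easy inclusion, I pick a generator $\lambda_j\lambda_l p\,\boldsymbol{n}_i\boldsymbol{t}_{j,l}^T$ with $j,l \neq i$ and check $(\cdot)\boldsymbol{n}_s = 0$ on each face $\boldsymbol{f}_s$. When $s \in \{j,l\}$ the factor $\lambda_j\lambda_l$ annihilates it; when $s \notin \{j,l\}$ the edge $\boldsymbol{x}_j\boldsymbol{x}_l$ lies in $\boldsymbol{f}_s$, so $\boldsymbol{t}_{j,l}$ is tangent to $\boldsymbol{f}_s$ and $\boldsymbol{t}_{j,l}^T\boldsymbol{n}_s = 0$.

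For the converse, I take $\tau \in V_{\partial K,k,0}$ and invoke Lemma 4.1 to write $\tau = \sum_{i=0}^3 \boldsymbol{n}_i\mathbf{w}_i^T$ with each $\mathbf{w}_i$ a polynomial vector tangent to $\boldsymbol{f}_i$. The boundary condition $(\tau\boldsymbol{n}_s)|_{\boldsymbol{f}_s} = \sum_{i\neq s}(\mathbf{w}_i^T\boldsymbol{n}_s)|_{\boldsymbol{f}_s}\,\boldsymbol{n}_i = 0$, combined with linear independence of $\{\boldsymbol{n}_i:i\neq s\}$ (as in Lemma 3.1), forces $(\mathbf{w}_i^T\boldsymbol{n}_s)|_{\boldsymbol{f}_s} = 0$ for all $i \neq s$. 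Hence $\mathbf{w}_i|_{\boldsymbol{f}_s}$ is orthogonal to both $\boldsymbol{n}_i$ and $\boldsymbol{n}_s$, and is therefore parallel to the edge tangent $\boldsymbol{t}_{j,l}$ with $\{j,l\}=\{0,1,2,3\}\setminus\{i,s\}$.

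Fixing $i=0$ (the other indices are symmetric), I would expand $\mathbf{w}_0 = A\boldsymbol{t}_{1,2}+B\boldsymbol{t}_{1,3}$ in a basis of the tangent plane of $\boldsymbol{f}_0$. The conditions on $\boldsymbol{f}_2$ and $\boldsymbol{f}_3$ (exploiting $\boldsymbol{t}_{1,3}^T\boldsymbol{n}_2=\boldsymbol{t}_{1,2}^T\boldsymbol{n}_3=0$) yield $A = \lambda_2\tilde A$ and $B = \lambda_3\tilde B$ with $\tilde A, \tilde B \in P_{k-1}(K;\mathbb{R})$. The condition on $\boldsymbol{f}_1$, together with the geometric identity $\boldsymbol{t}_{2,3}^T\boldsymbol{n}_1=0$ (since $\boldsymbol{x}_2\boldsymbol{x}_3 \subset \boldsymbol{f}_1$ gives $\boldsymbol{t}_{1,2}^T\boldsymbol{n}_1 = \boldsymbol{t}_{1,3}^T\boldsymbol{n}_1$), reduces to $\lambda_2\tilde A + \lambda_3\tilde B = 0$ on $\boldsymbol{f}_1$. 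Since $\lambda_2$ is not identically zero on the edge $\boldsymbol{f}_1\cap\boldsymbol{f}_3$ (where $\lambda_3|_{\boldsymbol{f}_1}=0$), this forces $\tilde A|_{\boldsymbol{f}_1}$ to vanish on that edge, so the quotient $p_{23}|_{\boldsymbol{f}_1} := -\tilde A|_{\boldsymbol{f}_1}/\lambda_3|_{\boldsymbol{f}_1}$ is genuinely a polynomial of degree $\leq k-2$. Extending $p_{23}$ polynomially to $K$ and setting $p_{12} := (\tilde A + \lambda_3 p_{23})/\lambda_1$ and $p_{13} := (\tilde B - \lambda_2 p_{23})/\lambda_1$ (both in $P_{k-2}(K;\mathbb{R})$ by construction), the relation $\boldsymbol{t}_{2,3} = \boldsymbol{t}_{1,3}-\boldsymbol{t}_{1,2}$ gives $\mathbf{w}_0 = \lambda_1\lambda_2 p_{12}\boldsymbol{t}_{1,2} + \lambda_1\lambda_3 p_{13}\boldsymbol{t}_{1,3} + \lambda_2\lambda_3 p_{23}\boldsymbol{t}_{2,3}$, placing $\boldsymbol{n}_0\mathbf{w}_0^T \in V_{K,k,b}$.

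The main obstacle is the coupled compatibility on $\boldsymbol{f}_1$: unlike in Theorem 3.1, where each $\lambda_j\lambda_l\lambda_m$ factor could be read off face-by-face independently, here the three edge tangents of the triangular face $\boldsymbol{f}_i$ are linearly dependent, so the three candidate coefficients $p_{j,l}$ for a fixed $i$ are entangled. The geometric identity $\boldsymbol{t}_{2,3}^T\boldsymbol{n}_1=0$ is precisely the coincidence that synchronizes the two prospective boundary traces of $p_{23}$ and guarantees that the quotients used to define $p_{12}, p_{13}, p_{23}$ are genuinely polynomial; without it the factorization would fail.
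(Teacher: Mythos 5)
Your proof is correct, and while the overall skeleton (term-by-term verification of the easy inclusion, then Lemma 4.1 plus extraction of barycentric factors for the converse) matches the paper, the execution of the one genuinely delicate step differs. The paper expands $\mathbf{v}$ redundantly using all three edge tangents of each face, so that for fixed $i$ the three coefficients $p_i^{j,l}$ are non-unique from the outset; it then exploits this non-uniqueness by writing $p_0^{1,3},p_0^{2,3}$ out in monomials of $\lambda_1,\lambda_2,\lambda_3$, deducing $p_1=q_1=0$, $p_2=-q_2$ from the face-$\boldsymbol{f}_3$ condition, and redistributing via $\widetilde{p}_0^{1,2}=p_0^{1,2}+\lambda_1\lambda_2p_2$, etc. You instead start from the minimal expansion $\mathbf{w}_0=A\boldsymbol{t}_{1,2}+B\boldsymbol{t}_{1,3}$, which makes the factors $A=\lambda_2\tilde A$, $B=\lambda_3\tilde B$ immediate and unambiguous, and you manufacture the third coefficient $p_{23}$ explicitly by polynomial division of $\tilde A|_{\boldsymbol{f}_1}$ by $\lambda_3|_{\boldsymbol{f}_1}$ (legitimate because the face-$\boldsymbol{f}_1$ relation $\lambda_2\tilde A+\lambda_3\tilde B=0$ forces $\tilde A$ to vanish on the line $\lambda_3=0$ inside $\boldsymbol{f}_1$), followed by a degree-preserving extension to $K$; the identity $\boldsymbol{t}_{1,2}+\boldsymbol{t}_{2,3}=\boldsymbol{t}_{1,3}$ then makes the reassembled sum telescope back to $\mathbf{w}_0$, exactly as the paper uses it to show its redistribution leaves $\mathbf{v}$ unchanged. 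The two routes rely on the same geometric coincidences ($\boldsymbol{t}_{2,3}^T\boldsymbol{n}_1=0$ and the dependence of the three edge tangents), but your version buys a fully constructive justification of the claim that the paper states somewhat tersely ("$p_0^{1,2}$ can be chosen such that it contains a factor $\lambda_1\lambda_2$"), at the cost of introducing an extension step off the face $\boldsymbol{f}_1$; the degree bookkeeping ($\tilde A,\tilde B\in P_{k-1}$, hence $p_{12},p_{13},p_{23}\in P_{k-2}$) lands you correctly in $V_{K,k,b}$.
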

\begin{proof}
	Consider a function $\mathbf{v} \in  \lambda_{j}\lambda_{l}P_{k-2}(K;\mathbb{R})\boldsymbol{n}_{i}\boldsymbol{t}_{j,l}^{T}$.\ Note that $\mathbf{v}$ vanishes on the faces $\boldsymbol{f}_{j}$ and $\boldsymbol{f}_{l}$.\ For other face which contains edge $\boldsymbol{x}_{j}\boldsymbol{x}_{l}$,\ its unit normal vector $\boldsymbol{n}$ is perpendicular to the tangential vertor $\boldsymbol{t}_{j,l}$ of edge $\boldsymbol{x}_{j}\boldsymbol{x}_{l}$,\ which implies $\mathbf{v}\boldsymbol{n} = 0$ on that face and consequently $\mathbf{v}\in V_{\partial K,k,0}$.\ Hence
	\begin{equation*}
	V_{K,k,b}\subset V_{\partial K,k,0}.
	\end{equation*} 
	It remains to show the converse.\ Given $\mathbf{v} \in V_{\partial K,k,0}$,\ by Lemma 4.1,\ it can be represented as
	\begin{equation}\label{eq 3.1}
	\mathbf{v} = \sum_{i=0}^{3}\sum_{\substack{0\leq j<l\leq 3\\ j,l\neq i}}p_{i}^{j,l}\boldsymbol{n}_{i}\boldsymbol{t}_{j,l}^{T},\quad  p_{i}^{j,l}\in P_{k}(K;\mathbb{R}).
	\end{equation}
	Note that the representation is not unique since the three tangential vectors of the edges on face $\boldsymbol{f}$ are not linearly independent.\ It will be shown that there exists a representation such that  $p_{i}^{j,l}$ contains a factor $\lambda_{j}\lambda_{l}$.
	
	Without loss of  generality,\ consider the case $i=0$.\ On face $\boldsymbol{f}_{1}$,\ it holds that
	\begin{equation*}
	0 = \mathbf{v} \boldsymbol{n}_{1} = \sum_{i=0}^{3}\sum_{\substack{0\leq j<l\leq 3\\ j,l\neq i}}p_{i}^{j,l}\boldsymbol{n}_{i}\boldsymbol{t}_{j,l}^{T}\boldsymbol{n}_{1} = \sum_{i=0,2,3}\sum_{\substack{0\leq j<l\leq 3\\ j,l\neq i}}p_{i}^{j,l}\boldsymbol{n}_{i}(\boldsymbol{t}_{j,l}^{T}\boldsymbol{n}_{1}).
	\end{equation*}
	Since $\boldsymbol{n}_0,\boldsymbol{n}_2,\boldsymbol{n}_3$ are linearly independent,\ this leads to
	\begin{equation*}
	\begin{aligned}
	&0= p_{0}^{1,2}\boldsymbol{t}_{1,2}^{T}\boldsymbol{n}_1 + p_{0}^{2,3}\boldsymbol{t}_{2,3}^{T}\boldsymbol{n}_1 + p_{0}^{1,3}\boldsymbol{t}_{1,3}^{T}\boldsymbol{n}_1 = p_{0}^{1,2}\boldsymbol{t}_{1,2}^{T}\boldsymbol{n}_1 +p_{0}^{1,3}\boldsymbol{t}_{1,3}^{T}\boldsymbol{n}_1,\\
	&0= p_{2}^{0,1}\boldsymbol{t}_{0,1}^{T}\boldsymbol{n}_1 + p_{2}^{0,3}\boldsymbol{t}_{0,3}^{T}\boldsymbol{n}_1 + p_{2}^{1,3}\boldsymbol{t}_{1,3}^{T}\boldsymbol{n}_1 = p_{2}^{0,1}\boldsymbol{t}_{0,1}^{T}\boldsymbol{n}_1 +p_{2}^{1,3}\boldsymbol{t}_{1,3}^{T}\boldsymbol{n}_1,\\
	&0= p_{3}^{0,1}\boldsymbol{t}_{0,1}^{T}\boldsymbol{n}_1 + p_{3}^{0,2}\boldsymbol{t}_{0,2}^{T}\boldsymbol{n}_1 + p_{3}^{1,2}\boldsymbol{t}_{1,2}^{T}\boldsymbol{n}_1 = p_{3}^{0,1}\boldsymbol{t}_{0,1}^{T}\boldsymbol{n}_1 +p_{3}^{1,2}\boldsymbol{t}_{1,2}^{T}\boldsymbol{n}_1,				 
	\end{aligned}
	\quad \text{on}\quad \boldsymbol{f}_1
	\end{equation*}
	which implies that $p_{0}^{1,2}\boldsymbol{t}_{1,2}^{T}\boldsymbol{n}_1 +p_{0}^{1,3}\boldsymbol{t}_{1,3}^{T}\boldsymbol{n}_1$ contains a factor $\lambda_1$.\ A similar argument on faces $\boldsymbol{f}_2,\boldsymbol{f}_3$ shows that $p_{0}^{1,2}\boldsymbol{t}_{1,2}^{T}\boldsymbol{n}_2 +p_{0}^{2,3}\boldsymbol{t}_{2,3}^{T}\boldsymbol{n}_2$ contains a factor $\lambda_2$ and that $p_{0}^{1,3}\boldsymbol{t}_{1,3}^{T}\boldsymbol{n}_3 +p_{0}^{2,3}\boldsymbol{t}_{2,3}^{T}\boldsymbol{n}_3$ contains a factor $\lambda_3$.
	
	Since the representation \eqref{eq 3.1} is not unique,\ $p_{0}^{1,2}$ can be chosen such that it contains a factor $\lambda_1\lambda_2$.\ Note that $\boldsymbol{t}_{i,j}^{T}\boldsymbol{n}_{i} = -\boldsymbol{t}_{j,i}^{T}\boldsymbol{n}_{i} \neq 0,i\neq j$,\ this implies that $p_0^{1,3}$ contains a factor $\lambda_1$ and that $p_{0}^{2,3}$ contains a factor $\lambda_2$.\ In addition $p_0^{1,3}$ and $p_0^{2,3}$ have an expression by $\lambda_1,\lambda_2,\lambda_3$ as follows
	\begin{equation*}
	\begin{aligned}
	&p_0^{1,3} = \lambda_1(p_{1} + \lambda_2 p_2 + \lambda_3 p_3),\\
	&p_0^{2,3} = \lambda_2(q_{1} + \lambda_1 q_2 + \lambda_3 q_3),
	\end{aligned}
	\end{equation*}
	here
	\begin{equation*}
	\begin{aligned}
	&p_1 = \sum_{i=0}^{k-1}c_{1,i}\lambda_1^{i},\quad p_2 = \sum_{i+j \leq k-2}c_{1,i,j}\lambda_1^{i}\lambda_2^{j},\quad p_3\in P_{k-2}(K;\mathbb{R}),\\
	&q_1 = \sum_{i=0}^{k-1}c_{2,i}\lambda_2^{i},\quad q_2 = \sum_{i+j \leq k-2}c_{2,i,j}\lambda_1^{i}\lambda_2^{j},\quad q_3\in P_{k-2}(K;\mathbb{R}).
	\end{aligned}
	\end{equation*}
	Since $\lambda_3\big| p_{0}^{1,3}\boldsymbol{t}_{1,3}^{T}\boldsymbol{n}_3 +p_{0}^{2,3}\boldsymbol{t}_{2,3}^{T}\boldsymbol{n}_3$ and $\boldsymbol{t}_{1,3}^{T}\boldsymbol{n}_3 = \boldsymbol{t}_{2,3}^{T}\boldsymbol{n}_3$,\ this implies that $p_1 = q_1 =0,p_2 = -q_2$.\ This allows to use
	\begin{equation}\label{eq 3.2}
	\widetilde{p}_0^{1,2} = p_0^{1,2} + \lambda_1\lambda_2p_2,\quad \widetilde{p}_0^{1,3} = p_0^{1,3} - \lambda_1\lambda_2p_2,\quad \widetilde{p}_0^{2,3} = p_0^{2,3} - \lambda_1\lambda_2q_2,
	\end{equation}
	to replace $p_0^{1,2},p_0^{1,3},p_0^{2,3}$.\ The equation \eqref{eq 3.2} shows that $\widetilde{p}_0^{i,j}$ contains a factor $\lambda_{i}\lambda_{j}$,\ and that
	\begin{equation*}
	\widetilde{p}_0^{1,2}\boldsymbol{n}_0\mathbf{t}_{1,2}^{T}+\widetilde{p}_0^{1,3}\boldsymbol{n}_0\mathbf{t}_{1,3}^{T}+\widetilde{p}_0^{2,3}\boldsymbol{n}_0\mathbf{t}_{2,3}^{T} = p_0^{1,2}\boldsymbol{n}_0\mathbf{t}_{1,2}^{T}+p_0^{1,3}\boldsymbol{n}_0\mathbf{t}_{1,3}^{T}+p_0^{2,3}\boldsymbol{n}_0\mathbf{t}_{2,3}^{T},
	\end{equation*}
	since $\boldsymbol{t}_{1,2}^{T} + \boldsymbol{t}_{2,3}^{T} = \boldsymbol{t}_{1,3}^{T}$.\ A similar argument shows that other coefficient functions $p_{i}^{j,l}$ can be chosen such that they contain a factor $\lambda_{j}\lambda_{l}$.\ Hence
	\begin{equation*}
	\mathbf{v} \in V_{K,k,b}.
	\end{equation*}
	This completes the proof.
\end{proof}

The next result is concerning the divergence space of the bubble function space.\ Let  $\Xi : \mathbb{M}\rightarrow \mathbb{T}$ be an algebraic operator $\Xi T = T - \frac{1}{3}\operatorname{tr}(T)I$,\ where $I$ is the identity matrix.\ Introduce the following space on element $K$
\begin{equation*}
Q(K) := \{\mathbf{q}\in H^{1}(K;\mathbb{R}^3):\Xi (\nabla \mathbf{q}) = 0\}.
\end{equation*}
For any vector $\mathbf{q}\in Q(K)$,\ it can be written as $\mathbf{q} = (a,b,c)^{T} + d(x,y,z)^{T}$, where $a,b,c,d$ are constants.\ The dimension of $Q(K)$ is 4.\ Define the orthogonal complement space of $Q(K)$ with respect to $P_{k}(K;\mathbb{R}^3)$ by 
\begin{equation*}
Q_{k}^{\perp}(K) := \{\mathbf{q}= (q_1,q_2,q_3)^{T}\in P_{k}(K;\mathbb{R}^3) : \int_{K}q_{i}=0,1\leq i \leq 3,\int_{K}(xq_1+yq_2+zq_3) =0 \}.
\end{equation*}

\begin{theorem}
	For $k \geq 3$,\ it holds that 
	\begin{equation*}
	\operatorname{div} V_{\partial K,k,0} = Q_{k-1}^{\perp}(K).
	\end{equation*}
\end{theorem}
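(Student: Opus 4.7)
The proof splits into two inclusions. For $\operatorname{div} V_{\partial K,k,0} \subset Q_{k-1}^{\perp}(K)$, I would apply integration by parts twice, using the boundary condition $\mathbf{v}\mathbf{n}|_{\partial K} = 0$. For $\mathbf{v} \in V_{\partial K, k, 0}$ and each component index $i$,
\begin{equation*}
\int_K (\operatorname{div}\mathbf{v})_i\,dx = \int_{\partial K}(\mathbf{v}\mathbf{n})_i\,ds = 0,
\end{equation*}
while using the tracelessness of $\mathbf{v}$,
\begin{equation*}
\int_K \mathbf{x}\cdot\operatorname{div}\mathbf{v}\,dx = -\int_K \operatorname{tr}(\mathbf{v})\,dx + \int_{\partial K}\mathbf{x}\cdot(\mathbf{v}\mathbf{n})\,ds = 0.
\end{equation*}
These are exactly the four orthogonality conditions that define $Q_{k-1}^{\perp}(K)$, so $\operatorname{div}\mathbf{v}\in Q_{k-1}^{\perp}(K)$.

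For the reverse inclusion, the plan is a dimension count. By Theorem 4.1, $V_{\partial K, k, 0} = V_{K, k, b}$, and $\dim V_{K,k,b}$ can be computed from the spanning presentation by carefully accounting for the linear relations among the building blocks $\lambda_j\lambda_l P_{k-2}(K;\mathbb{R})\mathbf{n}_i\mathbf{t}_{j,l}^{T}$ (of the same type as those exploited in the proof of Theorem 4.1 via $\mathbf{t}_{1,2}+\mathbf{t}_{2,3}=\mathbf{t}_{1,3}$). The divergence of a rank-one block admits the explicit form
\begin{equation*}
\operatorname{div}\bigl(q\,\mathbf{n}_i\mathbf{t}_{j,l}^{T}\bigr) = (\mathbf{t}_{j,l}\cdot\nabla q)\,\mathbf{n}_i,
\end{equation*}
which is a scalar polynomial multiple of $\mathbf{n}_i$. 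Since any three of $\mathbf{n}_0,\mathbf{n}_1,\mathbf{n}_2,\mathbf{n}_3$ are linearly independent, the equation $\operatorname{div}\mathbf{v}=0$ reduces to a system of scalar equations on the coefficient polynomials, which is the route I would take to determine $\dim\bigl(\ker(\operatorname{div})\cap V_{\partial K, k, 0}\bigr)$. Combining with the already established inclusion, equality of dimensions forces $\operatorname{div} V_{\partial K,k,0}=Q_{k-1}^{\perp}(K)$.

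The main obstacle is the precise identification of the kernel. A conceptual route is to recognize $\ker(\operatorname{div})\cap V_{\partial K, k, 0}$ as the image, under curl, of a suitable polynomial space of symmetric matrices (using that $\operatorname{curl}$ applied to a symmetric field produces a traceless, divergence-free field), and then to read off its dimension via Theorem 3.2; some care is needed here to avoid circularity with the exactness argument carried out later in Section 5. A more hands-on alternative is to build, for each $\mathbf{q}\in Q_{k-1}^{\perp}(K)$, an explicit preimage $\mathbf{v}\in V_{\partial K, k, 0}$ with $\operatorname{div}\mathbf{v}=\mathbf{q}$, exploiting that the four constraints defining $Q_{k-1}^{\perp}(K)$ are exactly the compatibility conditions needed to solve the corresponding Neumann-type problem for $\mathbf{v}$ within the bubble space.
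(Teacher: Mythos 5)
Your first inclusion is correct and is essentially the paper's: integration by parts against the four generators of $Q(K)$, using $\mathbf{v}\boldsymbol{n}|_{\partial K}=0$ and $\operatorname{tr}\mathbf{v}=0$. (The paper phrases it slightly more compactly as $\int_K\operatorname{div}\mathbf{v}\cdot\mathbf{q}=\int_K\mathbf{v}:\Xi(\nabla\mathbf{q})=0$ for $\mathbf{q}\in Q(K)$, but the content is identical.)

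The reverse inclusion, however, is where your proposal has a genuine gap: you reduce it to computing $\dim\bigl(\ker(\operatorname{div})\cap V_{\partial K,k,0}\bigr)$ or to constructing explicit preimages, and then you explicitly acknowledge that this identification is "the main obstacle" without resolving it. Neither of your two suggested routes is carried out: the first (identifying the kernel as $\operatorname{curl}$ of symmetric polynomial fields) requires the exactness of the local polynomial complex on a single tetrahedron, which is not available at this point in the paper and is of comparable difficulty to the statement being proved; the second (solving a Neumann-type problem inside the bubble space) asserts precisely the surjectivity you are trying to establish. As written, the proposal proves only $\operatorname{div}V_{\partial K,k,0}\subset Q_{k-1}^{\perp}(K)$.

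The paper closes this gap by a duality argument that avoids any kernel computation. Suppose the inclusion were strict; then there is a nonzero $\mathbf{p}\in Q_{k-1}^{\perp}(K)$ with $\int_K\operatorname{div}\mathbf{v}\cdot\mathbf{p}=0$ for all $\mathbf{v}\in V_{\partial K,k,0}$, equivalently $\int_K\mathbf{v}:\Xi(\nabla\mathbf{p})=0$. Using the basis $\boldsymbol{n}_i\boldsymbol{t}_{\overline{i+1},\overline{i+1+j}}^{T}$ of $\mathbb{T}$ from Lemma 4.1 and its dual basis $M_{i,j}$, one expands $\Xi(\nabla\mathbf{p})=\sum_{i,j}M_{i,j}p_{i,j}$ with $p_{i,j}\in P_{k-2}(K;\mathbb{R})$ and tests with the specific bubble $\mathbf{v}=\sum_{i,j}\lambda_{\overline{i+1}}\lambda_{\overline{i+1+j}}\,p_{i,j}\,\boldsymbol{n}_i\boldsymbol{t}_{\overline{i+1},\overline{i+1+j}}^{T}\in V_{K,k,b}$. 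This yields $\int_K\sum_{i,j}\lambda_{\overline{i+1}}\lambda_{\overline{i+1+j}}\,p_{i,j}^2=0$, and since the barycentric weights are nonnegative on $K$, all $p_{i,j}$ vanish; hence $\Xi(\nabla\mathbf{p})=0$, so $\mathbf{p}\in Q(K)\cap Q_{k-1}^{\perp}(K)=\{0\}$, a contradiction. If you want to salvage your dimension-count strategy instead, you would have to actually determine the kernel dimension, which is considerably more work than this positivity trick.
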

\begin{proof}
	For any $\mathbf{v} \in V_{\partial K,k,0}$,\ it holds that
	\begin{equation*}
	\int_{K} \operatorname{div}\mathbf{v}\cdot \mathbf{q} = \int_{K}\mathbf{v}:\nabla \mathbf{q} = \int_{K}\mathbf{v}:\Xi(\nabla \mathbf{q}) = 0,\quad \forall \mathbf{q}\in Q(K).
	\end{equation*}
	This implies that 
	\begin{equation*}
	\operatorname{div}V_{\partial K,k,0} \subset Q_{k-1}^{\perp}(K).
	\end{equation*}
	To prove the converse,\ assume that $\operatorname{div}V_{\partial K,k,0} \neq Q_{k-1}^{\perp}(K)$.\ Then there exists a nonzero vector $\mathbf{p}\in Q_{k-1}^{\perp}(K)$ such that 
	\begin{equation*}
	\int_{K} \operatorname{div}\mathbf{v}\cdot \mathbf{p} = 0,\quad \forall\mathbf{v}\in V_{\partial K,k,0}.
	\end{equation*}
	By an integration by parts,\ for any $\mathbf{v}\in V_{\partial K,k,0}$,\ it leads to
	\begin{equation*}
	\int_{K} \operatorname{div}\mathbf{v}\cdot \mathbf{p} = \int_{K}\mathbf{v}:\Xi(\nabla \mathbf{p}) = 0.
	\end{equation*}
	Let $\overline{n}$ denote the congruence of integer $n$ with modulus 4,\ namely,
	\begin{equation*}
	\overline{n} \equiv n(\operatorname{mod}4),\quad 0\leq \overline{n} \leq 3.
	\end{equation*}
	By Lemma 4.1,\ it follows that $\boldsymbol{n}_{i}\boldsymbol{t}_{\overline{i+1},\overline{i+2}}^{T},\boldsymbol{n}_{i}\boldsymbol{t}_{\overline{i+1},\overline{i+3}}^{T},0\leq i\leq 3$ form a basis of $\mathbb{T}$.\ Then there exists an associated dual basis $M_{i,j},0\leq i\leq 3,1\leq j\leq 2$,\ such that
	\begin{equation*}
	\boldsymbol{n}_{i}\boldsymbol{t}_{\overline{i+1},\overline{i+1+j}}^{T}:M_{k,l} = \delta_{i,k}\delta_{j,l},\quad 0\leq i,k \leq 3,1\leq j,l \leq 2.
	\end{equation*}
	Since $\Xi (\nabla \mathbf{p})\in P_{k-2}(K;\mathbb{T})$,\ it follows that there exists $p_{i,j} \in P_{k-2}(K;\mathbb{R}),0\leq i \leq 3,1\leq j\leq 2$ such that
	\begin{equation*}
	\Xi (\nabla \mathbf{p}) = \sum_{i=0}^{3}\sum_{j=1}^2M_{i,j}p_{i,j}.
	\end{equation*}
	Then the choice of
	\begin{equation*}
	\mathbf{v} = \sum_{i=0}^{3}\sum_{j=1}^2\lambda_{\overline{i+1}}\lambda_{\overline{i+1+j}}p_{i,j}\boldsymbol{n}_{i}\boldsymbol{t}_{\overline{i+1},\overline{i+1+j}}^{T}\in V_{K,k,b},
	\end{equation*}
	proves
	\begin{equation*}
	\int_{K} \mathbf{v}:\Xi(\nabla \mathbf{p})= \int_{K}\sum_{i=0}^{3}\sum_{j=1}^2\lambda_{\overline{i+1}}\lambda_{\overline{i+1+j}}p_{i,j}^2 =0.
	\end{equation*}
	As $\lambda_{\overline{i+1}}\lambda_{\overline{i+1+j}} \geq 0$ on $K$,\ this implies that $p_{i,j} \equiv 0$,\ and consequently $\Xi(\nabla \mathbf{p})=0$ and $\mathbf{p}\in Q(K)$.\ This contradicts with $\mathbf{p}\in Q_{k-1}^{\perp}(K)$.\ Hence $Q_{k-1}^{\perp}(K)\subset \operatorname{div}V_{\partial K,k,0}$.\ This completes the proof.
\end{proof}
\begin{remark}
	Given $\mathbf{v} \in V_{\partial K,k,0}$,\ by the definition of the bubble function space,\ it implies that $\mathbf{v}$ vanishes at all vertices of element $K$.
\end{remark}
\subsection{$\mathbf{H(\operatorname{div})}$ bubble space with additional conditions}
This subsection considers a $H(\operatorname{div})$ bubble space with additional conditions.\ Define the modified bubble function space $V_{\partial K,k,0}^{*}$ with additional conditions at vertices by
\begin{equation*}
V_{\partial K,k,0}^{*} := \{\mathbf{v}\in V_{\partial K,k,0}:\nabla\mathbf{v}(\boldsymbol{x}_{i}) = 0,0\leq i \leq 3\}.
\end{equation*}
Then the corresponding modified $H(\operatorname{div})$ bubble space $V_{K,k,b}^{*} $ is defined by
\begin{equation*}
V_{K,k,b}^{*}  := \sum_{i=0}^{3}\sum_{\substack{0\leq j<l\leq 3\\ j,l\neq i}}\lambda_{j}\lambda_{l}P_{k-2}^{(j,l,0)}(K;\mathbb{R})\boldsymbol{n}_{i}\boldsymbol{t}_{j,l}^{T},
\end{equation*}
where the auxiliary space $P_{k}^{(i,j,0)}(K;\mathbb{R}) := \{u\in P_{k}(K;\mathbb{R}):u\text{ vanishes at }\boldsymbol{x}_{i},\boldsymbol{x}_{j}\}$.
\begin{theorem}
	For $k \geq 3$,\ it holds that
	\begin{equation*}
	V_{\partial K,k,0}^{*} = V_{K,k,b}^{*}.
	\end{equation*}
\end{theorem}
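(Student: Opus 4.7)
My plan is to prove both inclusions of \eqref{Th2.2}, mirroring the structure of the proof of Theorem 3.2 but using Theorem 4.1 as the launching point instead of Theorem 3.1. Theorem 4.1 tells us every element of $V_{\partial K,k,0}$ admits a factored representation $\sum_{i}\sum_{j<l,\,j,l\neq i}\lambda_{j}\lambda_{l}p_{i}^{j,l}\boldsymbol{n}_{i}\boldsymbol{t}_{j,l}^{T}$, so Theorem 4.2 reduces to locating, inside such a representation, the correct vanishing of the scalar coefficients $p_{i}^{j,l}$ at the endpoints $\boldsymbol{x}_{j}$ and $\boldsymbol{x}_{l}$ of the corresponding edge.

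For the direction $V_{K,k,b}^{*}\subseteq V_{\partial K,k,0}^{*}$ I would take $\mathbf{v}\in V_{K,k,b}^{*}$ written in the factored form with $u_{i}^{j,l}\in P_{k-2}^{(j,l,0)}(K;\mathbb{R})$, note that Theorem 4.1 already gives $\mathbf{v}\in V_{\partial K,k,0}$, and then apply the product rule to each scalar factor $\lambda_{j}\lambda_{l}u_{i}^{j,l}$ at an arbitrary vertex $\boldsymbol{x}_{s}$. If $s\notin\{j,l\}$ both barycentric factors vanish at $\boldsymbol{x}_{s}$, killing every term of the product rule; if $s\in\{j,l\}$, one barycentric factor still vanishes while the surviving term is multiplied by $u_{i}^{j,l}(\boldsymbol{x}_{s})$, which is zero by definition of $P_{k-2}^{(j,l,0)}$. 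Hence $\nabla\mathbf{v}(\boldsymbol{x}_{s})=0$ at every vertex.

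For the reverse inclusion I would start with $\mathbf{v}\in V_{\partial K,k,0}^{*}$, fix any representation $\sum\lambda_{j}\lambda_{l}p_{i}^{j,l}\boldsymbol{n}_{i}\boldsymbol{t}_{j,l}^{T}$ from Theorem 4.1, and show that this very representation already satisfies $p_{i}^{j,l}(\boldsymbol{x}_{j})=p_{i}^{j,l}(\boldsymbol{x}_{l})=0$. Concretely, at $\boldsymbol{x}_{0}$ only the summands with $j=0$ contribute to $\nabla\mathbf{v}(\boldsymbol{x}_{0})$, and their combined action on a test vector $\boldsymbol{w}\in\mathbb{R}^{3}$ is $\sum_{l=1}^{3}\sum_{i\in\{1,2,3\}\setminus\{l\}}p_{i}^{0,l}(\boldsymbol{x}_{0})\,(\nabla\lambda_{l}\cdot\boldsymbol{w})\,\boldsymbol{n}_{i}\boldsymbol{t}_{0,l}^{T}$. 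Setting this to zero and letting $\boldsymbol{w}$ run over the dual basis of $\{\nabla\lambda_{1},\nabla\lambda_{2},\nabla\lambda_{3}\}$ decouples the sum in $l$, and then linear independence of the two remaining normals $\boldsymbol{n}_{i}$ (with $i\in\{1,2,3\}\setminus\{l\}$) forces $p_{i}^{0,l}(\boldsymbol{x}_{0})=0$. Repeating the same computation at $\boldsymbol{x}_{1},\boldsymbol{x}_{2},\boldsymbol{x}_{3}$ yields $p_{i}^{j,l}(\boldsymbol{x}_{s})=0$ for every $s\in\{j,l\}$, so $p_{i}^{j,l}\in P_{k-2}^{(j,l,0)}$ and $\mathbf{v}\in V_{K,k,b}^{*}$.

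The main bookkeeping hurdle I anticipate is the double use of linear independence at each vertex — first of the three barycentric gradients to separate contributions by $l$, then of the face normals to extract the individual coefficients — and the need to track which indices $i$ remain in the inner sum at each stage. A secondary subtlety is that the representation produced by Theorem 4.1 is not unique because of the relations $\boldsymbol{t}_{j,l}+\boldsymbol{t}_{l,m}=\boldsymbol{t}_{j,m}$ exploited in that proof; I would reassure myself that every admissible modification of $p_{i}^{j,l}$ is a polynomial divisible by a barycentric coordinate $\lambda_{m}$ with $m\notin\{j,l\}$, which automatically vanishes at both $\boldsymbol{x}_{j}$ and $\boldsymbol{x}_{l}$, so the property ``$p_{i}^{j,l}$ vanishes at $\boldsymbol{x}_{j}$ and $\boldsymbol{x}_{l}$'' is invariant under the ambiguity and only needs to be verified for one representation.
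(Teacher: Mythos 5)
Your proposal is correct and follows essentially the same route as the paper's proof: both directions rest on the factored representation supplied by Theorem 4.1 together with the product rule for $\nabla(\lambda_j\lambda_l p_i^{j,l})$ at the vertices, the only cosmetic difference being that you decouple first in the edge index $l$ (via the dual basis of $\{\nabla\lambda_1,\nabla\lambda_2,\nabla\lambda_3\}$) and then in the normal index $i$, whereas the paper first separates the normals via Lemma 4.1 and then uses the linear independence of $\boldsymbol{n}_i\boldsymbol{t}_{j,l}^{T}$ for the two surviving edges. Your closing worry about non-uniqueness of the representation is harmless but unnecessary, since (as you note) establishing the vanishing of the coefficients for one fixed representation already yields $\mathbf{v}\in V_{K,k,b}^{*}$.
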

\begin{proof}
	For any $u \in \lambda_{j}\lambda_{l}P_{k-2}^{(j,l,0)}(K;\mathbb{R})$,\ the gradient $\nabla u$ vanishes at all vertices of $K$.\ By Theorem 4.1,\ it implies that $V_{K,k,b}^{*}\subset V_{\partial K,k,0}^{*} $.
	
	It remains to show the converse.\ Given $\mathbf{v}\in V_{\partial K,k,0}^{*}$,\ by Theorem 4.1,\ it has an expression
	\begin{equation*}
	\mathbf{v} = \sum_{i=0}^{3}\sum_{\substack{0\leq j<l\leq 3\\ j,l\neq i}}\lambda_{j}\lambda_{l}p_{i}^{j,l}\boldsymbol{n}_{i}\boldsymbol{t}_{j,l}^{T},\quad  p_{i}^{j,l}\in P_{k-2}(K;\mathbb{R}).
	\end{equation*}
	Note that
	\begin{equation*}
	\partial_{k}\mathbf{v}(\boldsymbol{x}_{t}) = \sum_{i=0}^{3}\sum_{\substack{0\leq j<l\leq 3\\ j,l\neq i}}\partial_{k}(\lambda_{j}\lambda_{l}p_{i}^{j,l})(\boldsymbol{x}_{t})\boldsymbol{n}_{i}\boldsymbol{t}_{j,l}^{T} = 0,\quad 1 \leq k \leq 3,0 \leq t \leq 3.
	\end{equation*}
	By Lemma 4.1,\ for $ 0 \leq i \leq 3$,
	\begin{equation*}
	\sum_{\substack{0\leq j<l\leq 3\\ j,l\neq i}}\partial_{k}(\lambda_{j}\lambda_{l}p_{i}^{j,l})(\boldsymbol{x}_{t})\boldsymbol{n}_{i}\boldsymbol{t}_{j,l}^{T} = 0,\quad 1 \leq k \leq 3,0 \leq t \leq 3.
	\end{equation*}
	Without loss of generality,\ consider the case where $i=0$ and $t=1$.\ Note that
	\begin{equation*}
	\begin{aligned}
	&\partial_{k} (\lambda_1\lambda_2p_0^{1,2})(\boldsymbol{x}_{1}) =(\lambda_2p_0^{1,2}\partial_{k}\lambda_1 +  \lambda_1p_0^{1,2}\partial_{k}\lambda_2 +\lambda_1\lambda_2\partial_{k}p_0^{1,2})(\boldsymbol{x}_{1}) = (p_0^{1,2}\partial_{k}\lambda_2)(\boldsymbol{x}_1),\\
	&\partial_{k} (\lambda_1\lambda_3p_0^{1,3})(\boldsymbol{x}_{1}) =(\lambda_3p_0^{1,3}\partial_{k}\lambda_1 +  \lambda_1p_0^{1,3}\partial_{k}\lambda_3 +\lambda_1\lambda_3\partial_{k}p_0^{1,3})(\boldsymbol{x}_{1}) = (p_0^{1,3}\partial_{k}\lambda_3)(\boldsymbol{x}_1),\\
	&\partial_{k} (\lambda_2\lambda_3p_0^{2,3})(\boldsymbol{x}_{1}) =(\lambda_3p_0^{2,3}\partial_{k}\lambda_2 +  \lambda_2p_0^{2,3}\partial_{k}\lambda_3 +\lambda_2\lambda_3\partial_{k}p_0^{2,3})(\boldsymbol{x}_{1}) = 0.
	\end{aligned}
	\end{equation*}
	Then 
	\begin{equation*}
	\begin{split}
	\partial_{k} (\lambda_1\lambda_2p_0^{1,2})(\boldsymbol{x}_{1})\boldsymbol{n}_0\boldsymbol{t}_{1,2}^{T} + \partial_{k} (\lambda_1\lambda_3p_0^{1,3})(\boldsymbol{x}_{1})\boldsymbol{n}_0\boldsymbol{t}_{1,3}^{T}+ \partial_{k} (\lambda_2\lambda_3p_0^{2,3})(\boldsymbol{x}_{1})\boldsymbol{n}_0\boldsymbol{t}_{2,3}^{T} \\= 	\partial_{k} \lambda_2p_0^{1,2}(\boldsymbol{x}_{1})\boldsymbol{n}_0\boldsymbol{t}_{1,2}^{T} + \partial_{k} \lambda_3p_0^{1,3}(\boldsymbol{x}_{1})\boldsymbol{n}_0\boldsymbol{t}_{1,3}^{T} = 0,\quad 1 \leq k \leq 3.
	\end{split}
	\end{equation*}
	Since $\boldsymbol{n}_0\boldsymbol{t}_{1,2}^{T}$ and $\boldsymbol{n}_0\boldsymbol{t}_{1,3}^{T}$ are linearly independent,\ this implies that
	\begin{equation*}
	\partial_{k}\lambda_2p_0^{1,2}(\boldsymbol{x}_{1}) = \partial_{k}\lambda_3p_0^{1,3}(\boldsymbol{x}_{1})=0,\quad 1 \leq k \leq 3.
	\end{equation*}
	It holds that $p_0^{1,2}$ and $p_0^{1,3}$ vanish at $\boldsymbol{x}_1$.\ A similar argument shows that $p_{i}^{j,l}$ vanishes at $\boldsymbol{x}_{j},\boldsymbol{x}_{l}$.\ This implies that $\mathbf{v}\in V_{K,k,b}^{*}$.\ Hence
	\begin{equation*}
	V_{\partial K,k,0}^{*} \subset V_{K,k,b}^{*}.
	\end{equation*}
	Which completes the proof.
\end{proof}
Now the $H(\operatorname{div})$ finite element space with $C^1$ at vertices is defined by
\begin{equation*}
\begin{split}
V_{k,h} := \{\mathbf{v} \in H(\operatorname{div},\Omega;\mathbb{T}):\mathbf{v} = \mathbf{v}_{c} +\mathbf{v}_{b},\mathbf{v}_{c} \in C^0(\Omega;\mathbb{T}),\\ \mathbf{v}_{c}\text{ is }C^1\text{ at } \mathscr{V},
\mathbf{v}_{c}\big |_{K}\in P_{k}(K;\mathbb{T}),\mathbf{v}_{b}\big |_{K}\in V_{K,k,b}^{*},\forall K\in \mathcal{T}_{h}\}.
\end{split}
\end{equation*}
This space is a  $H(\operatorname{div})$ bubble space enrichment of the Hermite-type $H^1$ space $\widetilde{V}_{k,h} :=  \{\mathbf{v} \in H^1(\Omega;\mathbb{T}),\mathbf{v}\text{ is }C^1\text{ at } \mathscr{V},\mathbf{v}\big |_{K}\in P_{k}(K;\mathbb{T}),\forall K\in \mathcal{T}_{h}\}$.

For $ k \geq 3$,\ the dimension of the bubble space $V_{\partial K,k,0}^{*}$ is 
\begin{equation*}
\operatorname{dim}V_{\partial K,k,0}^{*} = 4\{3[\frac{(k+1)k(k-1)}{6}-2]-\frac{k(k-1)(k-2)}{6}\} = (4k^3+6k^2-10k-72)/3.
\end{equation*}

It follows from the definition that $V_{\partial K,k,0}^{*}$ is a subspace of $V_{\partial K,k,0}$.\ By Theorem 4.2 the divergence of $V_{\partial K,k,0}^{*}$ belongs to $Q_{k-1}^{\perp}(K)$.\ In fact,\ define the space
\begin{equation*}
R_{k}^{\perp}(K) := \{\mathbf{u}\in Q_{k}^{\perp}(K):\mathbf{u}(\boldsymbol{x}_{i}) = 0,0\leq i \leq 3\}.
\end{equation*}
There holds the following theorem.
\begin{theorem}
	For $k \geq 3$,\ it holds that
	\begin{equation*}
	\operatorname{div} V_{\partial K,k,0}^{*} = R_{k-1}^{\perp}(K).
	\end{equation*}
\end{theorem}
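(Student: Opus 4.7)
\emph{Plan.} I will establish the two inclusions separately, with the harder direction mirroring the contradiction argument used in the proof of Theorem~4.2.

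For the forward inclusion $\operatorname{div} V_{\partial K,k,0}^{*}\subseteq R_{k-1}^{\perp}(K)$, take any $\mathbf{v}\in V_{\partial K,k,0}^{*}\subseteq V_{\partial K,k,0}$. Theorem~4.2 already places $\operatorname{div}\mathbf{v}$ in $Q_{k-1}^{\perp}(K)$. In addition, $\nabla\mathbf{v}(\boldsymbol{x}_{i})=0$ at each vertex, and since $(\operatorname{div}\mathbf{v})_{r}(\boldsymbol{x}_{i})=\sum_{s}\partial_{s}\mathbf{v}_{rs}(\boldsymbol{x}_{i})$, we obtain $(\operatorname{div}\mathbf{v})(\boldsymbol{x}_{i})=0$ for every $i$; hence $\operatorname{div}\mathbf{v}\in R_{k-1}^{\perp}(K)$.

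For the reverse inclusion, I will argue by contradiction. Suppose $\operatorname{div} V_{\partial K,k,0}^{*}\subsetneq R_{k-1}^{\perp}(K)$. Since both are finite-dimensional, there is a nonzero $\mathbf{p}\in R_{k-1}^{\perp}(K)$ with $\int_{K}(\operatorname{div}\mathbf{v})\cdot\mathbf{p}=0$ for all $\mathbf{v}\in V_{\partial K,k,0}^{*}$. Using $\mathbf{v}\boldsymbol{n}|_{\partial K}=0$ and the tracelessness of $\mathbf{v}$, integration by parts yields $\int_{K}\mathbf{v}:\Xi(\nabla\mathbf{p})=0$ for every $\mathbf{v}\in V_{K,k,b}^{*}$. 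The desired contradiction then follows once $\Xi(\nabla\mathbf{p})=0$ is established: it forces $\mathbf{p}\in Q(K)$, but $R_{k-1}^{\perp}(K)\subseteq Q_{k-1}^{\perp}(K)$ is by definition $L^{2}$-orthogonal to $Q(K)$, so $\mathbf{p}\in Q(K)\cap Q_{k-1}^{\perp}(K)=\{0\}$, contradicting $\mathbf{p}\ne 0$.

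The main obstacle is deducing $\Xi(\nabla\mathbf{p})=0$ from tests restricted to $V_{K,k,b}^{*}$, which, unlike $V_{K,k,b}$ used in Theorem~4.2, requires each scalar coefficient to vanish at two designated vertices. Expanding $\Xi(\nabla\mathbf{p})=\sum_{i=0}^{3}\sum_{j=1}^{2}p_{i,j}M_{i,j}$ in the dual basis of Theorem~4.2, with $p_{i,j}\in P_{k-2}(K;\mathbb{R})$ and $a=\overline{i+1}$, $b=\overline{i+1+j}$, and testing against $\mathbf{v}=\lambda_{a}\lambda_{b}\phi\,\boldsymbol{n}_{i}\boldsymbol{t}_{a,b}^{T}\in V_{K,k,b}^{*}$ for arbitrary $\phi\in P_{k-2}^{(a,b,0)}(K;\mathbb{R})$ yields $\int_{K}\lambda_{a}\lambda_{b}\phi\,p_{i,j}=0$, which confines $p_{i,j}$ to the weighted-$L^{2}$ orthogonal complement of $P_{k-2}^{(a,b,0)}$ in $P_{k-2}$, a two-dimensional residual subspace per pair $(i,j)$. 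To eliminate this residual, I would bring in the additional tests $\mathbf{v}=\lambda_{\overline{i+2}}\lambda_{\overline{i+3}}\phi\,\boldsymbol{n}_{i}\boldsymbol{t}_{\overline{i+2},\overline{i+3}}^{T}\in V_{K,k,b}^{*}$; using $\boldsymbol{t}_{\overline{i+2},\overline{i+3}}=\boldsymbol{t}_{\overline{i+1},\overline{i+3}}-\boldsymbol{t}_{\overline{i+1},\overline{i+2}}$, these couple $p_{i,1}$ and $p_{i,2}$ through a third weighted-orthogonality relation. Combining these orthogonalities with the vertex constraint $\mathbf{p}(\boldsymbol{x}_{k})=0$, which links $p_{i,j}$ to $\mathbf{p}$ at vertices via $p_{i,j}=\boldsymbol{n}_{i}^{T}\Xi(\nabla\mathbf{p})\boldsymbol{t}_{a,b}$ together with the edge-boundary relations produced by integrating by parts against $\lambda_{a}^{2}\lambda_{b}$ and $\lambda_{a}\lambda_{b}^{2}$ (whose boundary contributions vanish because $\boldsymbol{t}_{a,b}$ lies in the two faces containing the edge $\boldsymbol{x}_{a}\boldsymbol{x}_{b}$), should force $p_{i,j}\equiv 0$ on $K$ for all $(i,j)$, yielding $\Xi(\nabla\mathbf{p})=0$ and completing the argument.
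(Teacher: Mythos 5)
Your forward inclusion is fine and matches what the paper treats as obvious. The reverse inclusion, however, has a genuine gap at exactly the point you flag as ``the main obstacle.'' The contradiction template of Theorem~4.2 does not transfer: there, orthogonality of $\Xi(\nabla\mathbf{p})$ to all of $V_{K,k,b}$ kills $\Xi(\nabla\mathbf{p})$ because one may choose the test function with coefficients equal to the $p_{i,j}$ themselves; here the test space $V_{K,k,b}^{*}$ is a proper subspace, and orthogonality to it does \emph{not} imply $\Xi(\nabla\mathbf{p})=0$. Indeed, if the theorem holds, then the annihilator of $\operatorname{div}V_{\partial K,k,0}^{*}$ inside $P_{k-1}(K;\mathbb{R}^{3})$ has dimension $\dim Q(K)+12=16$, so there genuinely exist $\mathbf{p}$ with $\Xi(\nabla\mathbf{p})\neq 0$ that are orthogonal to all of $\operatorname{div}V_{\partial K,k,0}^{*}$; your conclusion can only follow by showing that the twelve vertex conditions $\mathbf{p}(\boldsymbol{x}_{i})=0$ are exactly transverse to this $12$-dimensional excess. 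Your sketch of how to do this --- two-dimensional weighted-orthogonal residuals per coefficient, a third coupling test, and ``edge-boundary relations'' from integrating against $\lambda_{a}^{2}\lambda_{b}$ --- ends with ``should force $p_{i,j}\equiv 0$,'' which is precisely the content of the theorem and is not established. As written this is a plan, not a proof.

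The paper takes a different and more robust route: a dimension count rather than a duality argument. For $k=3$ it computes $\operatorname{div}V_{K,3,b}^{*}$ explicitly, exhibits $14$ linearly independent divergences of the form $c(\lambda_{j}\lambda_{m}-\lambda_{l}\lambda_{m})\boldsymbol{n}_{i}$, and matches $\dim R_{2}^{\perp}(K)=14$. For general $k$ it introduces the auxiliary space $V_{3}$ spanned by the $\lambda_{j}\lambda_{l}^{2}\boldsymbol{n}_{i}\boldsymbol{t}_{j,l}^{T}$, proves the decomposition $V_{\partial K,k,0}=V_{3}+V_{\partial K,k,0}^{*}$, and then combines Theorem~4.2 with the $k=3$ case to bound $\dim(\operatorname{div}V_{\partial K,k,0}^{*})$ from below by $\dim Q_{k-1}^{\perp}(K)-12=\dim R_{k-1}^{\perp}(K)$. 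If you wish to salvage your approach you must carry out the transversality computation in full; otherwise the decomposition-plus-dimension-count is the cleaner path.
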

\begin{proof}
	It is obvious that $\operatorname{div} V_{\partial K,k,0}^{*} \subset R_{k-1}^{\perp}(K)$.\ To prove the converse,\ it suffices to show $\operatorname{dim}(\operatorname{div}V_{\partial K,k,0}^{*}) \geq \operatorname{dim}R_{k-1}^{\perp}(K)$.\ First consider the case $k=3$.\ Note that in this case,\ the bubble space $V_{K,3,b}^{*}$ has an explicit representation
	\begin{equation*}
	V_{K,3,b}^{*} = \sum_{\substack{0\leq i,j,l,m \leq 3\\ j\neq l\\ j,l \neq i,m}}P_{0}(K;\mathbb{R})\lambda_{j}\lambda_{l}
	\lambda_{m}\boldsymbol{n}_{i}\boldsymbol{t}_{j,l}^{T}.
	\end{equation*}
	Consider a function $\mathbf{v} = \lambda_{j}\lambda_{l}
	\lambda_{m}\boldsymbol{n}_{i}\boldsymbol{t}_{j,l}^{T} \in V_{K,3,b}^{*}$ with $j \neq l$ and $j,l \neq i,m$,\ it follows that
	\begin{equation*}
	\begin{split}
	\operatorname{div}\mathbf{v} = \operatorname{div}( \lambda_{j}\lambda_{l}
	\lambda_{m}\boldsymbol{n}_{i}\boldsymbol{t}_{j,l}^{T}) = \boldsymbol{n}_{i}\boldsymbol{t}_{j,l}^{T} \nabla(\lambda_{j}\lambda_{l}
	\lambda_{m})
	=  \boldsymbol{n}_{i}\boldsymbol{t}_{j,l}^{T} (\lambda_{j}\lambda_{l}
	\nabla \lambda_{m} \\+ \lambda_{j}\lambda_{m}
	\nabla \lambda_{l} + \lambda_{l}\lambda_{m}
	\nabla \lambda_{j}) =  \boldsymbol{n}_{i}\boldsymbol{t}_{j,l}^{T} ( \lambda_{j}\lambda_{m}
	\nabla \lambda_{l} + \lambda_{l}\lambda_{m}
	\nabla \lambda_{j}).
	\end{split}
	\end{equation*}
	Since $\mathbf{v}\in V_{\partial K,3,0}$,\ it implies that 
	\begin{equation}\label{eq 3.2.1}
	0 = \int_{\partial K}\mathbf{v} \boldsymbol{n} = \int_{K} \operatorname{div}\mathbf{v} = \int_{K}(\boldsymbol{t}_{j,l}^{T}  \nabla \lambda_{l})\lambda_{j}\lambda_{m}\boldsymbol{n}_{i}
	+ (\boldsymbol{t}_{j,l}^{T}
	\nabla \lambda_{j})\lambda_{l}\lambda_{m}\boldsymbol{n}_{i}.
	\end{equation}
	Since 
	$
	\nabla\lambda_{j} =- |\nabla\lambda_{j}|\boldsymbol{n}_{j},
	\nabla\lambda_{l} =- |\nabla\lambda_{l}|\boldsymbol{n}_{l},
	$
	\begin{equation*}
	\boldsymbol{t}_{j,l}^{T}\nabla\lambda_{j} = - \boldsymbol{t}_{j,l}^{T}\nabla\lambda_{l}.
	\end{equation*}
	Then by identity \eqref{eq 3.2.1},\ it follows that
	\begin{equation*}
	\operatorname{div} \mathbf{v} = \boldsymbol{n}_{i}\boldsymbol{t}_{j,l}^{T} ( \lambda_{j}\lambda_{m}
	\nabla \lambda_{l} + \lambda_{l}\lambda_{m}
	\nabla \lambda_{j}) = c(\lambda_{j}\lambda_{m}  - \lambda_{l}\lambda_{m})\boldsymbol{n}_{i}.
	\end{equation*}
	This expression implies that the divergence of the bubble space $V_{K,3,b}^{*}$ has a representation as follows:
	\begin{equation*}
	\operatorname{div}V_{K,3,b}^{*} = \sum_{\substack{0\leq i,j,l,m \leq 3\\ j\neq l\\ j,l \neq i,m}}P_{0}(K;\mathbb{R})(\lambda_{j}\lambda_{m}  - \lambda_{l}\lambda_{m})\boldsymbol{n}_{i}.
	\end{equation*}
	It can be claimed that the dimension of $\operatorname{div}V_{K,3,b}^{*}$ is 14.\ In fact,\ there are 14 vector-valued functions from the divergence space as follows:
	\begin{equation*}
	\begin{aligned}
	\{&\boldsymbol{n}_0(\lambda_1\lambda_0-\lambda_2\lambda_0),\boldsymbol{n}_0(\lambda_2\lambda_0-\lambda_3\lambda_0),\\
	\\
	&\boldsymbol{n}_1(\lambda_0\lambda_1-\lambda_2\lambda_1),\boldsymbol{n}_1(\lambda_2\lambda_1-\lambda_3\lambda_1),\\
	&\boldsymbol{n}_1(\lambda_2\lambda_0-\lambda_3\lambda_0),\boldsymbol{n}_1(\lambda_2\lambda_3-\lambda_0\lambda_3),\\
	\\
	&\boldsymbol{n}_2(\lambda_0\lambda_2-\lambda_1\lambda_2),\boldsymbol{n}_2(\lambda_1\lambda_2-\lambda_3\lambda_2),\\
	&\boldsymbol{n}_2(\lambda_0\lambda_1-\lambda_3\lambda_1),\boldsymbol{n}_2(\lambda_1\lambda_3-\lambda_0\lambda_3),\\
	\\
	&\boldsymbol{n}_3(\lambda_0\lambda_3-\lambda_1\lambda_3),\boldsymbol{n}_3(\lambda_1\lambda_3-\lambda_2\lambda_3),\\
	&\boldsymbol{n}_3(\lambda_0\lambda_1-\lambda_2\lambda_1),\boldsymbol{n}_3(\lambda_1\lambda_0-\lambda_2\lambda_0)\}.
	\end{aligned}
	\end{equation*}
	Since $\boldsymbol{n}_0 = a \boldsymbol{n}_1 + b\boldsymbol{n}_2 +c \boldsymbol{n}_3$ and $\boldsymbol{n}_1, \boldsymbol{n}_2,\boldsymbol{n}_3$ are linearly independent,\ these 14 vectors are linearly independent by a reduction argument.\ This deduces that $\operatorname{dim}(\operatorname{div}V_{\partial K,3,0}^{*}) \geq 14$.\ Note that the dimension of $R_{2}^{\perp}(K)$ is
	\begin{equation*}\operatorname{dim} R_{2}^{\perp}(K) = \operatorname{dim}Q_{2}^{\perp}(K)-12 = 14.
	\end{equation*}
	Hence $\operatorname{div} V_{\partial K,3,0}^{*} = R_{2}^{\perp}(K)$.
	
	Next consider the case $k \geq 3$.\ Define an auxiliary space
	\begin{equation*}
	V_{3} := \sum_{\substack{0 \leq i,j,l \leq 3\\ i \neq j,l\\ j\neq l}}P_0(K;\mathbb{R})\lambda_{j}\lambda_{l}^2\boldsymbol{n}_{i}\boldsymbol{t}_{j,l}^{T}.
	\end{equation*}
	For any
	\begin{equation*}
	\mathbf{v} = \sum_{i=0}^{3}\sum_{\substack{0\leq j<l\leq 3\\ j,l\neq i}}\lambda_{j}\lambda_{l}p_{i}^{j,l}\boldsymbol{n}_{i}\boldsymbol{t}_{j,l}^{T}\in V_{\partial K,k,0},\quad  p_{i}^{j,l}\in P_{k-2}(K;\mathbb{R}),
	\end{equation*}
	there exists a decomposition as follows:
	\begin{equation*}
	\begin{aligned}
	\mathbf{v} &= \sum_{i=0}^{3}\sum_{\substack{0\leq j<l\leq 3\\ j,l\neq i}}\lambda_{j}\lambda_{l}(p_{i}^{j,l} - p_{i}^{j,l}(\boldsymbol{x}_{j})\lambda_{j} - p_{i}^{j,l}(\boldsymbol{x}_{l})\lambda_{l}+ p_{i}^{j,l}(\boldsymbol{x}_{j})\lambda_{j} + p_{i}^{j,l}(\boldsymbol{x}_{l})\lambda_{l}) \boldsymbol{n}_{i}\boldsymbol{t}_{j,l}^{T}\\
	&= \sum_{i=0}^{3}\sum_{\substack{0\leq j<l\leq 3\\ j,l\neq i}}\lambda_{j}\lambda_{l}(p_{i}^{j,l} - p_{i}^{j,l}(\boldsymbol{x}_{j})\lambda_{j} - p_{i}^{j,l}(\boldsymbol{x}_{l})\lambda_{l}) \boldsymbol{n}_{i}\boldsymbol{t}_{j,l}^{T}\\
	&+ \sum_{i=0}^{3}\sum_{\substack{0\leq j<l\leq 3\\ j,l\neq i}}\lambda_{j}\lambda_{l}(  p_{i}^{j,l}(\boldsymbol{x}_{j})\lambda_{j} + p_{i}^{j,l}(\boldsymbol{x}_{l})\lambda_{l}) \boldsymbol{n}_{i}\boldsymbol{t}_{j,l}^{T}.
	\end{aligned}
	\end{equation*}
	Note that
	\begin{equation*}
	\sum_{i=0}^{3}\sum_{\substack{0\leq j<l\leq 3\\ j,l\neq i}}\lambda_{j}\lambda_{l}(p_{i}^{j,l} - p_{i}^{j,l}(\boldsymbol{x}_{j})\lambda_{j} - p_{i}^{j,l}(\boldsymbol{x}_{l})\lambda_{l}) \boldsymbol{n}_{i}\boldsymbol{t}_{j,l}^{T} \in V_{\partial K,k,0}^{*},
	\end{equation*}
	and
	\begin{equation*}
	\sum_{i=0}^{3}\sum_{\substack{0\leq j<l\leq 3\\ j,l\neq i}}\lambda_{j}\lambda_{l}(  p_{i}^{j,l}(\boldsymbol{x}_{j})\lambda_{j} + p_{i}^{j,l}(\boldsymbol{x}_{l})\lambda_{l}) \boldsymbol{n}_{i}\boldsymbol{t}_{j,l}^{T} \in V_3.
	\end{equation*}
	This implies that $V_{\partial K,k,0}\subset V_3 + V_{\partial K,k,0}^{*}$.\ Since $V_{\partial K,k,0}^{*}\subset V_{\partial K,k,0},V_3\subset V_{\partial K,k,0}$,\ this leads to
	\begin{equation*}
	V_{\partial K,k,0}= V_3 + V_{\partial K,k,0}^{*}.
	\end{equation*}
	By Theorem 4.2,\ it holds that
	\begin{equation*}
	\operatorname{div}V_3 + \operatorname{div}V_{\partial K,k,0}^{*} = \operatorname{div}V_{\partial K,k,0} = Q_{k-1}^{\perp}(K).
	\end{equation*}
	Therefore
	\begin{equation*}
	\operatorname{dim}(\operatorname{div}V_3) + \operatorname{dim}(\operatorname{div}V_{\partial K,k,0}^{*}) - \operatorname{dim}(\operatorname{div}V_3 \cap \operatorname{div}V_{\partial K,k,0}^{*}) = \operatorname{dim}Q_{k-1}^{\perp}(K).
	\end{equation*}
	Since it is proved that $\operatorname{div}V_{\partial K,3,0}^{*} = R_{2}^{\perp}(K)$,\ there holds that
	\begin{equation*}
	\begin{aligned}
	\operatorname{dim}(\operatorname{div}V_3) & = \operatorname{dim}Q_{2}^{\perp}(K) - \operatorname{dim}(\operatorname{div}V_{\partial K,3,0}^{*}) + \operatorname{dim}(\operatorname{div}V_3 \cap \operatorname{div}V_{\partial K,3,0}^{*}) \\
	& = \operatorname{dim}Q_{2}^{\perp}(K) - \operatorname{dim}R_{2}^{\perp}(K) + \operatorname{dim}(\operatorname{div}V_3 \cap \operatorname{div}V_{\partial K,3,0}^{*}) \\
	& = 12 + \operatorname{dim}(\operatorname{div}V_3 \cap \operatorname{div}V_{\partial K,3,0}^{*}).
	\end{aligned}
	\end{equation*}
	Note that $V_{\partial K,3,0}^{*}\subset V_{\partial K,k,0}^{*}$ for $k\geq 3$.\ Therefore
	\begin{equation*}
	\begin{aligned}
	\operatorname{dim}(\operatorname{div}V_{\partial K,k,0}^{*}) & = \operatorname{dim}Q_{k-1}^{\perp}(K) + \operatorname{dim}(\operatorname{div}V_3 \cap \operatorname{div}V_{\partial K,k,0}^{*}) - \operatorname{dim}(\operatorname{div}V_3)\\
	& \geq  \operatorname{dim}Q_{k-1}^{\perp}(K) + \operatorname{dim}(\operatorname{div}V_3 \cap \operatorname{div}V_{\partial K,3,0}^{*}) - \operatorname{dim}(\operatorname{div}V_3)\\
	&= \operatorname{dim}Q_{k-1}^{\perp}(K) - 12 = \operatorname{dim}R_{k-1}^{\perp}(K).
	\end{aligned}
	\end{equation*}
	Hence $\operatorname{div}V_{\partial K,k,0}^{*} = R_{k-1}^{\perp}(K)$,\ which completes the proof.
\end{proof}
\subsection{Degrees of freedom}
For $k \geq 3$,\ a unisolvent set of degrees of freedom of the space $V_{k,h}$ is locally given by
\begin{enumerate}
	\item [1.]derivatives of order $\leq 1$ for each component of $\mathbf{v}$ at each $\boldsymbol{x}\subset K$:
	\begin{equation*}
	D^{\alpha}\mathbf{v}(\boldsymbol{x}),\quad \forall |\alpha| \leq 1,
	\end{equation*}
	\item [2.]for each face $\boldsymbol{f}_{i}\subset \partial K(0 \leq i \leq 3)$,\ with the unit normal vector $\boldsymbol{n}_{i}$,\ the following moments of $\mathbf{v}$:
	\begin{equation*}
	\int_{\boldsymbol{f}_{i}}(\mathbf{v}\boldsymbol{n}_{i})\cdot \mathbf{q},\quad \forall \mathbf{q}\in P_{k}(\boldsymbol{f}_{i};\mathbb{R}^3),D^{\alpha}\mathbf{q}\text{ vanish at all vertices of }\boldsymbol{f}_{i},|\alpha| \leq 1,
	\end{equation*}
	\item [3.]the following moments of $\mathbf{v}$:
	\begin{equation*}
	\int_{K}\mathbf{v}:\mathbf{u},\quad \forall \mathbf{u}\in V_{\partial K,k,0}^{*}.
	\end{equation*}
\end{enumerate} 
\begin{theorem}
	Given any element $K$,\ the degrees of freedom are unisolvent for the shape function space $P_{k}(K;\mathbb{T})$.
\end{theorem}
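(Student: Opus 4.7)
The strategy is to first perform a dimension count verifying that the number of degrees of freedom equals $\dim P_k(K;\mathbb{T})=8\binom{k+3}{3}$, and then show that vanishing of all degrees of freedom on some $\mathbf{v}\in P_k(K;\mathbb{T})$ forces $\mathbf{v}\equiv 0$.

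For the dimension count, the contributions are: $4\cdot 4\cdot 8=128$ from the vertex derivatives of order $\le 1$ (four vertices, four scalar values per component from $D^{\alpha}$ with $|\alpha|\le 1$, eight components of $\mathbb{T}$); $4\cdot 3\cdot[\binom{k+2}{2}-9]=6(k+1)(k+2)-108$ from the face moments (three scalar components of $\mathbf{v}\boldsymbol{n}_i$ tested against a polynomial on a $2$D face, minus nine vanishing conditions coming from the value plus two tangential derivatives at each of the three vertices); and $\dim V_{\partial K,k,0}^{*}=(4k^3+6k^2-10k-72)/3$ from the interior moments. Summing and collecting yields $4(k+1)(k+2)(k+3)/3=8\binom{k+3}{3}=\dim P_k(K;\mathbb{T})$. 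As a sanity check at $k=3$, the total is $128+12+20=160=8\cdot 20$.

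For unisolvence, suppose all degrees of freedom vanish on some $\mathbf{v}\in P_k(K;\mathbb{T})$, and proceed by peeling in three steps. First, the vertex degrees of freedom (1) immediately force $\mathbf{v}(\boldsymbol{x}_i)=0$ and $\nabla\mathbf{v}(\boldsymbol{x}_i)=0$ for $0\le i\le 3$. Second, on each face $\boldsymbol{f}_i$ take the test function $\mathbf{q}:=\mathbf{v}\boldsymbol{n}_i\in P_k(\boldsymbol{f}_i;\mathbb{R}^3)$; since $\boldsymbol{n}_i$ is constant and $\nabla\mathbf{v}$ vanishes at the vertices, all tangential derivatives of $\mathbf{q}$ of order $\le 1$ vanish at the three vertices of $\boldsymbol{f}_i$, so $\mathbf{q}$ is admissible in (2). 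Substituting gives $\int_{\boldsymbol{f}_i}|\mathbf{v}\boldsymbol{n}_i|^2=0$, whence $\mathbf{v}\boldsymbol{n}_i\equiv 0$ on $\boldsymbol{f}_i$. Therefore $\mathbf{v}\in V_{\partial K,k,0}$, and combined with the vertex condition $\nabla\mathbf{v}(\boldsymbol{x}_i)=0$ this upgrades to $\mathbf{v}\in V_{\partial K,k,0}^{*}$. Third, the choice $\mathbf{u}=\mathbf{v}$ in (3) is now legitimate and yields $\int_K\mathbf{v}:\mathbf{v}=0$, hence $\mathbf{v}\equiv 0$.

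The main technical obstacle I expect is the dimension accounting: the face contribution hinges on correctly identifying the codimension-$9$ subspace carved out by the vertex constraints on $\mathbf{q}$, and it must be combined with the nontrivial formula for $\dim V_{\partial K,k,0}^{*}$ from the previous subsection before the terms telescope into $8\binom{k+3}{3}$. In contrast, the vanishing argument is clean because the face and interior test spaces were engineered precisely to match the boundary trace and vertex data carried by a generic element of $P_k(K;\mathbb{T})$, so the substitutions $\mathbf{q}=\mathbf{v}\boldsymbol{n}_i$ and $\mathbf{u}=\mathbf{v}$ succeed immediately.
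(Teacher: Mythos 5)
Your proposal is correct and follows essentially the same route as the paper: the identical dimension count ($128$ vertex functionals, $12\bigl[\binom{k+2}{2}-9\bigr]$ face moments, $\dim V_{\partial K,k,0}^{*}$ interior moments, summing to $\dim P_k(K;\mathbb{T})$) followed by the same three-stage peeling (vertices, then $\mathbf{v}\boldsymbol{n}=0$ on $\partial K$ so that $\mathbf{v}\in V_{\partial K,k,0}^{*}$, then the interior moments). Your version merely makes explicit the admissible test functions $\mathbf{q}=\mathbf{v}\boldsymbol{n}_i$ and $\mathbf{u}=\mathbf{v}$ that the paper leaves implicit.
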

\begin{proof}
	The number of local degrees of freedom is
	\begin{equation*}
	\begin{aligned}
	&8\times 4\times 4 + 4\times 3\times  [\frac{(k+2)(k+1)}{2}-9] + (4k^3+6k^2-10k-72)/3 \\
	=&(4k^3+24k^2+44k+24)/3 = \operatorname{dim}P_{k}(K;\mathbb{T}).
	\end{aligned}
	\end{equation*}
	
	Then for any $\mathbf{v}\in P_{k}(K;\mathbb{T})$,\ it suffices to show $\mathbf{v} = 0$ if all degrees of freedom vanish.\ From the first set of degrees of freedom,\ the derivatives of $\mathbf{v}$ of order $\leq 1$ vanish at all vertices of element $K$.\ Then the second set of degrees of freedom show that $\mathbf{v} \boldsymbol{n} = 0$ on $\partial K$,\ and consequently $\mathbf{v}\in V_{\partial K,k,0}^{*}$.\ The third set of degrees of freedom imply that $\mathbf{v}=0$.\ This proves the unisolvence.
\end{proof}
\section{The discrete Gradgrad-complex}
This section,\ derives and studies the following discrete Gradgrad-complex\eqref{Discrete complex}
\begin{equation*}
P_1(\Omega)\stackrel{\subset}{\longrightarrow}U_{h} \stackrel{\operatorname{gradgrad}}{\longrightarrow}\Sigma_{h} \stackrel{\operatorname{curl}}{\longrightarrow}V_{h}\stackrel{\operatorname{div}}{\longrightarrow}Q_{h}\stackrel{}{\longrightarrow}0
\end{equation*}
with conforming finite element spaces $U_{h} \subset H^2(\Omega;\mathbb{R}),\Sigma_{h}\subset H(\operatorname{curl},\Omega;\mathbb{S}),V_{h}\subset H(\operatorname{div},\Omega;\mathbb{T})$ and $Q_{h}\subset L^2(\Omega;\mathbb{R}^{3})$,\ the exactness of discrete complex will be proved below.

For $k \geq 5$,\ the space $\Sigma_{h}$ is taken as the $H(\operatorname{curl})$ finite element space $\Sigma_{k,h}$ defined above,\ and the $H(\operatorname{div})$ finite element space $V_{h}$ is taken as $V_{k-1,h}$ defined above with degree $k-1$.
\subsection{$H^2$-conforming finite element space}
The space $U_{h}$ is taken as the $H^2$-conforming finite element space defined by Zhang\cite{MR2474112},\ which was first proposed by \v{Z}en\'{\i}\v{s}ek\cite{MR350260}.\ This $C^1$ finite element space consisting of piecewise polynomials of degree $k+2$ with $k \geq 7$,\ which are $C^4$ at the vertices and $C^2$ on edges.\ A unisolvent set of degrees of freedom is given as follows(cf.\cite{MR2474112}):
\begin{enumerate}
	\item [1.]derivatives of $u$ of order $\leq 4$ at each vertex $\boldsymbol{x}\subset K$:
	\begin{equation*}
	D^{\alpha}u(\boldsymbol{x}),\quad \forall |\alpha| \leq 4,
	\end{equation*}
	\item [2.]for each edge $\boldsymbol{e}\subset K$,\ denote by $\boldsymbol{n}$ and $\boldsymbol{m}$ two independent vectors orthogonal to the edge.\ The moments of $u$ of order $\leq k-8$ on $\boldsymbol{e}$,\ moments of the first normal derivatives of $u$ with respect to $\{u_{\boldsymbol{n}},u_{\boldsymbol{m}}\}$ of order $\leq k-7$ on $\boldsymbol{e}$,\ moments of second normal derivatives of $u$ with respect to  $\{u_{\boldsymbol{n}\boldsymbol{n}},u_{\boldsymbol{n}\boldsymbol{m}},u_{\boldsymbol{m}\boldsymbol{m}}\}$ of order $\leq k-6$ on $\boldsymbol{e}$:
	\begin{equation*}
	\int_{\boldsymbol{e}}uq,\quad \forall q\in P_{k-8}(\boldsymbol{e};\mathbb{R}),
	\end{equation*}
	\begin{equation*}
	\int_{\boldsymbol{e}} u_{\boldsymbol{n}}q,\quad \int_{\boldsymbol{e}} u_{\boldsymbol{m}}q,\quad \forall q\in P_{k-7}(\boldsymbol{e};\mathbb{R}),
	\end{equation*}
	\begin{equation*}
	\int_{\boldsymbol{e}}u_{\boldsymbol{n}\boldsymbol{n}}q,\quad \int_{\boldsymbol{e}}u_{\boldsymbol{n}\boldsymbol{m}}q,\int_{\boldsymbol{e}}u_{\boldsymbol{m}\boldsymbol{m}}q,\quad \forall q \in P_{k-6}(\boldsymbol{e};\mathbb{R}),
	\end{equation*}
	in the case $k = 7$, the moments of $u$ on edge is omitted,
	\item [3.]for each face $\boldsymbol{f}\subset \partial K$,\ with the unit normal vector $\boldsymbol{n}$.\ The moments of $u$ of order $\leq k-7$ on $\boldsymbol{f}$,\ and moments of normal derivative $u_{\boldsymbol{n}}$ of order $\leq k-5$ on $\boldsymbol{f}$:
	\begin{equation*}
	\int_{\boldsymbol{f}}uq,\quad \forall q\in P_{k-7}(\boldsymbol{f};\mathbb{R}),
	\end{equation*}
	\begin{equation*}
	\int_{\boldsymbol{f}}u_{\boldsymbol{n}}q,\quad \forall q\in P_{k-5}(\boldsymbol{f};\mathbb{R}),
	\end{equation*}
	\item [4.]moments of $u$ of order $\leq k-6$ on $K$:
	\begin{equation*}
	\int_{K}uq,\quad \forall q\in P_{k-6}(K;\mathbb{R}).
	\end{equation*}
\end{enumerate}
\subsection{$L^2$-conforming finite element space}
The space $Q_{h}$ is taken as the space $Q_{k-2,h}$ consisting of vector-valued piecewise polynomials of degree $k-2$,\ which are continuous at the vertices.\ A unisolvent set of degrees of freedom is given by
\begin{enumerate}
	\item [1.]function value for each component of $\mathbf{q}$ at each vertex $\boldsymbol{x}\subset K$:
	\begin{equation*}
	\mathbf{q}(\boldsymbol{x}),
	\end{equation*}
	\item [2.]the following moments of $\mathbf{q}$:
	\begin{equation*}
	\int_{K}\mathbf{q}\cdot\mathbf{u},\quad \forall \mathbf{u}\in P_{k-2}(K;\mathbb{R}^3),\mathbf{u}\text{ vanishes at the vertices of }K.
	\end{equation*}
\end{enumerate}
It is obvious that these degrees of freedom uniquely determine a function of $Q_{k-2,h}$ on each element $K$.
\subsection{discrete complex}
Following from the definition of spaces,\ it holds that $\operatorname{gradgrad} U_{h}\subset \Sigma_{h},\operatorname{curl}\Sigma_{h}\subset V_{h}$ and $\operatorname{div}V_{h}\subset Q_{h}$.\ Next it will be shown that this complex is exact.

For any $\sigma\in\Sigma_{k,h}$ satisfying $\operatorname{curl}\sigma=0$ with $k \geq 7$,\ there exists $u\in H^2(\Omega;\mathbb{R})$ such that $\sigma = \operatorname{grad}\operatorname{grad}u$.\ Since $\sigma$ is a matrix-valued piecewise polynomial of degree $\leq k$,\ $u$ is a piecewise polynomial of degree $\leq k+2$.\ It follows from the definition of $\Sigma_{k,h}$ that $u$ is $C^4$ at all vertices and $C^2$ on edges,\ it implies that $u\in U_{h}$.

The next theorem shows that the divergence operator $\operatorname{div}:V_{k-1,h}\rightarrow Q_{k-2,h}$ is onto.
\begin{theorem}
	For any $\mathbf{q}_{h}\in Q_{k-1,h}$ with $k \geq 4$,\ there is a $\mathbf{v}_{h}\in V_{k,h}$ such that $\operatorname{div}\mathbf{v}_{h} = \mathbf{q}_{h}$ and $\|\mathbf{v}_{h}\|_{H(\operatorname{div},\Omega)}\leq C\|\mathbf{q}_{h}\|_{0,\Omega}$.
\end{theorem}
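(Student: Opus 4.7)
The plan is to build $\mathbf{v}_h$ by adding a Fortin-type continuous piece to a local bubble correction, using Theorem 4.3 as the key local surjectivity tool. First, since the continuous complex \eqref{Complex1} is exact at $L^2$, for any $\mathbf{q}_h\in Q_{k-1,h}\subset L^2(\Omega;\mathbb{R}^3)$ there is $\mathbf{w}\in H(\operatorname{div},\Omega;\mathbb{T})$ with $\operatorname{div}\mathbf{w}=\mathbf{q}_h$ and $\|\mathbf{w}\|_{H(\operatorname{div})}\leq C\|\mathbf{q}_h\|_0$. Because the nodal interpolation into $\widetilde{V}_{k,h}$ requires pointwise values and first derivatives at vertices, I would regularize $\mathbf{w}$ through a divergence-preserving smoothing (for instance, via a regular decomposition combined with a mollifier near the skeleton) to obtain $\mathbf{w}^{*}$ of sufficient regularity with $\operatorname{div}\mathbf{w}^{*}=\mathbf{q}_h$ and a stable bound $\|\mathbf{w}^{*}\|_{s}\leq C\|\mathbf{q}_h\|_0$ for $s>5/2$.

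Next I would construct $\mathbf{v}_c\in\widetilde{V}_{k,h}$ by prescribing its degrees of freedom from $\mathbf{w}^{*}$ in a way designed so that the residual $\mathbf{r}_K:=\mathbf{q}_h|_K-\operatorname{div}\mathbf{v}_c|_K$ lies in the range $R_{k-1}^{\perp}(K)$ of Theorem 4.3 on every $K$. At each vertex $\boldsymbol{x}$, choose the Hermite vertex DOFs so that $\operatorname{div}\mathbf{v}_c(\boldsymbol{x})=\mathbf{q}_h(\boldsymbol{x})$; this is a linear condition that is solvable globally because the map from the vertex gradient DOFs of $\widetilde{V}_{k,h}$ at $\boldsymbol{x}$ onto $\mathbb{R}^3$ is surjective (and the vertex value of $\mathbf{q}_h$ is well defined since $\mathbf{q}_h$ is continuous at vertices by the definition of $Q_{k-1,h}$). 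For the orthogonality to $Q(K)$, observe that for $\mathbf{p}\in Q(K)$ one has $\Xi(\nabla\mathbf{p})=0$, so an integration by parts gives
\begin{equation*}
\int_K\operatorname{div}\mathbf{v}_c\cdot\mathbf{p}=\int_{\partial K}(\mathbf{v}_c\boldsymbol{n})\cdot\mathbf{p},\qquad \int_K\mathbf{q}_h\cdot\mathbf{p}=\int_K\operatorname{div}\mathbf{w}^{*}\cdot\mathbf{p}=\int_{\partial K}(\mathbf{w}^{*}\boldsymbol{n})\cdot\mathbf{p}.
\end{equation*}
Hence it suffices to select the face normal moments of $\mathbf{v}_c$ so that $\int_{\boldsymbol{f}_i}(\mathbf{v}_c\boldsymbol{n}_i)\cdot\mathbf{q}=\int_{\boldsymbol{f}_i}(\mathbf{w}^{*}\boldsymbol{n}_i)\cdot\mathbf{q}$ for the face test functions in the second DOF set of $V_{k,h}$.

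With such $\mathbf{v}_c$, the residual $\mathbf{r}_K$ lies in $R_{k-1}^{\perp}(K)$ on each $K$, and Theorem 4.3 provides $\mathbf{v}_b|_K\in V_{\partial K,k,0}^{*}$ with $\operatorname{div}\mathbf{v}_b|_K=\mathbf{r}_K$; the bubbles assemble globally into $\mathbf{v}_b\in V_{k,h}$ since they and their gradients vanish at vertices and $\mathbf{v}_b\boldsymbol{n}=0$ on element faces. Setting $\mathbf{v}_h:=\mathbf{v}_c+\mathbf{v}_b\in V_{k,h}$ yields $\operatorname{div}\mathbf{v}_h=\mathbf{q}_h$. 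A scaling/inverse estimate on each $K$ gives $\|\mathbf{v}_b\|_{H(\operatorname{div},K)}\leq C\|\mathbf{r}_K\|_{0,K}$, while the standard bound for the nodal interpolation gives $\|\mathbf{v}_c\|_{1,K}\leq C\|\mathbf{w}^{*}\|_{s,K}$. Summing and combining with $\|\mathbf{w}^{*}\|_s\leq C\|\mathbf{q}_h\|_0$ proves the desired stability.

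The principal obstacle I expect is the vertex-matching step together with its stability: the vertex values of $\operatorname{div}\mathbf{v}_c$ must be driven by a function ($\mathbf{w}^{*}$) whose pointwise values are controllable by $\|\mathbf{q}_h\|_0$, which forces the use of a divergence-preserving regularization with stable higher-order norms. Once this regularized lifting is in hand, the reduction of $\mathbf{r}_K$ into $R_{k-1}^{\perp}(K)$ via the face DOFs and the application of Theorem 4.3 are essentially algebraic.
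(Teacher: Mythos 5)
Your overall architecture matches the paper's three-step construction: lift $\mathbf{q}_h$ to a continuous field, build a conforming discrete function whose divergence matches $\mathbf{q}_h$ at the vertices and in the moments against $Q(K)$, and then kill the residual elementwise using the characterization $\operatorname{div}V_{\partial K,k,0}^{*}=R_{k-1}^{\perp}(K)$ (that is Theorem~4.4, not~4.3, but the substance is what you intend). The vertex-matching idea, the integration-by-parts reduction to face normal moments against $Q(K)$, and the assembly of the bubble corrections are all essentially the paper's argument.

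The genuine gap is the regularization step you lean on, and it cannot be repaired as stated. A field $\mathbf{w}^{*}\in H^{s}(\Omega;\mathbb{T})$ with $s>5/2$ has $\operatorname{div}\mathbf{w}^{*}\in H^{s-1}$ with $s-1>3/2$, hence continuous; but $\mathbf{q}_h\in Q_{k-1,h}$ is in general discontinuous across interelement faces, so no such divergence-preserving smoothing exists. Even setting aside continuity, the uniform bound $\|\mathbf{w}^{*}\|_{s}\leq C\|\mathbf{q}_h\|_{0}$ together with $\operatorname{div}\mathbf{w}^{*}=\mathbf{q}_h$ would force $\|\mathbf{q}_h\|_{s-1}\leq C\|\mathbf{q}_h\|_{0}$ with a constant independent of $h$, which contradicts inverse estimates. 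The paper sidesteps this entirely: it takes the lifting only in $H^{1}(\Omega;\mathbb{T})$ (citing a stability result, not mere exactness at $L^2$), uses the Scott--Zhang interpolant for the vertex values (which requires only $H^1$, no pointwise evaluation), prescribes the vertex \emph{gradient} degrees of freedom directly from $\mathbf{q}_h$ itself --- well defined and stable because $\mathbf{q}_h$ is a piecewise polynomial continuous at vertices --- and matches the face moments against $Q(K)\vert_{\boldsymbol{f}}$ using only the $L^2(\boldsymbol{f})$ trace of the $H^1$ lifting, via an explicit face-bubble correction $\delta_h$ whose gradient vanishes at vertices so the vertex conditions are not disturbed. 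That last point also papers over a smaller imprecision in your write-up: the test functions $\mathbf{p}\in Q(K)\vert_{\boldsymbol{f}}$ do not vanish at the face vertices, so they are not covered by the second set of degrees of freedom of $V_{k,h}$; one needs a separate correction built from face bubbles (as the paper does with $\phi_l T_{i,j}$) rather than a direct prescription of those DOFs.
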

\begin{proof}
	Given $\mathbf{q}_{h} = (q_1,q_2,q_3)^{T}\in Q_{k-1,h}$,\ there exists $\mathbf{v}\in H^1(\Omega;\mathbb{T})$ such that $\operatorname{div}\mathbf{v} = \mathbf{q}_{h}$ and $\|\mathbf{v}\|_{1,\Omega} \leq C\|\mathbf{q}_{h}\|_{0,\Omega}$\cite{MR4113080}.\ Let $I_{h}$ denote the Scott-Zhang interpolation operator which satisfies that\cite{MR1011446}
	\begin{equation*}
	\|\mathbf{v}-I_{h}\mathbf{v}\|_{0,\Omega} + h\|\nabla I_{h}\mathbf{v}\|_{0,\Omega} \leq Ch\|\nabla\mathbf{v}\|_{0,\Omega}.
	\end{equation*}
	The construction of $\mathbf{v}_{h}$ will be finished in three steps.\ The first step defines $\mathbf{v}_1 = (v_{i,j})_{3\times 3}\in V_{k,h}$ such that
	\begin{itemize}
		\item [1.]for each vertex $\boldsymbol{x}\in\mathscr{V}$
		\begin{equation*}
		\mathbf{v}_1(\boldsymbol{x}) = I_{h}\mathbf{v}(\boldsymbol{x}),\quad \frac{\partial v_{i,i}}{\partial x_{j}}(\boldsymbol{x}) = 0,\quad \frac{\partial v_{i,j}}{\partial x_{j}}(\boldsymbol{x}) = \frac{1}{2}q_{i}(x)(i \neq j),\quad \frac{\partial v_{i,j}}{\partial x_{k}}(\boldsymbol{x}) = 0(i ,k\neq j),
		\end{equation*}
		\item [2.]for each face $\boldsymbol{f}\in \mathscr{F}$ with the unit normal vector $\boldsymbol{n}$
		\begin{equation*}
		\int_{\boldsymbol{f}}(\mathbf{v}_1-I_{h}\mathbf{v})\boldsymbol{n}\cdot \mathbf{q} = 0,\quad \forall \mathbf{q}\in P_{k}(\boldsymbol{f};\mathbb{R}^3),\text{ with }D^{\alpha}\mathbf{q}\text{ vanishing at all vertices of }\boldsymbol{f},|\alpha| \leq 1,
		\end{equation*}
		\item [3.]
		for any element $K\in \mathscr{T}$		
		\begin{equation*}
		\int_{K}\mathbf{v}_1:\mathbf{u}=  \int_{K}I_{h}\mathbf{v}:\mathbf{u},\quad \forall\mathbf{u}\in V_{\partial K,k,0}^{*}.
		\end{equation*}
	\end{itemize}
	
	Since $k\geq 4$,\ for each face $\boldsymbol{f}\in\mathscr{F}$ of element $K$,\ there are $\frac{(k-1)(k-2)}{2} \geq 3$ Hermite type basis functions $\phi_{l}\in \{p \in H^1(\Omega;\mathbb{R}),p\text{ is }C^1\text{ at } \mathscr{V},p\big |_{K}\in P_{k}(K;\mathbb{R}),\forall K\in \mathcal{T}_{h}\},1 \leq l \leq \frac{(k-1)(k-2)}{2}$, such that $\phi_{l}$ vanish on $\partial(K^{+}\cup K^{-})$ and $\nabla\phi_{l}$ vanish at the vertices of $K^{+}$ and $K^{-}$,\ where $K^{+}$ and $K^{-}$ are two elements that share the face $\boldsymbol{f}$.\ Then $\phi_{l}T_{i,j},0\leq i\leq3,1\leq j \leq 2,1 \leq l \leq \frac{(k-1)(k-2)}{2}$ are matrix-valued functions,\ which are linearly independent.\ These bubble functions allow us to define a correction $\delta_{h}\in V_{k,h}$ such that 
	\begin{equation*}
	\int_{\boldsymbol{f}}\delta_{h}\boldsymbol{n}\cdot \mathbf{q} = \int_{\boldsymbol{f}}(\mathbf{v}-\mathbf{v}_1)\boldsymbol{n}\cdot \mathbf{q},\quad \forall \mathbf{q}\in Q(K)|_{\boldsymbol{f}}.
	\end{equation*}
	Then the second step defines
	\begin{equation*}
	\widetilde{\mathbf{v}}_1 = \mathbf{v}_1 +\delta_{h}.
	\end{equation*}
	By the construction,\ $\operatorname{div}\widetilde{\mathbf{v}}_1(\boldsymbol{x})=\mathbf{q}_{h}(\boldsymbol{x})$ at all of the vertices.\ On each element $K$,\ an integration by parts,\ for any $\mathbf{q}\in Q(K)$,\ yields
	\begin{equation*}
	\begin{aligned}
	\int_{K}(\operatorname{div}\widetilde{\mathbf{v}}_1-\mathbf{q}_{h})\cdot\mathbf{q} & = \int_{K}(\operatorname{div}\widetilde{\mathbf{v}}_1-\operatorname{div}\mathbf{v})\cdot\mathbf{q}\\
	& = \int_{\partial K}(\widetilde{\mathbf{v}}_1-\mathbf{v})\boldsymbol{n}\cdot\mathbf{q} = 0.
	\end{aligned}
	\end{equation*}
	This implies that $(\operatorname{div}\widetilde{\mathbf{v}}_1-\mathbf{q}_{h})|_{K} \in R_{k-1}^{\perp}(K)$.\ It follows from Theorem 4.4 that there exists a $\mathbf{v}_2\in V_{k,h}$ such that $\mathbf{v}_2|_{K}\in V_{\partial K,k,0}^{*}$ and 
	\begin{equation*}
	\operatorname{div}\mathbf{v}_2 = \mathbf{q}_{h}-\operatorname{div}\widetilde{\mathbf{v}}_1,\quad \|\mathbf{v}_2\|_{0,\Omega} = \min\{\|\mathbf{v}\|_{0,\Omega},\operatorname{div}\mathbf{v} = \mathbf{q}_{h}-\operatorname{div}\widetilde{\mathbf{v}}_1,\mathbf{v}\in V_{k,h}\}.
	\end{equation*}
	Note that $\|\operatorname{div}\mathbf{v}_2\|_{0,\Omega}$ defines a norm of $\mathbf{v}_2$.\ Then a scaling argument shows that
	\begin{equation*}
	\|\mathbf{v}_2\|_{H(\operatorname{div},\Omega)} \leq C\|\operatorname{div}\mathbf{v}_2\|_{0,\Omega}.
	\end{equation*}
	The third step defines $\mathbf{v}_{h} = \widetilde{\mathbf{v}}_1+\mathbf{v}_2$.\ It follows that $\operatorname{div}\mathbf{v}_{h} = \mathbf{q}_{h}$ and 
	\begin{equation*}
	\|\mathbf{v}_{h}\|_{H(\operatorname{div},\Omega)} \leq \|\widetilde{\mathbf{v}}_{1}\|_{H(\operatorname{div},\Omega)} + \|\mathbf{v}_{2}\|_{H(\operatorname{div},\Omega)}\leq C(\|\widetilde{\mathbf{v}}_{1}\|_{H(\operatorname{div},\Omega)} + \|\mathbf{q}_{h}-\operatorname{div}\widetilde{\mathbf{v}}_1\|_{0,\Omega}).
	\end{equation*}
	For each element $K\in \mathcal{T}_{h}$,\ let $\omega(K)=\sum_{K^{'}\in\mathcal{T}_{h}, \overline{K^{'}}\cap \overline{K}\neq \varnothing} K^{'}$ denote the patch of $K$.\ A standard scaling argument and the trace theory,\ lead to
	\begin{equation*}
	\|\mathbf{v}_1-I_{h}\mathbf{v}\|_{1,K} \leq C(\|\mathbf{v}\|_{1,\omega
		(K)} + \|\mathbf{q}_{h}\|_{0,K}),
	\end{equation*}
	\begin{equation*}
	\|\delta_{h}\|_{1,K} \leq C\|\mathbf{v}-\mathbf{v}_1\|_{1,K},
	\end{equation*}
	this implies that $\|\widetilde{\mathbf{v}}_1\|_{H(\operatorname{div},\Omega)} \leq C\|\mathbf{q}_{h}\|_{0,\Omega}$,\ and consequently $\|\mathbf{v}_{h}\|_{H(\operatorname{div},\Omega)}\leq C\|q_{h}\|_{0,\Omega}$.
\end{proof}

The exactness of the discrete complex \eqref{Discrete complex} is proved in the following theorem.
\begin{theorem}
	The discrete complex \eqref{Discrete complex} is exact with $k \geq 7$.
\end{theorem}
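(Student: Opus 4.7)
The plan is to reduce exactness at every node of \eqref{Discrete complex} to a single Euler-characteristic identity for the four global dimensions, and then verify that identity by counting degrees of freedom. First, \eqref{Discrete complex} is a sub-complex of \eqref{Complex1}: the inclusion $\operatorname{gradgrad}U_h\subset \Sigma_h$ uses only the symmetry of the Hessian and $U_h\subset H^2$; $\operatorname{curl}\Sigma_h\subset V_h$ uses that the curl of a symmetric matrix is traceless and that the $C^2$-at-vertex regularity of $\Sigma_h$ translates into $C^1$-at-vertex regularity for $\operatorname{curl}\Sigma_h$; and $\operatorname{div}V_h\subset Q_h$ is analogous. The identities $\operatorname{curl}\circ\operatorname{gradgrad}=0$ and $\operatorname{div}\circ\operatorname{curl}=0$ then give the composition-is-zero conditions.

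Three of the four exactness statements are already established. Injectivity of $P_1(\Omega)\hookrightarrow U_h$ is obvious; exactness at $U_h$ (that $u\in U_h$ with $\operatorname{gradgrad}u=0$ lies in $P_1$) holds because such a $u$ is piecewise affine and globally $C^1$; exactness at $\Sigma_h$ is the content of the paragraph immediately preceding Theorem 5.1; and surjectivity $\operatorname{div}V_h=Q_h$ is Theorem 5.1. Only exactness at $V_h$ remains, namely $\ker(\operatorname{div}|_{V_h})=\operatorname{curl}\Sigma_h$, whose inclusion $\supset$ is immediate. For the reverse inclusion I apply the rank--nullity theorem. Combining the three statements already in hand yields
\begin{align*}
\dim\operatorname{curl}\Sigma_h &= \dim\Sigma_h-\dim\operatorname{gradgrad}U_h=\dim\Sigma_h-\dim U_h+4,\\
\dim\ker(\operatorname{div}|_{V_h}) &= \dim V_h-\dim Q_h,
\end{align*}
so the missing inclusion reduces to the Euler-type identity
\begin{equation*}
\dim U_h-\dim \Sigma_h+\dim V_h-\dim Q_h=4.
\end{equation*}

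To verify this identity, I would express each global dimension as a sum over sub-simplices of the form $\alpha\,\mathcal{V}+\beta\,\mathcal{E}+\gamma\,\mathcal{F}+\delta\,\mathcal{T}$ using the local DOFs tabulated in Sections 3--5 and the prescribed regularity at vertices and edges. For the vertex contributions, the alternating sum already comes out to $35-60+32-3=4$ per vertex, corresponding respectively to $C^4$ scalar, $C^2$ symmetric, $C^1$ traceless, and $C^0$ vector fields; the $k$-dependent edge, face, and interior contributions, which arise in parallel across the four spaces from tensor targets of dimensions $1$, $6$, $8$, $3$, should collapse under the alternating sum to four-fold copies of the Euler relation $\mathcal{V}-\mathcal{E}+\mathcal{F}-\mathcal{T}=1$ valid on the contractible domain $\Omega$. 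The anticipated hard part is the meticulous bookkeeping: because the four spaces have distinct vertex and edge regularities and distinct tensor targets, and because some local degrees of freedom (for instance the face DOFs of $\Sigma_h$, which mix tangential--tangential and tangential--normal components) are not straightforward tensor products with a polynomial space on the face, the alternating sum requires careful combinatorics rather than a single formal computation.
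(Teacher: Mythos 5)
Your proposal is correct and follows essentially the same route as the paper: the paper likewise reduces the remaining exactness at $V_h$ to the Euler-characteristic identity $\dim U_h-\dim\Sigma_h+\dim V_h-\dim Q_h=4$ and verifies it by summing the global dimensions $35\mathcal{V}+(6k-34)\mathcal{E}+(k^2-9k+21)\mathcal{F}+\tfrac16(k-3)(k-4)(k-5)\mathcal{T}$, etc. The bookkeeping you defer does work out exactly as you anticipate: the edge, face, and tetrahedron contributions of the alternating sum come to $-4$, $+4$, $-4$ respectively, so the whole expression collapses to $4(\mathcal{V}-\mathcal{E}+\mathcal{F}-\mathcal{T})=4$.
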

\begin{proof}
	It suffices to check the dimension.\ The degrees of freedom of $U_{h}$ given above show that the global dimension of $U_{h}$ is 
	$35\mathcal{V} + (6k-34)\mathcal{E} + (k^2-9k+21)\mathcal{F} + \frac{1}{6}(k-3)(k-4)(k-5)\mathcal{T} $.\ Similarly,\ by the degrees of freedom defined above,\ the dimension of  $\Sigma_{h}$ is $60\mathcal{V} + 6(k-5)\mathcal{E} + \frac{5(k^2-3k-4)}{2}\mathcal{F} + (k^3-4k^2+5k-14)\mathcal{T}$,\ the dimension of $V_{h}$ is $32\mathcal{V} + \frac{3(k^2+k-18)}{2}\mathcal{F} +  \frac{4k^3-6k^2-10k-60}{3}\mathcal{T}$ and the dimension of $Q_{h}$ is $3\mathcal{V} + \frac{k^3-k-24}{2}\mathcal{T}$.
	
	By the Euler's formula $\mathcal{V}-\mathcal{E} +\mathcal{F}-\mathcal{T} =1$,\ it holds that
	\begin{equation*}
	\operatorname{dim}U_{h}-\operatorname{dim}\Sigma_{h}+\operatorname{dim}V_{h}-\operatorname{dim}Q_{h}  = 4 = \operatorname{dim}P_1(\Omega).
	\end{equation*}
	This completes the proof.
\end{proof}
\begin{remark}
	Consider another complex
	\begin{equation}\label{Complex2}
	H^2(\Omega;\mathbb{R}) \stackrel{\operatorname{gradgrad}}{\longrightarrow}H(\operatorname{curl},\Omega;\mathbb{S}) \stackrel{\operatorname{curl}}{\longrightarrow}L^2(\Omega;\mathbb{T}),
	\end{equation}
	which is closed but not exact.\ Define the full $C^{-1}$-$P_{k-1}$ space
	\begin{equation*}
	\widehat{V}_{h}:= \{\mathbf{v}\in L^2(\Omega;\mathbb{T}):\mathbf{v}|_{K}\in P_{k-1}(K;\mathbb{T}),\forall K\in\mathcal{T}_{h}\}.
	\end{equation*}
	There holds the following discrete sub-complex of \eqref{Complex2}
	\begin{equation*}
	U_{h} \stackrel{\operatorname{gradgrad}}{\longrightarrow}\Sigma_{h} \stackrel{\operatorname{curl}}{\longrightarrow}\widehat{V}_{h}.
	\end{equation*}	
\end{remark}
\section{Mixed Methods of The Linearized Einstein-Bianchi System with Strong Symmetry}
\subsection{Mixed methods}
This section considers the mixed approximation of the new formulation of the linearized Einstein-Bianchi system introduced in\cite{MR3450102}:\ Find
\begin{equation}\label{eq5.0}
\begin{aligned} & \sigma \in C^{0}([0, T], {H}^{2}(\Omega;\mathbb{R})) \cap C^{1}([0, T], L^{2}(\Omega;\mathbb{R})), \\ &\mathbf{E} \in C^{0}([0, T], {H}(\operatorname{curl},\Omega; \mathbb{S})) \cap C^{1}([0, T], L^{2}(\Omega;\mathbb{S})), \\ &\mathbf{B} \in  C^{1}([0, T], L^{2}(\Omega;\mathbb{T})), \end{aligned}
\end{equation}
such that
\begin{equation}\label{eq5.1}
\begin{cases} (\dot{\sigma},\tau) = (\mathbf{E},\operatorname{grad}\operatorname{grad}\tau),&\forall\tau\in {H}^2(\Omega;\mathbb{R}), \\ (\dot{\mathbf{\mathbf{E}}},\mathbf{u}) =-(\operatorname{grad} \operatorname{grad} \sigma,\mathbf{u})-  (\mathbf{B},\operatorname{curl}\mathbf{u}),& \forall \mathbf{u} \in {H}(\operatorname{curl},\Omega;\mathbb{S}), \\ (\dot{\mathbf{B}},\mathbf{v}) =(\operatorname{curl} \mathbf{E},\mathbf{v}),& \forall \mathbf{v} \in L^2(\Omega;\mathbb{T}), \end{cases}
\end{equation}
with given initial data 
$
(\sigma(0), \mathbf{E}(0), \mathbf{B}(0))\in {H}^{2}(\Omega;\mathbb{R}) \times {H}(\operatorname{curl},\Omega; \mathbb{S})  \times L^2(\Omega;\mathbb{T})
$.

For $k\geq 7$,\ the semidiscretization of (\ref{eq5.1}) is to find
$$\sigma_{h} \in C^{1}\left(\left[0, T_{0}\right] , U_{h}\right),\quad  \mathbf{E}_{h} \in C^{1}\left(\left[0, T_{0}\right] , \Sigma_{h}\right),\quad  \mathbf{B}_{h} \in C^{1}(\left[0, T_{0}\right] , \widehat{V}_{h}),$$ 
such that
\begin{equation*}
\begin{cases} (\dot{\sigma_{h}},\tau) = (\mathbf{E}_{h},\operatorname{grad}\operatorname{grad}\tau),& \forall\tau\in U_{h}, \\ (\dot{\mathbf{E}_{h}},\mathbf{u}) =-(\operatorname{grad} \operatorname{grad} \sigma_{h},\mathbf{u})-  (\mathbf{B}_{h},\operatorname{curl}\mathbf{u}),&\forall \mathbf{u} \in \Sigma_{h}, \\ (\dot{\mathbf{B}_{h}},\mathbf{v})  =(\operatorname{curl} \mathbf{E}_{h},\mathbf{v}),&\forall \mathbf{v} \in \widehat{V}_{h}, \end{cases}
\end{equation*}
for all time $t\in [0,T]$,\ with given initial data.
\begin{theorem}
	There exists a unique solution for the above semidiscrete system .
\end{theorem}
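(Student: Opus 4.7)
Since the three trial and test spaces $U_h$, $\Sigma_h$, and $\widehat V_h$ are finite dimensional, my plan is to recast the semidiscrete system as a homogeneous linear autonomous ODE on a finite-dimensional Euclidean space with an invertible mass matrix, and then invoke standard ODE theory.

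Concretely, I would fix bases $\{\phi_i\}_{i=1}^{N_U}$, $\{\psi_j\}_{j=1}^{N_\Sigma}$, $\{\chi_l\}_{l=1}^{N_V}$ of $U_h$, $\Sigma_h$, $\widehat V_h$, and expand $\sigma_h(t)=\sum_i\alpha_i(t)\phi_i$, $\mathbf E_h(t)=\sum_j\beta_j(t)\psi_j$, $\mathbf B_h(t)=\sum_l\gamma_l(t)\chi_l$. Testing the three variational identities in turn against each basis function reduces the semidiscrete problem to
$$M_U\dot\alpha=G\beta,\qquad M_\Sigma\dot\beta=-G^{T}\alpha-C^{T}\gamma,\qquad M_V\dot\gamma=C\beta,$$
where $M_U,M_\Sigma,M_V$ are the Gram matrices of the chosen bases with respect to the $L^2$ inner product, while $G_{ji}=(\psi_j,\operatorname{gradgrad}\phi_i)$ and $C_{lj}=(\chi_l,\operatorname{curl}\psi_j)$ are constant coupling matrices.

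The three mass matrices are symmetric positive definite, since they are Gram matrices of linearly independent vectors in an inner-product space; hence each is invertible. Setting $Y=(\alpha,\beta,\gamma)^{T}$, the system rewrites as the constant-coefficient homogeneous linear ODE $\dot Y=AY$ with
$$A=\begin{pmatrix} 0 & M_U^{-1}G & 0\\ -M_\Sigma^{-1}G^{T} & 0 & -M_\Sigma^{-1}C^{T}\\ 0 & M_V^{-1}C & 0 \end{pmatrix}.$$
The Picard--Lindel\"of theorem (or equivalently the explicit formula $Y(t)=e^{tA}Y(0)$) then supplies a unique $C^\infty$ solution on the whole interval $[0,T_0]$ for any prescribed initial data; unwinding the basis expansion yields the required $\sigma_h\in C^1([0,T_0],U_h)$, $\mathbf E_h\in C^1([0,T_0],\Sigma_h)$, $\mathbf B_h\in C^1([0,T_0],\widehat V_h)$.

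There is no substantive obstacle in this argument: the only point to verify is the invertibility of the mass matrices, which is immediate from linear independence of the chosen bases, and the linearity of the system rules out finite-time blow-up. If one wishes to expose the Hamiltonian-like structure of the scheme, testing the second and third equations with $\mathbf u=\mathbf E_h$ and $\mathbf v=\mathbf B_h$ and adding makes the mixed term $(\mathbf B_h,\operatorname{curl}\mathbf E_h)$ cancel to give $\tfrac12\tfrac{d}{dt}(\|\mathbf E_h\|_{0,\Omega}^2+\|\mathbf B_h\|_{0,\Omega}^2)=-(\operatorname{gradgrad}\sigma_h,\mathbf E_h)$, and a companion identity obtained from the first equation yields a conserved quadratic energy. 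This reconfirms global existence but is not strictly needed, the finite-dimensional linear ODE framework already being sufficient.
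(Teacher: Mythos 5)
Your proposal is correct and follows essentially the same route as the paper: both expand in bases, reduce the semidiscrete system to a constant-coefficient linear ODE with a block-diagonal, symmetric positive definite (hence invertible) mass matrix, and invoke standard ODE theory for existence and uniqueness. The energy-conservation remark is a nice optional addition but, as you note, not needed.
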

\begin{proof}
	Let $\{\psi_{i}\},\{\varphi_{i}\},\{\chi_{i}\}$ denote the bases of $U_{h},\Sigma_{h},\widehat{V}_{h}$,\ respectively.\ Then the variables $\sigma_{h},\mathbf{E}_{h}$ and $\mathbf{B}_{h}$ can be expressed as $\sigma_{h} = \sum_{i}\alpha_{i}(t)\psi_{i},\boldsymbol{E}_{h} = \sum_{i}\beta_{i}(t)\varphi_{i},\boldsymbol{B}_{h} = \sum_{i}\gamma_{i}(t)\chi_{i}$ with $\alpha_{i}(t),\beta_{i}(t)$ and $\gamma_{i}(t)$ are coefficient functions with respect to $t$.\ Let $\alpha,\beta,\gamma$ denote the corresponding vectors.\ Let $\mathscr{A},\mathscr{B},\mathscr{C},\mathscr{M},\mathscr{N}$ denote the matrices whose $(i,j)$-entries are
	\begin{equation*}
	(\psi_{j},\psi_{i}),\quad (\varphi_{j},\varphi_{i}),\quad (\chi_{j},\chi_{i}),\quad (\varphi_{j},\operatorname{grad}\operatorname{grad}\psi_{i}),\quad (\operatorname{curl}\varphi_{j},\chi_{i}),
	\end{equation*}
	respectively.\ Then the semidiscrete system can be written in a matrix equation form
	$$\left(\begin{array}{ccc}
	\mathscr{A} & 0 & 0 \\
	0 & \mathscr{B} & 0 \\
	0 & 0 & \mathscr{C}
	\end{array}\right)\left(\begin{array}{c}
	\dot{\alpha} \\
	\dot{\beta} \\
	\dot{\gamma}
	\end{array}\right)=\left(\begin{array}{ccc}
	0 & \mathscr{M} &  0\\
	-\mathscr{M}^{T} & 0 & -\mathscr{N}^{T} \\
	0 & \mathscr{N} & 0
	\end{array}\right)\left(\begin{array}{l}
	\alpha \\
	\beta \\
	\gamma
	\end{array}\right)$$
	The above system is a linear system of ordinary differential equations.\ Note that the coefficient matrix on the left-hand side is nonsingular.\ By the ODE theory,\ the equation is well-posed as an initial value problem,\ so there exists a unique solution.
\end{proof}
In order to get the convergence of the discrete solutions,\ consider the following elliptic problem.\ Let $\mathbf{V}_{h} = U_{h} \times  \Sigma_{h}\times \widehat{V}_{h}$ with norm 
\begin{equation*}
\|(\sigma,\mathbf{E},\mathbf{B})\|_{\mathbf{V}_{h}} = \|\sigma\|_{2,\Omega} + \|\mathbf{E}\|_{H(\operatorname{curl},\Omega)} +\|\mathbf{B}\|_{0,\Omega}.
\end{equation*}
Define the bilinear form $A : \mathbf{V}_{h} \times \mathbf{V}_{h} \rightarrow \mathbb{R}$ by
\begin{equation*}
\begin{array}{c}
A(\sigma,\mathbf{E},\mathbf{B};\tau,\mathbf{u},\mathbf{v}) = (\sigma,\tau) + (\mathbf{E},\mathbf{u}) + (\mathbf{B},\mathbf{v}) -(\mathbf{E},\operatorname{grad}\operatorname{grad}\tau)\\
+(\operatorname{grad} \operatorname{grad} \sigma,\mathbf{u})+ (\mathbf{B},\operatorname{curl}\mathbf{u}) - (\operatorname{curl} \mathbf{E},\mathbf{v}),
\end{array}
\end{equation*}
which is uniformly bounded independent of $h$.\ The next theorem and proof is adapted from\cite{MR3450102}.

\begin{theorem}
	The bilinear form $A(\cdot,\cdot,\cdot;\cdot,\cdot,\cdot)$ defined above satisfies the inf-sup condition:
	$$
	\inf _{0 \neq  (\sigma,\mathbf{E},\mathbf{B})\in \mathbf{V}_{h}} \sup _{0 \neq (\tau,\mathbf{u},\mathbf{v}) \in \mathbf{V}_{h}} \frac{A(\sigma,\mathbf{E},\mathbf{B};\tau,\mathbf{u},\mathbf{v} )}{\|(\sigma,\mathbf{E},\mathbf{B})\|_{\mathbf{V}_{h}}\|(\tau,\mathbf{u},\mathbf{v})\|_{\mathbf{V}_{h}}}=C >0 
	$$
	with constant $C$ independent of $h$.
\end{theorem}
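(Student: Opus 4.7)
The plan is to prove the inf-sup condition by constructing, for any given $(\sigma,\mathbf{E},\mathbf{B})\in\mathbf{V}_h$, an explicit test triple $(\tau,\mathbf{u},\mathbf{v})\in\mathbf{V}_h$ whose $\mathbf{V}_h$-norm is controlled by $\|(\sigma,\mathbf{E},\mathbf{B})\|_{\mathbf{V}_h}$ and for which $A(\sigma,\mathbf{E},\mathbf{B};\tau,\mathbf{u},\mathbf{v})$ bounds the full norm squared from below, both with $h$-independent constants. The diagonal choice $(\tau,\mathbf{u},\mathbf{v})=(\sigma,\mathbf{E},\mathbf{B})$ only yields $\|\sigma\|_0^2+\|\mathbf{E}\|_0^2+\|\mathbf{B}\|_0^2$, since all differential-operator cross-terms cancel in antisymmetric pairs. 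I would therefore perturb it by adding images along the complex,
\begin{equation*}
\tau=\sigma,\qquad \mathbf{u}=\mathbf{E}+\epsilon\operatorname{grad}\operatorname{grad}\sigma,\qquad \mathbf{v}=\mathbf{B}-\epsilon\operatorname{curl}\mathbf{E},
\end{equation*}
for a small but $h$-independent $\epsilon>0$. Admissibility of this triple in $\mathbf{V}_h$ is exactly the content of the inclusions $\operatorname{grad}\operatorname{grad}U_h\subset\Sigma_h$ and $\operatorname{curl}\Sigma_h\subset\widehat{V}_h$ guaranteed by the discrete sub-complex of Section~5 (using also that $\operatorname{curl}$ of a symmetric-matrix field is automatically traceless, so lands in $\widehat{V}_h$).

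Substituting this triple into $A$ and using the identity $\operatorname{curl}\operatorname{grad}\operatorname{grad}\sigma=0$ to kill one mixed term, a direct expansion collapses the remaining antisymmetric pairs and yields
\begin{equation*}
A(\sigma,\mathbf{E},\mathbf{B};\tau,\mathbf{u},\mathbf{v})=\|\sigma\|_0^2+\|\mathbf{E}\|_0^2+\|\mathbf{B}\|_0^2+\epsilon\|\operatorname{grad}\operatorname{grad}\sigma\|_0^2+\epsilon\|\operatorname{curl}\mathbf{E}\|_0^2+\epsilon(\mathbf{E},\operatorname{grad}\operatorname{grad}\sigma)-\epsilon(\mathbf{B},\operatorname{curl}\mathbf{E}).
\end{equation*}
Cauchy--Schwarz and Young's inequality applied to the last two terms, with weight of order $\epsilon$, absorb them into the positive terms; for $\epsilon$ sufficiently small (but independent of $h$) one obtains
\begin{equation*}
A(\sigma,\mathbf{E},\mathbf{B};\tau,\mathbf{u},\mathbf{v})\geq c\bigl(\|\sigma\|_0^2+\|\operatorname{grad}\operatorname{grad}\sigma\|_0^2+\|\mathbf{E}\|_{H(\operatorname{curl},\Omega)}^2+\|\mathbf{B}\|_0^2\bigr).
\end{equation*}

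To upgrade $\|\sigma\|_0^2+\|\operatorname{grad}\operatorname{grad}\sigma\|_0^2$ to full $\|\sigma\|_{2,\Omega}^2$ I would invoke the standard interpolation bound $\|\nabla\sigma\|_{0,\Omega}^2\leq C(\|\sigma\|_{0,\Omega}^2+\|\operatorname{grad}\operatorname{grad}\sigma\|_{0,\Omega}^2)$ on the bounded Lipschitz domain $\Omega$, after which the right-hand side dominates $\|(\sigma,\mathbf{E},\mathbf{B})\|_{\mathbf{V}_h}^2$. The triangle inequality gives the matching upper bound $\|(\tau,\mathbf{u},\mathbf{v})\|_{\mathbf{V}_h}\leq C\|(\sigma,\mathbf{E},\mathbf{B})\|_{\mathbf{V}_h}$ with $h$-independent constant, and dividing yields the inf-sup bound. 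There is no substantial analytic obstacle: the only conceptual step is to realize that the natural corrections $\operatorname{grad}\operatorname{grad}\sigma$ and $\operatorname{curl}\mathbf{E}$ remain admissible test functions precisely because the discrete spaces were designed to form the sub-complex \eqref{Discrete complex}, after which the bound reduces to elementary algebra and Young's inequality.
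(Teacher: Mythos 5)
Your proposal is correct and follows essentially the same route as the paper: the paper uses exactly the test triple $(\sigma,\mathbf{E}+\operatorname{grad}\operatorname{grad}\sigma,\mathbf{B}-\operatorname{curl}\mathbf{E})$ (i.e.\ your construction with $\epsilon=1$), expands $A$ the same way, and absorbs the two surviving cross terms by Cauchy--Schwarz to get the lower bound $\tfrac12(\|\sigma\|_{0,\Omega}^2+\|\mathbf{E}\|_{0,\Omega}^2+\|\mathbf{B}\|_{0,\Omega}^2+\|\operatorname{grad}\operatorname{grad}\sigma\|_{0,\Omega}^2+\|\operatorname{curl}\mathbf{E}\|_{0,\Omega}^2)$ before bounding the test-function norm. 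Your added remarks on admissibility via the discrete sub-complex and on recovering the full $\|\sigma\|_{2,\Omega}$ norm from $\|\sigma\|_{0,\Omega}$ and $\|\operatorname{grad}\operatorname{grad}\sigma\|_{0,\Omega}$ are points the paper leaves implicit, but the argument is the same.
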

\begin{proof}
	For any $(\sigma,\mathbf{E},\mathbf{B}) \in \mathbf{V}_{h}$,\ let $(\tau,\mathbf{u},\mathbf{v}) = (\sigma,\mathbf{E} + \operatorname{grad}\operatorname{grad}\sigma,\mathbf{B} - \operatorname{curl} \mathbf{E})$.\ It follows that
	\begin{equation*}
	\begin{aligned}
	A(\sigma,\mathbf{E},\mathbf{B};\tau,\mathbf{u},\mathbf{v} ) = &A(\sigma,\mathbf{E},\mathbf{B};\sigma,\mathbf{E},\mathbf{B}) + A(\sigma,\mathbf{E},\mathbf{B};0,\operatorname{grad}\operatorname{grad}\sigma,- \operatorname{curl} \mathbf{E})\\
	= &(\sigma,\sigma) +(\mathbf{E},\mathbf{E})+ (\mathbf{B},\mathbf{B}) + (\mathbf{E},\operatorname{grad}\operatorname{grad}\sigma) \\
	&+ (\operatorname{grad}\operatorname{grad}\sigma,\operatorname{grad}\operatorname{grad}\sigma)- (\mathbf{B},\operatorname{curl} \mathbf{E}) + (\operatorname{curl} \mathbf{E},\operatorname{curl} \mathbf{E})\\
	\geq & \frac{1}{2}(\|\sigma\|_{0,\Omega}^2 + \|\mathbf{E}\|_{0,\Omega}^2 + \|\mathbf{B}\|_{0,\Omega}^2 + \|\operatorname{grad}\operatorname{grad}\sigma\|_{0,\Omega}^2 + \|\operatorname{curl} \mathbf{E}\|^2_{0,\Omega} )\\
	\geq&C\|(\sigma,\mathbf{E},\mathbf{B})\|_{\mathbf{V}_{h}}^2.
	\end{aligned}
	\end{equation*}
	Since $\|(\sigma,\mathbf{E},\mathbf{B})\|_{\mathbf{V}_{h}} \geq C \|(\tau,\mathbf{u},\mathbf{v})\|_{\mathbf{V}_{h}}$,\ this implies that
	\begin{equation*}
	A(\sigma,\mathbf{E},\mathbf{B};\tau,\mathbf{u},\mathbf{v}) \geq C\|(\sigma,\mathbf{E},\mathbf{B})\|_{\mathbf{V}_{h}}\|(\tau,\mathbf{u},\mathbf{v})\|_{\mathbf{V}_{h}}.
	\end{equation*}
	This completes the proof.
\end{proof}

Next,\ the inf-sup condition will be used to analyze the well-posedness of the discrete problem.\ To this end,\ for any $(\sigma,\mathbf{E},\mathbf{B})\in {H}^{2}(\Omega;\mathbb{R}) \times {H}(\operatorname{curl},\Omega; \mathbb{S}) \times L^2(\Omega;\mathbb{T})$,\ define the elliptic projection $\Pi_{h}(\sigma,\mathbf{E},\mathbf{B})\in\mathbf{V}_{h}$ such that 
\begin{equation}\label{eq5.2}
A(\Pi_{h}\sigma,\Pi_{h}\mathbf{E},\Pi_{h}\mathbf{B};\tau,\mathbf{u},\mathbf{v}) = A(\sigma,\mathbf{E},\mathbf{B};\tau,\mathbf{u},\mathbf{v} ),\qquad \forall (\tau,\mathbf{u},\mathbf{v})\in \mathbf{V}_{h}.
\end{equation}
There holds the following quasioptimal error estimate:
\begin{equation}\label{eq5.1.1}
\begin{split}
\|\Pi_{h}\sigma-\sigma\|_{2,\Omega}+\|\Pi_{h}\mathbf{E}-\mathbf{E}\|_{H(\operatorname{curl},\Omega)}+\|\Pi_{h}\mathbf{B}-\mathbf{B}\|_{0,\Omega} \\ \leq C \inf_{(\tau,\mathbf{u},\mathbf{v})\in \mathbf{V}_{h}}\|\tau -\sigma\|_{2,\Omega}+\|\mathbf{u}-\mathbf{E}\|_{H(\operatorname{curl},\Omega)}+\|\mathbf{v}-\mathbf{B}\|_{0,\Omega}.
\end{split}
\end{equation}
Let $P_{h}:L^2(\Omega;\mathbb{T})\rightarrow \widehat{V}_{h}$ denote the piecewise $L^2$ projection operator.\ Its error estimate is as follows:
\begin{equation}\label{eq5.1.2}
\|\mathbf{v}- P_{h}\mathbf{v}\|_{0,\Omega} \leq Ch^{k}\|\mathbf{v}\|_{k,\Omega},\qquad \forall \mathbf{v}\in H^{k}(\Omega;\mathbb{T}).
\end{equation}
Note that $E_{k}\mathbf{u}\in \Sigma_{h}$ for any $\mathbf{u}\in C^2(\Omega;\mathbb{S})$ where $E_{k}$ is defined in \eqref{eq 2.3.1}.\ The error estimate is as follows:
\begin{equation}\label{eq5.1.3}
\|\mathbf{u} - E_{k}\mathbf{u}\|_{1,\Omega}  \leq Ch^{k}\|\mathbf{u}\|_{k+1,\Omega},\quad \forall \mathbf{u}\in H^{k+1}(\Omega;\mathbb{S})\cap C^2(\Omega;\mathbb{S}).
\end{equation}
Let $\Pi_{k+2}$ be the nodal interpolation operator for the space $U_{h}$ defined in \cite{MR2474112}.\ It holds the following error estimate:
\begin{equation}\label{eq5.1.4}
\|\tau - \Pi_{k+2}\tau\|_{2,\Omega} \leq Ch^{k+1}\|\tau\|_{k+3,\Omega},\qquad \forall \tau \in H^{k+3}(\Omega;\mathbb{R})\cap C^4(\Omega;\mathbb{R}).
\end{equation}
Assume that $\sigma\in H^{k+3}(\Omega;\mathbb{R})\cap C^4(\Omega;\mathbb{R}),\mathbf{E}\in H^{k+1}(\Omega;\mathbb{S})\cap C^2(\Omega;\mathbb{S})$,\ and $\mathbf{B}\in H^{k}(\Omega;\mathbb{T})$,\ let $\tau = \Pi_{k+2}\sigma,\mathbf{u} = E_{k}\mathbf{E},\mathbf{v} = P_{h}\mathbf{B}$ in (\ref{eq5.1.1}),\ by (\ref{eq5.1.2}),\ (\ref{eq5.1.3}) and (\ref{eq5.1.4}).\ A standard argument shows that
\begin{equation}\label{eq3.1.5}
\begin{split}
\|\Pi_{h}\sigma-\sigma\|_{2,\Omega}+\|\Pi_{h}\mathbf{E}-\mathbf{E}\|_{H(\operatorname{curl},\Omega)}+\|\Pi_{h}\mathbf{B}-\mathbf{B}\|_{0,\Omega} \\ \leq C h^{k}(\|\sigma\|_{k+3,\Omega} + \|\mathbf{E}\|_{k+1,\Omega} + \|\mathbf{B}\|_{k,\Omega}).
\end{split}
\end{equation}
Note that equation (\ref{eq5.2}) admits another equivalent form as follows:
\begin{equation}\label{eq5.3}
\begin{cases}
(\Pi_{h} \sigma,\tau) -(\Pi_{h}\mathbf{E},\operatorname{grad}\operatorname{grad}\tau) = (\sigma,\tau) -(\mathbf{E},\operatorname{grad}\operatorname{grad}\tau),& \forall\tau \in \Sigma_{h},\\
(\Pi_{h}\mathbf{E},\mathbf{u}) +(\operatorname{grad} \operatorname{grad} \Pi_{h}\sigma,\mathbf{u})+  (\Pi_{h}\mathbf{B},\operatorname{curl}\mathbf{u})\\ 
\qquad =(\mathbf{E},\mathbf{u}) +(\operatorname{grad} \operatorname{grad} \sigma,\mathbf{u})+  (\mathbf{B},\operatorname{curl}\mathbf{u}),& \forall \mathbf{u} \in \Sigma_{h},\\
(\Pi_{h}\mathbf{B},\mathbf{v})-(\operatorname{curl}\Pi_{h}\mathbf{E},\mathbf{v}) = (\mathbf{B},\mathbf{v}) - (\operatorname{curl}\mathbf{E},\mathbf{v}), &\forall \mathbf{v} \in\widehat{V}_{h}.
\end{cases}
\end{equation}

\subsection{The Solution of the fully discrete system and error estimates}
Suppose that $T = N\Delta t$ with a positive integer $N$.\ Let $u^{j}$ denote the function $u(t_{j})$ with $t_{j} = j\Delta t$ for $j = 0,1,\cdots,N$.\ Define
$$
\partial_{t} u^{j+\frac{1}{2}}=\frac{u^{j+1}-u^{j}}{\Delta t}, \quad \hat{u}^{j+\frac{1}{2}}=\frac{u^{j+1}+u^{j}}{2}.
$$

To discretize the time variable,\ the usual Crank-Nicolson scheme will be used.\ To this end,\ denote by $(\sigma^{j}_{h},\mathbf{E}^{j}_{h},\mathbf{B}^{j}_{h})\in \mathbf{V}_{h}$ the approximation of solution $(\sigma,\mathbf{E},\mathbf{B})$ of (\ref{eq5.1}) at $t_{j}$.\ Given the initial data $(\sigma^{0}_{h},\mathbf{E}^{0}_{h},\mathbf{B}^{0}_{h})\in \mathbf{V}_{h}$,\ for $ 0\leq j \leq N -1$,\ the approximation  $(\sigma^{j+1}_{h},\mathbf{E}^{j+1}_{h},\mathbf{B}^{j+1}_{h})$ at $t_{j+1}$ is defined by
\begin{equation}\label{eq5.4}
\begin{cases} (\partial_{t} \sigma^{j+\frac{1}{2}}_{h},\tau) = (\hat{\mathbf{E}}^{j+\frac{1}{2}}_{h},\operatorname{grad}\operatorname{grad}\tau),& \forall\tau\in U_{h}, \\ (\partial_{t} \mathbf{E}^{j+\frac{1}{2}}_{h},u) =-(\operatorname{grad} \operatorname{grad}\hat{\sigma}^{j+\frac{1}{2}}_{h} ,\mathbf{u})-  (\hat{\mathbf{B}}^{j+\frac{1}{2}}_{h},\operatorname{curl}\mathbf{u}),& \forall \mathbf{u} \in \Sigma_{h}, \\ (\partial_{t} \mathbf{B}^{j+\frac{1}{2}}_{h},\mathbf{v})  =(\operatorname{curl} \hat{\mathbf{E}}^{j+\frac{1}{2}}_{h},\mathbf{v}),& \forall \mathbf{v} \in \widehat{V}_{h}. \end{cases}
\end{equation}
The system of $(\sigma^{j+1}_{h},\mathbf{E}^{j+1}_{h},\mathbf{B}^{j+1}_{h})$ can be written as:
\begin{equation*}
\begin{cases}
(\sigma_{h}^{j+1},\tau) -\frac{\Delta t}{2}(\mathbf{E}_{h}^{j+1},\operatorname{grad}\operatorname{grad}\tau) = (\sigma_{h}^{j},\tau) +\frac{\Delta t}{2}(\mathbf{E}_{h}^{j},\operatorname{grad}\operatorname{grad}\tau),& \forall\tau \in U_{h},\\
(\mathbf{E}_{h}^{j+1},\mathbf{u}) +\frac{\Delta t}{2}(\operatorname{grad} \operatorname{grad} \sigma_{h}^{j+1},\mathbf{u})+  \frac{\Delta t}{2}(\mathbf{B}_{h}^{j+1},\operatorname{curl}\mathbf{u}) =\\ \qquad (\mathbf{E}_{h}^{j},\mathbf{u}) -\frac{\Delta t}{2}(\operatorname{grad} \operatorname{grad} \sigma_{h}^{j},\mathbf{u})-  \frac{\Delta t}{2}(\mathbf{B}_{h}^{j},\operatorname{curl}\mathbf{u}),& \forall \mathbf{u} \in \Sigma_{h},\\
(\mathbf{B}_{h}^{j+1},\mathbf{v})-\frac{\Delta t}{2}(\operatorname{curl}\mathbf{E}_{h}^{j+1},\mathbf{v}) = (\mathbf{B}_{h}^{j},\mathbf{v})+\frac{\Delta t}{2}(\operatorname{curl}\mathbf{E}_{h}^{j},\mathbf{v}), & \forall \mathbf{v} \in \widehat{V}_{h}.
\end{cases}
\end{equation*}
To show the system is nonsingular,\ consider the homogeneous system.\ It will be shown that $\sigma^{j+1}_{h},\mathbf{E}^{j+1}_{h}$ and $\mathbf{B}^{j+1}_{h}$ must vanish.\ Let $\tau = \sigma^{j+1}_{h},\mathbf{u} = \mathbf{E}^{j+1}_{h},\mathbf{v} = \mathbf{B}^{j+1}_{h}$ in the above system and add them together,.\ This leads to $(\sigma^{j+1}_{h},\sigma^{j+1}_{h}) + (\mathbf{E}^{j+1}_{h},\mathbf{E}^{j+1}_{h})+(\mathbf{B}^{j+1}_{h},\mathbf{B}^{j+1}_{h}) =0$.\ It implies that  $\sigma^{j+1}_{h}=0,\mathbf{E}^{j+1}_{h}=0$ and $\mathbf{B}^{j+1}_{h}=0$.

The error estimates are stated in the following theorem.
\begin{theorem}
	Let $(\sigma,\mathbf{E},\mathbf{B})$ and $(\sigma^{j}_{h},\mathbf{E}^{j}_{h},\mathbf{B}^{j}_{h})$ be the solutions of (\ref{eq5.1}) and (\ref{eq5.4}),\ respectively,\ let the initial data $(\sigma^{0}_{h},\mathbf{E}^{0}_{h},\mathbf{B}^{0}_{h}) = \Pi_{h}(\sigma(0),\mathbf{E}(0),\mathbf{B}(0))$.\ Assume that
	\begin{equation*}
	\begin{aligned}
	\sigma& \in W^{1,1}([0,T],H^{k+3}(\Omega;\mathbb{R})\cap C^4(\Omega;\mathbb{R})) \cap W^{3,1}([0,T],L^2(\Omega;\mathbb{R})) \cap L^{\infty}([0,T],H^{k+3}(\Omega;\mathbb{R})),\\
	\mathbf{E}&\in W^{1,1}([0,T],H^{k+1}(\Omega;\mathbb{S})\cap C^2(\Omega;\mathbb{S})) \cap W^{3,1}([0,T],L^2(\Omega;\mathbb{S})) \cap L^{\infty}([0,T],H^{k+1}(\Omega;\mathbb{S})),\\
	\mathbf{B}&\in W^{1,1}([0,T],H^{k}(\Omega;\mathbb{T})) \cap W^{3,1}([0,T],L^2(\Omega;\mathbb{T})) \cap L^{\infty}([0,T],H^{k}(\Omega;\mathbb{T})),
	\end{aligned}
	\end{equation*}
	it holds that,\ for $1 \leq j \leq N$,
	\begin{equation*}
	\begin{aligned}
	\|\sigma^{j} - \sigma_{h}^{j}\|_{0,\Omega} + \|\mathbf{E}^{j} - &\mathbf{E}_{h}^{j}\|_{0,\Omega} + \|\mathbf{B}^{j} - \mathbf{B}_{h}^{j}\|_{0,\Omega} \leq C (h^{k} + \Delta t^2)(\|\sigma\|_{W^{1,1}(H^{k+3}) \cap W^{1,3}(L^2) \cap L^{\infty}(H^{k+3})}\\
	&+\|\mathbf{E}\|_{W^{1,1}(H^{k+1}) \cap W^{1,3}(L^2) \cap L^{\infty}(H^{k+1})} + \|\mathbf{B}\|_{W^{1,1}(H^{k}) \cap W^{1,3}(L^2) \cap L^{\infty}(H^{k})}),
	\end{aligned}
	\end{equation*}
	with constant $C>0$ independent of $h$ and $\Delta t$. 
\end{theorem}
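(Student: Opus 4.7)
The plan is to run the standard energy argument built around the elliptic projection $\Pi_h$ defined by \eqref{eq5.2}. First I would split the error as
\begin{equation*}
\sigma^j - \sigma_h^j = (\sigma^j - \Pi_h\sigma^j) + (\Pi_h\sigma^j - \sigma_h^j) =: \rho_\sigma^j + \theta_\sigma^j,
\end{equation*}
and analogously for $\mathbf{E}$ and $\mathbf{B}$. The projection pieces $\rho$ are purely spatial and are already controlled by \eqref{eq3.1.5} (with an $L^\infty$-in-$t$ supremum, which is precisely why an $L^\infty$ norm appears in the hypotheses), so the real task is to bound the discrete components $\theta_\sigma^j,\theta_\mathbf{E}^j,\theta_\mathbf{B}^j$ in $L^2$.

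Next I would derive error equations for the $\theta$ components. Evaluating the continuous system \eqref{eq5.1} at $t=t_{j+\frac12}$, subtracting the Crank--Nicolson scheme \eqref{eq5.4}, and using the consistency property \eqref{eq5.3} of $\Pi_h$ (together with the fact that $\partial_t$ commutes with $\Pi_h$, since \eqref{eq5.2} is a linear, time-independent problem) to reabsorb the $\rho$-contributions from the $\operatorname{grad}\operatorname{grad}$ and $\operatorname{curl}$ couplings, one is led to identities of the form
\begin{equation*}
\begin{aligned}
(\partial_t\theta_\sigma^{j+\frac12},\tau) &= (\hat\theta_\mathbf{E}^{j+\frac12},\operatorname{grad}\operatorname{grad}\tau) + (r_\sigma^{j+\frac12},\tau), \\
(\partial_t\theta_\mathbf{E}^{j+\frac12},\mathbf{u}) &= -(\operatorname{grad}\operatorname{grad}\hat\theta_\sigma^{j+\frac12},\mathbf{u}) - (\hat\theta_\mathbf{B}^{j+\frac12},\operatorname{curl}\mathbf{u}) + (r_\mathbf{E}^{j+\frac12},\mathbf{u}), \\
(\partial_t\theta_\mathbf{B}^{j+\frac12},\mathbf{v}) &= (\operatorname{curl}\hat\theta_\mathbf{E}^{j+\frac12},\mathbf{v}) + (r_\mathbf{B}^{j+\frac12},\mathbf{v}),
\end{aligned}
\end{equation*}
valid for all discrete test functions, where each $r^{j+\frac12}$ collects the Crank--Nicolson truncation errors $\dot u(t_{j+\frac12})-\partial_t u^{j+\frac12}$ and $u(t_{j+\frac12})-\hat u^{j+\frac12}$ together with terms of type $\partial_t\rho^{j+\frac12}$.

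The energy step is to take $\tau=\hat\theta_\sigma^{j+\frac12}$, $\mathbf{u}=\hat\theta_\mathbf{E}^{j+\frac12}$, $\mathbf{v}=\hat\theta_\mathbf{B}^{j+\frac12}$ and add the three identities. The skew-symmetric couplings cancel exactly, and the left-hand side telescopes to $\tfrac{1}{2\Delta t}(\|\theta^{j+1}\|_{0,\Omega}^2-\|\theta^j\|_{0,\Omega}^2)$, with $\|\theta^j\|_{0,\Omega}^2:=\|\theta_\sigma^j\|_{0,\Omega}^2+\|\theta_\mathbf{E}^j\|_{0,\Omega}^2+\|\theta_\mathbf{B}^j\|_{0,\Omega}^2$. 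Summing from $j=0$ to $n-1$, applying Cauchy--Schwarz, and invoking the hypothesis $(\sigma_h^0,\mathbf{E}_h^0,\mathbf{B}_h^0)=\Pi_h(\sigma(0),\mathbf{E}(0),\mathbf{B}(0))$ (which forces $\theta^0=0$), a discrete Gronwall argument yields
\begin{equation*}
\max_{1\leq n\leq N}\|\theta^n\|_{0,\Omega}\leq C\sum_{j=0}^{N-1}\Delta t\,\|r^{j+\frac12}\|_{0,\Omega}.
\end{equation*}

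The main obstacle is bounding the cumulative residual on the right with the correct powers of $h$ and $\Delta t$. For the Crank--Nicolson truncation piece, a second-order Taylor expansion of $\dot u$ and $u$ at $t_{j+\frac12}$ gives $\Delta t\,\|r^{j+\frac12}\|_{0,\Omega}\lesssim\Delta t^2\int_{t_j}^{t_{j+1}}(\|\dddot\sigma\|_{0,\Omega}+\|\dddot{\mathbf{E}}\|_{0,\Omega}+\|\dddot{\mathbf{B}}\|_{0,\Omega})\,dt$, which on summation produces the $\Delta t^2\|\cdot\|_{W^{3,1}(L^2)}$ contribution. For the $\partial_t\rho$ piece, commuting $\partial_t$ past $\Pi_h$ and applying \eqref{eq3.1.5} to the time-differentiated solutions produces the $h^k\|\cdot\|_{W^{1,1}(\cdot)}$ contribution. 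Finally the triangle inequality $\|u^j-u_h^j\|_{0,\Omega}\leq\|\rho^j\|_{0,\Omega}+\|\theta^j\|_{0,\Omega}$, combined with the $L^\infty$-in-time bound on $\rho$ from \eqref{eq3.1.5}, delivers the stated estimate.
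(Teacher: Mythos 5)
Your proposal follows essentially the same route as the paper's proof: the elliptic-projection splitting via $\Pi_h$, Crank--Nicolson error equations tested with $\hat\theta^{j+\frac12}$ so that the skew-symmetric couplings cancel, telescoping from $\theta^0=0$, and Taylor/projection estimates for the accumulated residual. The one bookkeeping point to watch is that the consistency relation \eqref{eq5.3} trades the couplings $(\hat\rho_{\mathbf{E}}^{j+\frac12},\operatorname{grad}\operatorname{grad}\tau)$ and $(\hat\rho_{\mathbf{B}}^{j+\frac12},\operatorname{curl}\mathbf{u})$ for zeroth-order residuals $(\hat\rho_{\sigma}^{j+\frac12},\tau)$, $(\hat\rho_{\mathbf{E}}^{j+\frac12},\mathbf{u})$, $(\hat\rho_{\mathbf{B}}^{j+\frac12},\mathbf{v})$ coming from the mass terms in the bilinear form $A$; these must be kept in $r^{j+\frac12}$ and, after summation in $j$, they (and not only the final triangle inequality) are what produce the $L^{\infty}$-in-time norms in the estimate --- also, no discrete Gronwall inequality is actually needed, since the residual does not involve $\theta$ and Cauchy--Schwarz alone gives $\|\theta^{j+1}\|_{0,\Omega}-\|\theta^{j}\|_{0,\Omega}\leq C\Delta t\,\|r^{j+\frac12}\|_{0,\Omega}$.
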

\begin{proof}
	First,\ there exists the following decomposition of the errors:
	$$
	\begin{array}{l}{e_{\sigma}^{j} :=\sigma^{j}_{h}-\sigma^{j}=\left(\sigma^{j}_{h}-\Pi_{h}\sigma^{j}\right)+\left(\Pi_{h}\sigma^{j}-\sigma^{j}\right)=: \theta_{\sigma}^{j}+p_{\sigma}^{j}}, \\ {e_{\mathbf{E}}^{j} :=\mathbf{E}^{j}_{h}-\mathbf{E}^{j}=\left(\mathbf{E}^{j}_{h}-\Pi_{h}\mathbf{E}^{j}\right)+\left(\Pi_{h}\mathbf{E}^{j}-\mathbf{E}^{j}\right)=: \theta_{\mathbf{E}}^{j}+p_{\mathbf{E}}^{j}}, \\ {e_{\mathbf{B}}^{j} :=\mathbf{B}^{j}_{h}-\mathbf{B}^{j}=\left(\mathbf{B}^{j}_{h}-\Pi_{h}\mathbf{B}^{j}\right)+\left(\Pi_{h}\mathbf{B}^{j}-\mathbf{B}^{j}\right)=: \theta_{\mathbf{B}}^{j}+p_{\mathbf{B}}^{j}}.\end{array}
	$$
	The error estimate for the projection errors $(p_{\sigma},p_{\mathbf{E}},p_{\mathbf{B}})$ are already given in \eqref{eq3.1.5}.\ It remains to analyze the priori estimates for the errors  $(\theta_{\sigma}^{j},\theta_{\mathbf{E}}^{j},\theta_{\mathbf{B}}^{j})$.
	
	Setting $t=t_{j}$ and $t=t_{j+1}$ in (\ref{eq5.1}) and taking the arithmetic mean yields 
	\begin{equation}\label{eq5.5}
	\begin{cases} (\hat{\dot{\sigma}}^{j+\frac{1}{2}},\tau)= (\hat{\mathbf{E}}^{j+\frac{1}{2}},\operatorname{grad}\operatorname{grad}\tau),& \forall\tau\in U_{h}, \\ (\hat{\dot{\mathbf{E}}}^{j+\frac{1}{2}},\mathbf{u}) =-(\operatorname{grad} \operatorname{grad}\hat{\sigma}^{j+\frac{1}{2}} ,\mathbf{u})-  (\hat{\mathbf{B}}^{j+\frac{1}{2}},\operatorname{curl}\mathbf{u}),& \forall \mathbf{u} \in \Sigma_{h}, \\ (\hat{\dot{\mathbf{B}}}^{j+\frac{1}{2}},\mathbf{v})  =(\operatorname{curl} \hat{\mathbf{E}}^{j+\frac{1}{2}},\mathbf{v}),& \forall \mathbf{v} \in \widehat{V}_{h}. \end{cases}
	\end{equation}
	Substracting (\ref{eq5.4}) from (\ref{eq5.5}) shows that
	\begin{equation}\label{eq5.6}
	\begin{cases} (\partial_{t}e_{\sigma}^{j+\frac{1}{2}},\tau) +(\partial_{t} \sigma^{j+\frac{1}{2}}-\hat{\dot{\sigma}}^{j+\frac{1}{2}},\tau)= (\hat{e}^{j+\frac{1}{2}}_{\mathbf{E}},\operatorname{grad}\operatorname{grad}\tau),& \forall\tau\in U_{h} ,\\ (\partial_{t}e_{\mathbf{E}}^{j+\frac{1}{2}},\mathbf{u}) +(\partial_{t} \mathbf{E}^{j+\frac{1}{2}}-\hat{\dot{\mathbf{E}}}^{j+\frac{1}{2}},\mathbf{u})=\\ \qquad -(\operatorname{grad} \operatorname{grad}\hat{e}^{j+\frac{1}{2}}_{\sigma} ,\mathbf{u})-  (\hat{e}^{j+\frac{1}{2}}_{\mathbf{B}},\operatorname{curl}\mathbf{u}),&\forall \mathbf{u} \in \Sigma_{h} ,\\ (\partial_{t}e_{\mathbf{B}}^{j+\frac{1}{2}},\mathbf{v}) +(\partial_{t} \mathbf{B}^{j+\frac{1}{2}}-\hat{\dot{\mathbf{B}}}^{j+\frac{1}{2}},\mathbf{v})  =(\operatorname{curl} \hat{e}^{j+\frac{1}{2}}_{\mathbf{E}},\mathbf{v})\,& \forall \mathbf{v} \in \widehat{V}_{h}. \end{cases}
	\end{equation}
	It follows from system (\ref{eq5.3}) that 
	\begin{equation*}
	\begin{cases} (\hat{p}_{\sigma}^{j+\frac{1}{2}},\tau) = (\hat{p}^{j+\frac{1}{2}}_{\mathbf{E}},\operatorname{grad}\operatorname{grad}\tau),& \forall\tau\in U_{h}, \\ (\hat{p}_{\mathbf{E}}^{j+\frac{1}{2}},\mathbf{u}) =-(\operatorname{grad} \operatorname{grad}\hat{p}^{j+\frac{1}{2}}_{\sigma} ,\mathbf{u})-  (\hat{p}^{j+\frac{1}{2}}_{\mathbf{B}},\operatorname{curl}\mathbf{u}),& \forall \mathbf{u} \in \Sigma_{h}, \\ (\hat{p}_{\mathbf{B}}^{j+\frac{1}{2}},\mathbf{v})  =(\operatorname{curl} \hat{p}^{j+\frac{1}{2}}_{\mathbf{E}},\mathbf{v}),& \forall \mathbf{v} \in \widehat{V}_{h}. \end{cases}
	\end{equation*}
	
	Let $\tau = \hat{\theta}_{\sigma}^{j+\frac{1}{2}}, \mathbf{u} = \hat{\theta}_{\mathbf{E}}^{j+\frac{1}{2}}, \mathbf{v} = \hat{\theta}_{\mathbf{B}}^{j+\frac{1}{2}}$ in (\ref{eq5.6}).\ Adding the equations in \eqref{eq5.6} together and using the above system leads to 
	\begin{equation*}
	\begin{array}{l}
	\quad(\|\theta_{\sigma}^{j+1}\|^2_{0,\Omega} + \|\theta_{\mathbf{E}}^{j+1}\|_{0,\Omega}^2 + \|\theta_{\mathbf{B}}^{j+1}\|_{0,\Omega}^2) - (\|\theta_{\sigma}^{j}\|^2_{0,\Omega} + \|\theta_{\mathbf{E}}^{j}\|_{0,\Omega}^2 + \|\theta_{\mathbf{B}}^{j}\|_{0,\Omega}^2)\\
	=2\Delta t(-\partial_{t}p_{\sigma}^{j+\frac{1}{2}} - (\partial_{t} \sigma^{j+\frac{1}{2}}-\hat{\dot{\sigma}}^{j+\frac{1}{2}}) + \hat{p}^{j+\frac{1}{2}}_{\sigma},\hat{\theta}_{\sigma}^{j+\frac{1}{2}})\\
	+2\Delta t(-\partial_{t}p_{\mathbf{E}}^{j+\frac{1}{2}} - (\partial_{t} \mathbf{E}^{j+\frac{1}{2}}-\hat{\dot{\mathbf{E}}}^{j+\frac{1}{2}}) + \hat{p}^{j+\frac{1}{2}}_{\mathbf{E}},\hat{\theta}_{\mathbf{E}}^{j+\frac{1}{2}})\\
	+2\Delta t(-\partial_{t}p_{\mathbf{B}}^{j+\frac{1}{2}} - (\partial_{t} \mathbf{B}^{j+\frac{1}{2}}-\hat{\dot{\mathbf{B}}}^{j+\frac{1}{2}}) + \hat{p}^{j+\frac{1}{2}}_{\mathbf{B}},\hat{\theta}_{\mathbf{B}}^{j+\frac{1}{2}}),
	\end{array}
	\end{equation*}
	An application of the Cauchy-Schwarz inequality proves
	\begin{equation}\label{eq5.7}
	\begin{array}{l}
	\quad(\|\theta_{\sigma}^{j+1}\|^2_{0,\Omega} + \|\theta_{\mathbf{E}}^{j+1}\|_{0,\Omega}^2 + \|\theta_{\mathbf{B}}^{j+1}\|_{0,\Omega}^2)^{\frac{1}{2}} - (\|\theta_{\sigma}^{j}\|^2_{0,\Omega} + \|\theta_{\mathbf{E}}^{j}\|_{0,\Omega}^2 + \|\theta_{\mathbf{B}}^{j}\|_{0,\Omega}^2)^{\frac{1}{2}}\\
	\leq C\Delta t(\|\partial_{t}p_{\sigma}^{j+\frac{1}{2}}\|_{0,\Omega} + \|(\partial_{t} \sigma^{j+\frac{1}{2}}-\hat{\dot{\sigma}}^{j+\frac{1}{2}})\|_{0,\Omega} + \|\hat{p}^{j+\frac{1}{2}}_{\sigma}\|_{0,\Omega}\\
	+\|\partial_{t}p_{\mathbf{E}}^{j+\frac{1}{2}}\|_{0,\Omega} + \|(\partial_{t} \mathbf{E}^{j+\frac{1}{2}}-\hat{\dot{\mathbf{E}}}^{j+\frac{1}{2}}) \|_{0,\Omega}+ \|\hat{p}^{j+\frac{1}{2}}_{\mathbf{E}}\|_{0,\Omega}\\
	+\|\partial_{t}p_{\mathbf{B}}^{j+\frac{1}{2}}\|_{0,\Omega} + \|(\partial_{t} \mathbf{B}^{j+\frac{1}{2}}-\hat{\dot{\mathbf{B}}}^{j+\frac{1}{2}})\|_{0,\Omega} + \|\hat{p}^{j+\frac{1}{2}}_{\mathbf{B}}\|_{0,\Omega}).
	\end{array}
	\end{equation}
	Given $g\in C^{3}[0,T]$,\ the Taylor expansion of $g$ reads
	\begin{equation}\label{eq5.8}
	\Delta t \|\partial_{t}g^{j+\frac{1}{2}}\|_{0,\Omega} = \left\|\int_{t_{j}}^{t_{j+1}} \dot{g} \mathrm{d} s\right\|_{0,\Omega} \leq\int_{t_{j}}^{t_{j+1}} \|\dot{g}\|_{0,\Omega} \mathrm{d} s.
	\end{equation}
	\begin{equation}\label{eq5.9}
	\begin{split}
		\Delta t\|(\partial_{t} g^{j+\frac{1}{2}}-\hat{\dot{g}}^{j+\frac{1}{2}})\|_{0,\Omega} = \frac{1}{2}\left\|2 g^{j+1}-2 g^{j}-\Delta t \dot{g}^{j+1}-\Delta t \dot{g}^{j}\right\|_{0,\Omega}  \\ \leq C \Delta t^{2} \int_{t_{j}}^{t_{j+1}}\|\dddot{g}\|_{0,\Omega} \mathrm{d} s.
	\end{split}
	\end{equation}
	A combination of (\ref{eq5.7}), \eqref{eq5.8} and (\ref{eq5.9}) yields the following estimate:
	\begin{equation}\label{eq5.10}
	\begin{array}{l}
	\quad(\|\theta_{\sigma}^{j+1}\|^2_{0,\Omega} + \|\theta_{\mathbf{E}}^{j+1}\|_{0,\Omega}^2 + \|\theta_{\mathbf{B}}^{j+1}\|_{0,\Omega}^2)^{\frac{1}{2}} - (\|\theta_{\sigma}^{0}\|^2_{0,\Omega} + \|\theta_{\mathbf{E}}^{0}\|_{0,\Omega}^2 + \|\theta_{\mathbf{B}}^{0}\|_{0,\Omega}^2)^{\frac{1}{2}}\\
	\leq C(\int_{0}^{t_{j+1}} \|\dot{p_{\sigma}} \|_{0,\Omega} +\|\dot{p_{\mathbf{E}}} \|_{0,\Omega} + \|\dot{p_{\mathbf{B}}} \|_{0,\Omega} \mathrm{d} s\\
	+\Delta t^{2} \int_{0}^{t_{j+1}} \|\dddot{\sigma} \|_{0,\Omega} +\|\dddot{\mathbf{E}} \|_{0,\Omega} + \|\dddot{\mathbf{B}} \|_{0,\Omega} \mathrm{d} s\\
	+\Delta t\sum_{k=0}^{j+1}(\|p^{k}_{\sigma}\|_{0,\Omega} +\|p^{k}_{\mathbf{E}}\|_{0,\Omega} + \|p^{k}_{\mathbf{B}}\|_{0,\Omega} )).
	\end{array}
	\end{equation}
	Since the initial data $(\sigma^{0}_{h},\mathbf{E}^{0}_{h},\mathbf{B}^{0}_{h}) = \Pi_{h}(\sigma(0),\mathbf{E}(0),\mathbf{B}(0))$,\ it implies that $(\theta_{\sigma}^{0},\theta_{\mathbf{E}}^{0},\theta_{\mathbf{B}}^{0})$ vanish.\ By the estimates of the projection errors in (\ref{eq5.10}), this shows that
	\begin{equation}\label{eq5.11}
	\begin{array}{l}
	\quad (\|\theta_{\sigma}^{j+1}\|^2_{0,\Omega} + \|\theta_{\mathbf{E}}^{j+1}\|_{0,\Omega}^2 + \|\theta_{\mathbf{B}}^{j+1}\|_{0,\Omega}^2)^{\frac{1}{2}} \\
	\leq C(h^{k}\int_{0}^{t_{j+1}} \|\dot{\sigma} \|_{k+3,\Omega} +\|\dot{\mathbf{E}} \|_{k+1,\Omega} + \|\dot{B} \|_{k,\Omega} \mathrm{d} s\\
	+\Delta t^{2} \int_{0}^{t_{j+1}} \|\dddot{\sigma} \|_{0,\Omega} +\|\dddot{\mathbf{E}} \|_{0,\Omega} + \|\dddot{\mathbf{B}} \|_{0,\Omega} \mathrm{d} s\\
	+j\Delta t h^{k}(\|\sigma\|_{L^{\infty}(H^{k+3})} +\|\mathbf{E}\|_{L^{\infty}(H^{k+1})} + \|\mathbf{B}\|_{L^{\infty}(H^{k})})).
	\end{array}
	\end{equation}
	A combination of this and the estimate of the projection errors completes the proof.
\end{proof}
\section{Conclusion}
In this paper,\ the first family of conforming finite elements is constructed for the Gradgrad-complexes in three dimensions,\ and the exactness property is shown for the discrete complexes.\ The complexity and high polynomial degree may limit the practical significance.\ However,\ it provides insights for designing simpler methods.
\bibliographystyle{plain}
\bibliography{reference}

\end{document}